\newtheorem{theorem}{Theorem}[section]
\newtheorem{lemma}[theorem]{Lemma}
\newtheorem{proposition}[theorem]{Proposition}
\newtheorem{corollary}[theorem]{Corollary}
\newcommand{\tref}[1]{Theorem~\textup{\ref{thm:#1}}}
\newcommand{\pref}[1]{Proposition~\textup{\ref{prop:#1}}}
\newcommand{\cref}[1]{Corollary~\textup{\ref{cor:#1}}}
\newcommand{\eref}[1]{Equation~\textup{\ref{eq:#1}}}
\newcommand{\lref}[1]{Lemma~\textup{\ref{lem:#1}}}
\newcommand{\Z}{\mathbb Z}
\newcommand{\s}{\sigma}
\newcommand{\G}{\Gamma}
\newcommand{\mix}{\diamond}
\newcommand{\comix}{\boxempty}
\newcommand{\comment}[1]{}
\newcommand{\eps}{1}
\newcommand{\ol}[1]{\overline{#1}}
\newcommand{\GP}[2]{\G(#1 \mid #2)}
\newcommand{\calP}{\mathcal P}
\newcommand{\calQ}{\mathcal Q}
\newcommand{\calR}{\mathcal R}
\begin{document}

\title{Tight Chiral Polyhedra}

\author{Gabe Cunningham\\
Department of Mathematics\\
University of Massachusetts Boston\\
}

\date{ \today }
\maketitle

\begin{abstract}
	A chiral polyhedron with Schl\"afli symbol $\{p, q\}$ is called \emph{tight} if it has $2pq$ flags,
	which is the minimum possible. In this paper, we fully characterize the Schl\"afli symbols of tight chiral
	polyhedra. We also provide presentations for the automorphism groups of several families of
	tight chiral polyhedra.
	
\vskip.1in
\medskip
\noindent
Key Words: abstract polytope, abstract polyhedron, chiral polytope, chiral polyhedron, tight polytope.

\medskip
\noindent
AMS Subject Classification (2000):  20B25, 51M20, 52B05, 52B15
\comment{
	20B25: Finite automorphism groups of algebraic, geometric, or combinatorial structures.
	51M20: Polyhedra and polytopes; regular figures, division of spaces
	52B05: Combinatorial properties of polyhedra (number of faces etc.)
	52B15: Symmetry properties of polytopes
}

\end{abstract}

\section{Introduction}

	If you take a convex polyhedron and only consider the ways in which
	the vertices, edges, and faces connect to each other (disregarding
	distances and angles), then you get the \emph{face-lattice}
	of that polyhedron. An \emph{abstract polyhedron} is essentially
	a partially-ordered set that resembles the face lattice of
	a convex polyhedron in certain key ways. Indeed, the face lattice of any convex
	polyhedron, plane tessellation, or face-to-face tiling of the torus is an abstract
	polyhedron.
	
	Central to the study of polyhedra (abstract or otherwise) 
	is the characterization of their symmetry.
	The symmetries of abstract polyhedra are order-preserving
	bijections, called \emph{automorphisms}; in other words, an automorphism is
	a way of shuffling the vertex, edge, and face labels without
	changing the incidence relationship. Automorphisms therefore also
	permute the \emph{flags} of the polyhedron, which consist
	of a vertex, an edge that is incident on that vertex, and a face 
	that is incident on that edge. The most symmetric polyhedra
	are \emph{regular}, where given any two flags, there is
	an automorphism that brings the first flag to the second.
	A polyhedron is \emph{chiral} if we can divide its flags
	into two classes such that flags that differ in a single
	element lie in different classes, and such that there is
	an automorphism bringing one flag to another if and only if
	those two flags lie in the same class.

	It follows as well that the faces of a regular or chiral polyhedron are all
	isomorphic, and the vertices all have the same valence.
	We say that a polyhedron has \emph{Schl\"afli symbol} $\{p, q\}$
	(or is \emph{of type $\{p, q\}$}) if the faces are all
	$p$-gons and the vertices are all $q$-valent. 

	What are the smallest chiral polyhedra? Any polyhedron
	of type $\{p, q\}$ has at least $2pq$ flags, and if
	the number of flags is exactly $2pq$ then the polyhedron is called
	\emph{tight} (see \cite{tight-polytopes}). In \cite{tight2} and
	\cite{tight3}, the authors 
	determined which Schl\"afli symbols occur among tight regular
	polyhedra and provided presentations for their automorphism
	groups.
	This paper builds on that work to determine the
	Schl\"afli symbols that occur among tight chiral polyhedra.

	In Section 2, we provide further background on abstract
	polyhedra, chirality, and tightness. Then in Section 3,
	we describe several infinite families of tight chiral
	polyhedra. Section 4 investigates the covering relations
	between tight chiral polyhedra and characterizes the tight
	chiral polyhedra that do not cover any other tight chiral
	polyhedra. Section 5 brings everything together to produce
	our main result, \tref{all-tight-chirals}, which fully characterizes the
	Schl\"afli symbols of tight chiral polyhedra. Then in Section 6, 
	we discuss future directions.
	
\section{Background}

	Our background on abstract polyhedra comes from \cite{arp} and \cite{chiral}.
	
	Let $\calP$ be a ranked partially-ordered set with elements of rank $0$, called \emph{vertices},
	elements of rank $1$, called \emph{edges}, and elements of rank $2$, called \emph{faces}.
	Let us say that two elements $F$ and $G$ are \emph{incident} if $F \leq G$ or $G \leq F$.
	A \emph{flag} of $\calP$ consists of a vertex, edge, and face that are all mutually incident.
	Then $\calP$ is an \emph{abstract polyhedron} if all of the following are true:
	
	\begin{enumerate}
	\item[(1)] Every edge is incident to exactly two vertices and two faces.
	\item[(2)] Whenever a vertex is incident to a face, there are exactly two edges that are incident to both.
	\item[(3)] The graph of the incidence relation is connected.
	\item[(4)] For any face or vertex $F$, the subgraph of the graph of the incidence relation induced by the
		neighbors of $F$ (not including $F$ itself) is connected.
	\end{enumerate}

	From now on, we will refer to abstract polyhedra simply as ``polyhedra''.
	
	Given any flag $\Phi$ of a polyhedron $\calP$ and $i \in \{0, 1, 2\}$,
	there is a unique flag $\Phi^i$ that differs from $\Phi$ only in its element of rank $i$.
	Two flags that differ in only a single element are said to be \emph{adjacent}.
	
	Whenever a face of a polyhedron is incident to $p$ edges, it must also be
	incident to $p$ vertices. These edges and vertices occur in a single cycle, and we say
	that the face is a \emph{$p$-gon}. Similarly, whenever a vertex is incident to $q$ edges, then it
	is also incident to $q$ faces, occurring in a single cycle. In this case we say that
	the \emph{vertex-figure} is a $q$-gon. If $\calP$ is a polyhedron whose
	faces are all $p$-gons and whose vertex-figures are all $q$-gons, then
	we say that $\calP$ has \emph{Schl\"afli symbol $\{p, q\}$}, or that it is of \emph{type $\{p, q\}$}.
	
	The \emph{dual} of a polyhedron $\calP$
	is the polyhedron obtained by reversing the partial order of $\calP$. 
	If $\calP$ has Schl\"afli symbol $\{p, q\}$, then its dual has Schl\"afli symbol $\{q, p\}$.

	\subsection{Regular and chiral polyhedra}
	
		An \emph{automorphism} of a polyhedron $\calP$ is an incidence-preserving bijection
		from $\calP$ to itself. The group of automorphisms of $\calP$ is denoted by $\G(\calP)$.
		We say that $\calP$ is \emph{regular} if $\G(\calP)$ acts transitively on the flags of $\calP$,
		We say that $\calP$ is \emph{chiral} if there are two classes of flags such that
		$\G(\calP)$ acts transitively on each class but not on the whole set of flags, 
		and such that if $\Phi$ is in
		one class, then all flags adjacent to $\Phi$ lie in the other class.

		Let $\calP$ be a regular or chiral polyhedron, and let us fix a \emph{base flag}
		$\Phi$. The \emph{rotation group} of $\calP$, denoted $\G^+(\calP)$, is the
		group generated by automorphisms $\s_1$ and $\s_2$, where $\s_1$ is the unique automorphism
		that sends $\Phi$ to $(\Phi^1)^0$ and $\s_2$ is the unique automorphism
		that sends $\Phi$ to $(\Phi^2)^1$. If $\calP$ is regular, then $\G^+(\calP)$
		either coincides with $\G(\calP)$ or has index $2$ in $\G(\calP)$; if the latter
		is true then we say that $\calP$ is \emph{orientably regular}. 
		If $\calP$ is chiral, then $\G^+(\calP)$ coincides with the full automorphism
		group $\G(\calP)$. 
		
		If $\calP$ is a regular or chiral polyhedron of type $\{p, q\}$, then
		$\G^+(\calP)$ satisfies at least the relations $\s_1^p = \s_2^q = (\s_1 \s_2)^2 = \eps$.
		Conversely, given any group $\G = \langle \s_1, \s_2 \rangle$ that satisfies
		those relations, there is a standard way to build a poset $\calP$ such that
		$\G^+(\calP) = \G$, and $\calP$
		will be a polyhedron if $\langle \s_1 \rangle \cap \langle \s_2 \rangle$ is trivial
		(see \cite{chiral} for details on the construction).
		In this case, $\calP$ will be orientably regular if $\G$ has an automorphism
		that sends each $\s_i$ to $\s_i^{-1}$, and chiral otherwise.
		In any case, we define the \emph{enantiomorphic form of $\G$} to be the
		image of $\G$ under the isomorphism that sends each $\s_i$ to $\s_i^{-1}$.
		Equivalently, we get a presentation for the enantiomorphic form of $\G$ by taking a
		presentation for $\G$ and changing every $\s_i$ in the defining relations to $\s_i^{-1}$.
		If $\G$ is the rotation group of an orientably regular polyhedron, then 
		the original defining relations still hold.
		Otherwise, if $\G$ is the automorphism group of a chiral polyhedron $\calP$,
		then we obtain a presentation where the original defining relations do not hold. 
		This gives us the automorphism group of the \emph{enantiomorphic form of $\calP$},
		which has the same underlying polyhedron as $\calP$, but with a different choice
		of base flag.

		If $\G = \langle \s_1, \s_2 \rangle$ and $\Lambda = \langle \lambda_1, \lambda_2 \rangle$,
		we say that $\G$ \emph{covers} $\Lambda$ if there is a well-defined homomorphism
		sending $\s_1$ to $\lambda_1$ and $\s_2$ to $\lambda_2$. Such a homomorphism must
		be surjective, justifying our terminology. Indeed, such a homomorphism exists
		exactly when $\Lambda$ satisfies all of the defining relations
		of $\G$ (with each $\s_i$ changed to $\lambda_i$). If $\G^+(\calP)$ covers
		$\G^+(\calQ)$ for orientably regular or chiral polyhedra $\calP$ and
		$\calQ$, then we also say that \emph{$\calP$ covers $\calQ$}.
	
		A chiral polyhedron $\calP$ of type $\{p, q\}$ has a unique \emph{minimal regular cover} $\calR$,
		also of type $\{p, q\}$ \cite{mix-ch}. Any regular polyhedron $\calQ$ that covers
		$\calP$ must also cover $\calR$.
		
		Suppose $\calP$ is an orientably regular or chiral polyhedron with $\G^+(\calP) = \langle \s_1, \s_2 \rangle$,
		and let $\calP^{\delta}$ be the dual of $\calP$. Then $\G^+(\calP^{\delta})$ has distinguished
		generators $\ol{\s_1}$ and $\ol{\s_2}$, where $\ol{\s_1} = \s_2^{-1}$ and $\ol{\s_2} = \s_1^{-1}$. In other words, if
		we have a presentation for $\G^+(\calP)$, we can obtain a presentation for $\G^+(\calP^{\delta})$
		by changing every $\s_i$ to $\s_{3-i}^{-1}$ in every defining relation.

	\subsection{Mixing polyhedra}
	
		Information in this section is taken chiefly from \cite{mix-ch}.
		Given two chiral or orientably regular polyhedra $\calP_1$ and $\calP_2$, there is a natural
		way to find the minimal common cover of their rotation groups $\G^+(\calP_1)$ and
		$\G^+(\calP_2)$. If $\G^+(\calP_1) = \langle \s_1, \s_2 \rangle$ and $\G^+(\calP_2) = \langle
		\s_1', \s_2' \rangle$, then we define the \emph{mix} of $\G^+(\calP_1)$ and $\G^+(\calP_2)$,
		denoted $\G^+(\calP_1) \mix \G^+(\calP_2)$,
		to be the subgroup of $\G^+(\calP_1) \times \G^+(\calP_2)$ generated by the diagonal elements
		$\alpha_1 := (\s_1, \s_1')$ and $\alpha_2 := (\s_2, \s_2')$. Clearly,
		the order of each $\alpha_i$ is the least common multiple of the orders of $\s_i$ and $\s_i'$.
		We also define the mix of the polyhedra $\calP_1$ and $\calP_2$, denoted $\calP_1 \mix \calP_2$,
		to be the poset built from the group $\G^+(\calP_1) \mix \G^+(\calP_2)$ (using the construction
		in \cite{chiral}). Indeed, by \cite[Cor 3.8]{mix-ch}, the mix must itself be a polyhedron, yielding the following:
		
		\begin{proposition}
		\label{prop:polytopal-mix}
		Let $\calP_1$ be a chiral or orientably regular polyhedron of type $\{p_1, q_1\}$,
		and let $\calP_2$ be a chiral or orientably regular polyhedron of type $\{p_2, q_2\}$.
		Let $p$ be the least common multiple of $p_1$ and $p_2$, and let $q$ be the least common
		multiple of $q_1$ and $q_2$. Then $\calP_1 \mix \calP_2$ is a chiral or orientably regular
		polyhedron of type $\{p, q\}$.
		\end{proposition}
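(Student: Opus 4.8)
The plan is to check that $\G := \G^+(\calP_1) \mix \G^+(\calP_2)$, together with its distinguished generators $\alpha_1 = (\s_1,\s_1')$ and $\alpha_2 = (\s_2,\s_2')$, has exactly the form of the rotation group of a chiral or orientably regular polyhedron of type $\{p,q\}$, and then to appeal to the cited polytopality of the mix.

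First I would record the group-theoretic data. Since $\calP_i$ has type $\{p_i,q_i\}$, its rotation group satisfies the relations $\s_1^{p_i} = \s_2^{q_i} = (\s_1\s_2)^2 = \eps$ (and likewise for $\calP_2$ with the primed generators), so a coordinatewise computation in $\G^+(\calP_1)\times\G^+(\calP_2)$ gives $(\alpha_1\alpha_2)^2 = \eps$. Moreover, as already observed above, the order of $\alpha_i$ is the least common multiple of the orders of $\s_i$ and $\s_i'$; hence $\alpha_1$ has order exactly $\lcm(p_1,p_2) = p$ and $\alpha_2$ has order exactly $\lcm(q_1,q_2) = q$. Thus $\G = \langle \alpha_1, \alpha_2\rangle$ is generated by two elements of orders $p$ and $q$ with $(\alpha_1\alpha_2)^2 = \eps$.

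Next, by \cite[Cor 3.8]{mix-ch}, the poset $\calP_1 \mix \calP_2$ obtained from $\G$ via the construction of \cite{chiral} is a polyhedron; in particular $\langle\alpha_1\rangle \cap \langle\alpha_2\rangle$ is trivial, and since the poset is built from a rotation group it is automatically orientably regular or chiral (according to whether $\G$ admits the automorphism inverting both $\alpha_i$). It remains to pin down the Schl\"afli symbol. In the construction, the base face is the coset $\langle\alpha_1\rangle$ and the vertices incident to it are the cosets $\alpha_1^k\langle\alpha_2\rangle$; triviality of $\langle\alpha_1\rangle\cap\langle\alpha_2\rangle$ forces these to be pairwise distinct for $k = 0, 1, \dots, |\alpha_1|-1$, so each face is a $p$-gon, and the symmetric argument with the roles of $\alpha_1$ and $\alpha_2$ interchanged shows each vertex-figure is a $q$-gon. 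Hence $\calP_1 \mix \calP_2$ is a chiral or orientably regular polyhedron of type $\{p,q\}$.

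Given \cite[Cor 3.8]{mix-ch}, there is no real obstacle here: the proof is essentially bookkeeping. The one point that genuinely needs the facts recorded above rather than a triviality is that the orders of $\alpha_1$ and $\alpha_2$ are exactly $p$ and $q$, not merely divisors of them — this is what rules out the Schl\"afli symbol collapsing to something smaller — and this is precisely the lcm-of-orders observation.
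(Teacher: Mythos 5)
Your proposal is correct and follows essentially the same route the paper takes: the paper also simply records that the order of each $\alpha_i$ is the least common multiple of the orders of $\s_i$ and $\s_i'$ (giving orders exactly $p$ and $q$) and then invokes \cite[Cor 3.8]{mix-ch} for polytopality, with the chiral-or-orientably-regular dichotomy coming for free from the construction in \cite{chiral}. Your extra bookkeeping about cosets only makes explicit what the paper leaves implicit.
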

		
		Dual to the mix is the \emph{comix} of two groups. The comix of $\G^+(\calP_1)$ and
		$\G^+(\calP_2)$, denoted $\G^+(\calP_1) \comix \G^+(\calP_2)$, is the group we obtain
		from $\G^+(\calP_1)$ by adding all of the relations of $\G^+(\calP_2)$ (with each
		$\s_i'$ changed to $\s_i$). Its primary use is in determining the size of the mix,
		as the following result (\cite[Prop. 3.3]{mix-ch}) shows.
		
		\begin{proposition}
		\label{prop:mix-size}
		${\displaystyle |\G^+(\calP_1) \mix \G^+(\calP_2)| = \frac{|\G^+(\calP_1)| \cdot |\G^+(\calP_2)|}{|\G^+(\calP_1) \comix \G^+(\calP_2)|}.}$
		\end{proposition}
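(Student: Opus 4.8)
The plan is to realise all three groups as quotients of a single free group and then reduce the stated identity to a routine piece of subgroup bookkeeping. Write $\G_i$ for $\G^+(\calP_i)$, let $F = \langle x_1, x_2 \rangle$ be the free group of rank $2$, and let $\pi_i \colon F \to \G_i$ be the homomorphism carrying $x_1, x_2$ to the distinguished generators of $\G_i$. Put $N_i = \ker \pi_i$, so that $\G_i \cong F/N_i$. Everything will follow once we have identified $\G_1 \mix \G_2$ and $\G_1 \comix \G_2$ as quotients of $F$ by, respectively, $N_1 \cap N_2$ and the product $N_1 N_2$.

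First I would identify the mix. Consider the diagonal homomorphism $\Delta \colon F \to \G_1 \times \G_2$, $w \mapsto (\pi_1(w), \pi_2(w))$. Its image is the subgroup generated by $(\s_1, \s_1')$ and $(\s_2, \s_2')$, which is exactly $\G_1 \mix \G_2$; and $\Delta(w) = (\eps, \eps)$ precisely when $w \in N_1$ and $w \in N_2$, so $\ker \Delta = N_1 \cap N_2$. Hence $\G_1 \mix \G_2 \cong F/(N_1 \cap N_2)$. Next I would identify the comix. By definition $\G_1 \comix \G_2$ is obtained from $\G_1$ by imposing every relation that holds in $\G_2$; since a word of $F$ is a relation of $\G_i$ exactly when it lies in $N_i$, and since the normal closure of a set of normal generators of $N_2$ is just $N_2$ (so the construction does not depend on a choice of presentation), this group is $F$ modulo the join of $N_1$ and $N_2$ in $F$. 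As $N_1, N_2 \trianglelefteq F$, that join is the product $N_1 N_2$, giving $\G_1 \comix \G_2 \cong F/(N_1 N_2)$.

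With these identifications the proposition becomes the cardinal identity $[F : N_1 \cap N_2]\cdot[F : N_1 N_2] = [F : N_1]\cdot[F : N_2]$. This is the standard consequence of the second isomorphism theorem: the map $n \mapsto nN_2$ gives $N_1/(N_1 \cap N_2) \cong N_1 N_2 / N_2$, so $[N_1 : N_1 \cap N_2] = [N_1 N_2 : N_2]$. Applying the tower law for indices (valid for cardinals) through the chains $N_1 \cap N_2 \le N_1 \le F$ and $N_2 \le N_1 N_2 \le F$ then yields
\[
[F : N_1 \cap N_2] = [F : N_1]\,[N_1 : N_1 \cap N_2] = [F : N_1]\,[N_1 N_2 : N_2] = \frac{[F : N_1]\,[F : N_2]}{[F : N_1 N_2]},
\]
and translating indices back into orders gives the formula. (When $\calP_1$ and $\calP_2$ are finite — the case of interest, and the one that powers \pref{polytopal-mix}-style size computations — all four quantities are finite, so the rearrangement is unambiguous.)

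I do not expect any genuine obstacle here; the only point needing care is the first identification, namely checking that $\ker \Delta$ is exactly $N_1 \cap N_2$ and not something larger, and that the comix really is independent of the chosen relators. As an alternative packaging of the same computation, one may note that the displayed count shows $|\G_1 \mix \G_2| = |\G_1|\cdot|\ker(\G_2 \to \G_1 \comix \G_2)|$, which identifies $\G_1 \mix \G_2$ with the fibre product (pullback) of the two quotient maps $\G_1 \to \G_1 \comix \G_2 \leftarrow \G_2$; but verifying that description directly reduces to the very same kernel bookkeeping.
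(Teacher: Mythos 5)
Your argument is correct, and it is essentially the standard proof: the paper itself states this proposition without proof, citing \cite[Prop.\ 3.3]{mix-ch}, and the proof given there proceeds exactly as you do, identifying $\G^+(\calP_1)\mix\G^+(\calP_2)$ with $F/(N_1\cap N_2)$ and $\G^+(\calP_1)\comix\G^+(\calP_2)$ with $F/(N_1N_2)$ for a common overgroup $F$ and then applying the second isomorphism theorem. Your kernel bookkeeping for $\Delta$, the well-definedness of the comix, and the caveat about finiteness are all handled appropriately, so there is nothing to add.
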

		
	\subsection{Tight polyhedra}
	
		See \cite{tight-polytopes, tight3} for information on tight polyhedra. We mention
		several key results here.
		A polyhedron of type $\{p, q\}$ must have at least $2pq$ flags, since it has
		at least $p$ vertices and the vertex-figures each have $2q$ flags. We call a polyhedron
		\emph{tight} if it has exactly $2pq$ flags. If $\calP$ is a tight chiral or orientably
		regular polyhedron of type $\{p, q\}$, then $|\G^+(\calP)| = pq$, half the number of flags.
		Furthermore, for any chiral or orientably regular polyhedron (not necessarily tight),
		the subset $\langle \s_1 \rangle \langle \s_2 \rangle$ of $\G^+(\calP)$ has
		$pq$ elements, because $\langle \s_1 \rangle$ and $\langle \s_2 \rangle$ have trivial intersection.
		Thus, if $\calP$ is a tight chiral or orientably regular polyhedron, it follows that
		$\G^+(\calP) = \langle \s_1 \rangle \langle \s_2 \rangle$.

		For convenience, we will say that a group
		$\langle \s_1, \s_2 \rangle$ is \emph{tight} if
		$\langle \s_1, \s_2 \rangle = \langle \s_1 \rangle \langle \s_2 \rangle$.
		If $\G$ is tight, $(\s_1 \s_2)^2 = \eps$ and $\langle \s_1 \rangle \cap \langle \s_2 \rangle = \{\eps\}$, 
		then $\G$ is the rotation group of a tight chiral or orientably regular polyhedron.
		
		Note that if $\G$ is tight, then so is any quotient of $\G$. For certain simple
		quotients, the converse is true:
		
		\begin{proposition}
		\label{prop:tight-crit}
		Let $\G = \langle \s_1, \s_2 \rangle$, and suppose that there is a normal subgroup
		$N = \langle \s_2^k \rangle$ such that $\G/N$ is tight.
		Then $\G$ is tight.
		\end{proposition}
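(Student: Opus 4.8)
The plan is to unwind the definition of tightness directly: recall that a group $\langle \s_1, \s_2 \rangle$ is tight precisely when every element can be written in the form $\s_1^a \s_2^b$, so I would simply take an arbitrary $g \in \G$ and manufacture such an expression, using the hypothesis on $\G/N$ together with the crucial fact that $N$ lies inside $\langle \s_2 \rangle$. First I would pass to the quotient. Since $N$ is normal, $\G/N$ is a genuine group, generated by the images $\s_1 N$ and $\s_2 N$; and since $\G/N$ is tight, the coset $gN$ can be written as $\s_1^a \s_2^b N$ for suitable integers $a, b$. Lifting back, this says $g = \s_1^a \s_2^b n$ for some $n \in N$.

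The key step is then to exploit the special form of $N$. Because $N = \langle \s_2^k \rangle \subseteq \langle \s_2 \rangle$, we have $n = \s_2^{mk}$ for some integer $m$, and therefore
\[
	g = \s_1^a \s_2^b \s_2^{mk} = \s_1^a \s_2^{b + mk} \in \langle \s_1 \rangle \langle \s_2 \rangle .
\]
Since $g$ was arbitrary, $\G = \langle \s_1 \rangle \langle \s_2 \rangle$, which is exactly what it means for $\G$ to be tight. It is worth emphasizing in the write-up that the argument really uses the shape $N = \langle \s_2^k \rangle$ and not merely that $N$ is normal with tight quotient: an arbitrary normal subgroup need not be contained in $\langle \s_2 \rangle$, and then the lifted element $n$ could fail to absorb into the $\s_2$-part.

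I do not anticipate any real obstacle here; the whole proof is a one-line coset computation once the definitions of ``tight'' and of the quotient generators are spelled out. The only point requiring a moment's care is to invoke the concrete description $N = \langle \s_2^k \rangle$ at the right place, so that $n$ can be rewritten as a power of $\s_2$ and folded into the exponent $b$.
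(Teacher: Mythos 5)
Your argument is correct and is essentially identical to the paper's own proof: both lift the decomposition $\ol{w} = \ol{\s_1}^a \ol{\s_2}^b$ from the tight quotient and absorb the leftover element of $N = \langle \s_2^k \rangle$ into the power of $\s_2$. No issues.
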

		
		\begin{proof}
		Let $w \in \G$, and let $\ol{w}$ be the image of $w$ in $\G/N$. Since
		$\G/N$ is tight, we can write $\ol{w} = \ol{\s_1}^a \ol{\s_2}^b$ for some $a$ and $b$.
		It follows that $w = \s_1^a \s_2^b \s_2^{ck}$ for some $c$, and so $w \in \langle \s_1 \rangle
		\langle \s_2 \rangle$.
		\end{proof}

		\begin{proposition}
		\label{prop:inverse-quo-crit}
		Suppose $\calP$ is a tight chiral or orientably regular polyhedron of type $\{p, q\}$ with $p \geq 3$.
		Let $\G^+(\calP) = \langle \s_1, \s_2 \rangle$, and let $N = \langle \s_2^{q'} \rangle$.
		If $N$ is normal in $\G^+(\calP)$, then $\G^+(\calP) / N$ is the rotation group of
		a tight chiral or orientably regular polyhedron. 
		\end{proposition}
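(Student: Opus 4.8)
The plan is to verify that $\ol\G := \G^+(\calP)/N$ meets the three conditions recalled in Section~2 as sufficient for a two-generator group to be the rotation group of a tight chiral or orientably regular polyhedron: that $\ol\G$ is tight, that $(\ol{\s_1}\,\ol{\s_2})^2 = \eps$, and that $\langle \ol{\s_1} \rangle \cap \langle \ol{\s_2} \rangle = \{\eps\}$, where $\ol{\s_1}, \ol{\s_2}$ are the images of $\s_1, \s_2$. Since $N$ is normal, $\ol\G$ is indeed a group, generated by $\ol{\s_1}$ and $\ol{\s_2}$. The first condition is immediate: $\G^+(\calP)$ is tight because $\calP$ is tight, a quotient of a tight group is tight, and so $\ol\G = \langle \ol{\s_1} \rangle \langle \ol{\s_2} \rangle$. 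The second is immediate too, as the relation $(\s_1\s_2)^2 = \eps$ holds in $\G^+(\calP)$ and hence in every quotient.

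So the real work is the intersection condition, and the key point is that $N = \langle \s_2^{q'} \rangle$ lies inside $\langle \s_2 \rangle$. Suppose $\ol{\s_1}^{\,a} = \ol{\s_2}^{\,b}$ for some integers $a$ and $b$. Pulling this back to $\G^+(\calP)$ gives $\s_2^{-b}\s_1^{\,a} \in N \subseteq \langle \s_2 \rangle$, so $\s_1^{\,a} \in \langle \s_2 \rangle$. Since $\calP$ is a polyhedron, $\langle \s_1 \rangle \cap \langle \s_2 \rangle$ is trivial, whence $\s_1^{\,a} = \eps$ and therefore $\ol{\s_1}^{\,a} = \eps$. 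Thus $\langle \ol{\s_1} \rangle \cap \langle \ol{\s_2} \rangle = \{\eps\}$, and applying the criterion from Section~2 we conclude that $\ol\G$ is the rotation group of a tight chiral or orientably regular polyhedron.

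The hypothesis $p \geq 3$ enters to keep the quotient from degenerating: the same containment argument gives $\langle \s_1 \rangle \cap N \subseteq \langle \s_1 \rangle \cap \langle \s_2 \rangle = \{\eps\}$, so $\ol{\s_1}$ still has order exactly $p$, and hence the quotient polyhedron genuinely has $p$-gonal faces. The only real obstacle is the worry that passing to a quotient might create a new coincidence between a power of $\s_1$ and a power of $\s_2$ that was absent in $\G^+(\calP)$; the containment $N \subseteq \langle \s_2 \rangle$ is exactly what rules this out, since it forces any such coincidence to have been present --- and therefore already excluded --- upstairs. Beyond that, the proof is simply an application of the background results in Section~2.
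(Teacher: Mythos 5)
Your verification of the intersection condition is correct, and in fact a bit cleaner than the paper's: from $\ol{\s_1}^{\,a}=\ol{\s_2}^{\,b}$ you pull back to $\s_1^a\s_2^{-b}\in N\subseteq\langle\s_2\rangle$, hence $\s_1^a\in\langle\s_1\rangle\cap\langle\s_2\rangle=\{\eps\}$; the paper reaches the same conclusion by writing the element in the form $\s_1^i\s_2^{aq'}$. The tightness and $(\ol{\s_1}\ol{\s_2})^2=\eps$ observations also match the paper.

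However, there is a genuine gap, and it sits exactly where the hypothesis $p\geq 3$ is actually used. You have not ruled out the degenerate case in which $N$ is all of $\langle\s_2\rangle$ (equivalently, $\gcd(q',q)=1$), which the statement's hypotheses permit a priori. In that case $\ol{\s_2}=\eps$, the quotient is the cyclic group $\langle\ol{\s_1}\rangle$, the intersection condition holds vacuously, and yet the quotient is \emph{not} the rotation group of any polyhedron, since a polyhedron requires both entries of the Schl\"afli symbol to be at least $2$. The paper's proof devotes its first half to precisely this point: if $\langle\s_2\rangle$ itself were normal, then $\s_1\s_2\s_1^{-1}=\s_2^a$ together with $(\s_1\s_2)^2=\eps$ forces $\s_1^{-2}=\s_2^{a+1}$, hence $\s_1^2=\eps$ by the trivial-intersection property, contradicting $p\geq 3$. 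So $p\geq 3$ guarantees the effective $q'$ is at least $2$ and the quotient does not collapse. Your stated explanation of where $p\geq 3$ enters --- ensuring $\ol{\s_1}$ has order exactly $p$ --- is not the real reason: that follows from $\langle\s_1\rangle\cap N=\{\eps\}$ for any $p\geq 2$ and is not even needed for the conclusion as stated. You need to add the argument that $N\neq\langle\s_2\rangle$.
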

		
		\begin{proof}
		Let $\G^+(\calP) / N = \langle \ol{\s_1}, \ol{\s_2} \rangle$.
		First we want to show that $\G^+(\calP) / N$ is the rotation group of a chiral or orientably
		regular polyhedron. Suppose that $q' = 1$. Then $\s_1 \s_2 \s_1^{-1} = \s_2^a$ for some $a$,
		and since $(\s_1 \s_2)^2 = \eps$, it follows that $\s_2^{-1} \s_1^{-2} = \s_2^a$ and thus
		$\s_1^{-2} = \s_2^{a+1}$. Then since $\langle \s_1 \rangle$ and $\langle \s_2 \rangle$
		must have trivial intersection, we see that $\s_1^2 = \eps$.
		Thus, under the assumption that $p \geq 3$ we have $q' \geq 2$. 
		Then by \cite[Theorem 1]{chiral}, all we need to show is that $\langle \ol{\s_1} \rangle \cap
		\langle \ol{\s_2} \rangle = \{\eps\}$. Consider an element in this intersection;
		it must be $\ol{w}$ for some $w \in \G^+(\calP)$. Since $\ol{w} \in \langle \ol{\s_1} \rangle$,
		it follows that $w = \s_1^i \s_2^{aq'}$ for some integers $i$ and $a$.
		Similarly, since $\ol{w} \in \langle \ol{\s_2} \rangle$, it follows that $w = \s_2^j \s_2^{bq'}$
		for some integers $j$ and $b$. Therefore $\s_1^i = \s_2^{j+bq'-aq'}$. Since
		$\calP$ is a polyhedron, $\langle \s_1 \rangle \cap \langle \s_2 \rangle = \{\eps\}$,
		and so $i = 0$. Thus $w = \s_2^{aq'}$, and so $\ol{w} = \eps$, which is what we wanted to show.
		Finally, since $\calP$ is tight, it follows that $\G^+(\calP) = \langle \s_1 \rangle \langle \s_2 \rangle$,
		and thus $\G^+(\calP) / N = \langle \ol{\s_1} \rangle \langle \ol{\s_2} \rangle$.
		Therefore, $\G^+(\calP) / N$ is tight.
		\end{proof}
		
		\begin{proposition}
		\label{prop:tight-covers-chiral}
		Suppose that $\calP$ is a tight chiral or orientably regular polyhedron, and that
		$\calP$ covers a tight chiral polyhedron. Then $\calP$ is itself chiral.
		\end{proposition}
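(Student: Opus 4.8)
The plan is to argue by contradiction. By hypothesis $\calP$ is chiral or orientably regular, so if it is \emph{not} chiral it is orientably regular; I will derive a contradiction by showing that then the tight chiral polyhedron $\calP$ covers would itself have to be orientably regular. (In fact the argument will show slightly more: a tight orientably regular polyhedron covers no chiral polyhedron, so the tightness hypothesis on the covered polyhedron is never used.)

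So suppose $\calP$ is tight and orientably regular of type $\{p,q\}$, write $\G=\G^+(\calP)=\langle\s_1,\s_2\rangle$, and let $\calQ$, of type $\{p',q'\}$ with $\Lambda=\G^+(\calQ)=\langle\lambda_1,\lambda_2\rangle$, be the tight chiral polyhedron that $\calP$ covers. Since $\calP$ is orientably regular, $\G$ carries an automorphism $\rho$ with $\rho(\s_i)=\s_i^{-1}$ (for instance, conjugation by $\rho_1$ in the full automorphism group, or equivalently: the defining relations of $\G$ survive the substitution $\s_i\mapsto\s_i^{-1}$). The covering supplies a surjection $\pi\colon\G\to\Lambda$ with $\pi(\s_i)=\lambda_i$; put $N=\ker\pi$, so that $\Lambda\cong\G/N$ under $\lambda_i\leftrightarrow\s_iN$, and note $p'\mid p$ and $q'\mid q$ since $\pi$ is a homomorphism.

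The key step, and the place where tightness of $\calP$ enters, is an explicit description of $N$. Because $\calP$ is tight, $\G=\langle\s_1\rangle\langle\s_2\rangle$ with $|\G|=pq=|\langle\s_1\rangle|\cdot|\langle\s_2\rangle|$, so every element of $\G$ is uniquely $\s_1^a\s_2^b$ with $0\le a<p$ and $0\le b<q$. For such an element, $\pi(\s_1^a\s_2^b)=\lambda_1^a\lambda_2^b$, which is trivial exactly when $\lambda_1^a=\lambda_2^{-b}\in\langle\lambda_1\rangle\cap\langle\lambda_2\rangle=\{\eps\}$ (the intersection is trivial since $\calQ$ is a polyhedron), i.e.\ exactly when $p'\mid a$ and $q'\mid b$. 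Hence, \emph{as a set}, $N=\langle\s_1^{p'}\rangle\langle\s_2^{q'}\rangle$.

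The rest is quick: $\rho$ maps $\langle\s_1^{p'}\rangle$ onto itself and $\langle\s_2^{q'}\rangle$ onto itself (each is closed under inversion) and is multiplicative, so $\rho(N)=\rho(\langle\s_1^{p'}\rangle)\,\rho(\langle\s_2^{q'}\rangle)=\langle\s_1^{p'}\rangle\langle\s_2^{q'}\rangle=N$. Therefore $\rho$ descends to an automorphism $\overline{\rho}$ of $\G/N\cong\Lambda$ with $\overline{\rho}(\lambda_i)=\lambda_i^{-1}$; by the correspondence recalled in Section~2, the existence of such an automorphism of $\G^+(\calQ)$ forces $\calQ$ to be orientably regular, contradicting its chirality. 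Hence $\calP$ is chiral. I expect the only genuine obstacle to be the identification of $N$: it is tightness of $\calP$, and nothing else, that makes $N$ a ``rectangular'' product $\langle\s_1^{p'}\rangle\langle\s_2^{q'}\rangle$ that is visibly stable under $\s_i\mapsto\s_i^{-1}$ — without tightness the kernel would contain words interleaving powers of $\s_1$ and $\s_2$ whose ``mirror images'' need not lie in $N$, which is precisely the phenomenon permitting a quotient to be chiral.
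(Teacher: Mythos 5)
Your proof is correct, but it takes a genuinely different route from the paper's. The paper argues via the minimal regular cover: if $\calP$ were regular and covered the tight chiral $\calQ$ of type $\{p,q\}$, it would also cover the minimal regular cover $\calR$ of $\calQ$, which has the same type but order strictly greater than $pq$; hence $\G^+(\calR)$, and therefore its cover $\G^+(\calP)$, cannot be tight. That argument is three lines long but leans on the nontrivial imported fact from \cite{mix-ch} that a chiral polyhedron admits a unique minimal regular cover of the same Schl\"afli type. Your argument is self-contained: you use tightness of $\calP$ to write every element uniquely as $\s_1^a\s_2^b$, identify the kernel of the covering as the set $\langle\s_1^{p'}\rangle\langle\s_2^{q'}\rangle$ (using $\langle\lambda_1\rangle\cap\langle\lambda_2\rangle=\{\eps\}$, which holds for any polyhedron), observe that this set is visibly invariant under the inverting automorphism $\rho$, and conclude that $\rho$ descends to $\G^+(\calQ)$, contradicting chirality. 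Each step checks out, including the descent criterion (the paper's Section~2 states that a rotation group admitting an automorphism $\s_i\mapsto\s_i^{-1}$ belongs to an orientably regular polyhedron). Your approach buys elementarity and a sharper explanation of \emph{why} tightness of $\calP$ is the operative hypothesis --- the kernel is forced to be a ``rectangle'' stable under inversion --- and, as you note, it never uses tightness of the covered polyhedron (though in fact your kernel computation shows that any polyhedron covered by a tight one is automatically tight, so this is not a strict strengthening). The paper's approach buys brevity at the cost of an external black box.
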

		
		\begin{proof}
		Suppose that $\calP$ is regular. Let $\calQ$ be a tight chiral polyhedron of type $\{p, q\}$ covered by $\calP$,
		and let $\calR$ be the minimal regular cover of $\calQ$. Since $\calQ$ and $\calR$ both have type $\{p, q\}$ 
		and $\calQ$ is a proper quotient of $\calR$, it follows that $|\G^+(\calR)| > pq$. So $\G^+(\calR)$ is
		not tight, and it follows that neither is $\G^+(\calP)$. So if $\G^+(\calP)$ (and therefore $\calP)$
		is tight, then $\calP$ is chiral.
		\end{proof}
		
\section{Families of tight chiral polyhedra}

	\subsection{Known tight chiral polyhedra}

	Our search for tight chiral polyhedra begins with Marston Conder's list of chiral 
	polytopes with up to 2000 flags \cite{chiral-atlas}. Every tight chiral polyhedron 
	with at most 2000 flags either has its Schl\"afli symbol or the dual of its Schl\"afli 
	symbol in Table 1. In every case where the parameter $n$ appears, its 
	upper bound is simply what is required to ensure that the polyhedron has at most 2000 flags.

	\begin{table}[h!]
	\begin{center}
	\begin{tabular}{l | l}
	$\{6, 9n\}$ for $1 \leq n \leq 18$ 	&	$\{8, 32n\}$ for $1 \leq n \leq 3$ \\
	$\{9, 18\}$							&	$\{10, 25n\}$ for $1 \leq n \leq 4$ \\
	$\{12, 18n\}$ for $1 \leq n \leq 4$	&	$\{14, 49\}$ \\
	$\{16, 32\}$						&	$\{18, 6n\}$ for $3 \leq n \leq 9$ \\
	$\{18, 9n\}$ for $2 \leq n \leq 6$	&	$\{20, 50\}$ \\
	$\{24, 32\}$						&	$\{24, 36\}$.
	\end{tabular}
	\end{center}
	\caption{Schl\"afli symbols of tight chiral polyhedra with at most 2000 flags}
	\label{schlafli}
	\end{table}
	
	There are several interesting patterns in the data. In all of the Schl\"afli symbols,
	at least one of the numbers is divisible by a nontrivial square. Closer examination 
	reveals that all of the Schl\"afli symbols are a ``multiple'' of $\{8, 32\}$ or
	of $\{2m, m^2\}$ or $\{m^2, 2m\}$ for an odd prime $m$.
	That is, every Schl\"afli symbol in Table~\ref{schlafli} can be written as $\{8r, 32s\}$ or
	$\{2rm, sm^2\}$ for an odd prime $m$, or $\{sm^2, 2rm\}$ for an odd prime $m$.
	This pattern suggests that the tight chiral polyhedra of types $\{8, 32\}$ and $\{2m, m^2\}$ 
	play a fundamental role. We will see later that this is indeed the case.

	\subsection{Rotation groups of tight chiral polyhedra}
	
	If $\calP$ is a tight chiral or orientably regular polyhedron, then every element of
	$\G^+(\calP)$ has an essentially unique representation of the form $\s_1^i \s_2^j$.
	In particular, $\s_2^{-1} \s_1$ and $\s_2 \s_1^{-1}$ both have representations
	of this form, and in many cases, knowing how to represent these
	two elements of the group is already enough to define the entire group.
	Thus, we define the group $\GP{p,q}{i_1,j_1,i_2,j_2}$ as:
	\begin{align}
	\begin{split}
	\GP{p,q}{i_1, j_1, i_2, j_2} := \langle \s_1, \s_2 \mid & \s_1^p = \s_2^q = (\s_1 \s_2)^2 = \eps, \\
	& \s_2^{-1} \s_1 = \s_1^{i_1} \s_2^{j_1}, \\
	& \s_2 \s_1^{-1} = \s_1^{i_2} \s_2^{j_2}.
	\end{split}
	\end{align}
	Note that the dual of $\GP{p,q}{i_1, j_1, i_2, j_2}$ is $\GP{q, p}{j_2, i_2, j_1, i_1}$.
	(That is, if we change the relations of the former by sending each $\s_i$ to $\s_{3-i}^{-1}$,
	we get the latter.)
	Also, the enantiomorphic form of $\GP{p,q}{i_1, j_1, i_2, j_2}$ is $\GP{p,q}{-i_2, -j_2, -i_1, -j_1}$.
	
	We start by collecting some basic facts about these groups.
	
	\begin{proposition}
	\label{prop:gp-props2}
	\begin{enumerate}
	\item In $\GP{p, q}{i, j_1, -i, j_2}$, the subgroups $\langle \s_2^{j_1-1} \rangle$ and 
		$\langle \s_2^{j_2+1} \rangle$ are identical and normal.
	\item In $\GP{p, q}{i_1, j, i_2, -j}$, the subgroups $\langle \s_1^{i_1+1} \rangle$ and 
		$\langle \s_1^{i_2-1} \rangle$ are identical and normal.
	\item The group $\GP{p, q}{-1, 1, 1, -1}$ is tight.
	\item For any $i$, the group $\GP{p, q}{i, 1, -i, -1}$ is tight.
	\item For any $i$, $j_1$, and $j_2$, the group $\GP{p, q}{i, j_1, -i, j_2}$ is tight.
	\end{enumerate}
	\end{proposition}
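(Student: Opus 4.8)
The five statements are not independent, so I would prove them in a linked way rather than separately. Parts (c) and (d) are special cases of part (e): taking $j_1=1$ and $j_2=-1$ in (e) gives (d), and then taking $i=-1$ in (d) gives (c). Parts (a) and (b) are dual to each other: since the dual of $\GP{p,q}{i_1,j_1,i_2,j_2}$ is $\GP{q,p}{j_2,i_2,j_1,i_1}$, the dual of the group $\GP{p,q}{i_1,j,i_2,-j}$ in (b) has the shape $\GP{q,p}{-j,i_2,j,i_1}$, which is of the form $\GP{q,p}{i,j_1,-i,j_2}$ considered in (a); applying (a) to it and translating back via $\ol{\s_1}=\s_2^{-1}$, $\ol{\s_2}=\s_1^{-1}$ yields (b). So it suffices to prove (a), to prove (d) directly, and to deduce (e) from (a) and (d).

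For (a), write $\G=\GP{p,q}{i,j_1,-i,j_2}$. The key point is the single identity
\[
 \s_2^{\,j_1-1} = \s_1^{-i}\,\s_2^{-(j_2+1)}\,\s_1^{\,i},
\]
which I call $(\star)$ and which follows from the two rewriting relations alone: from $\s_2^{-1}\s_1=\s_1^{i}\s_2^{j_1}$ we get $\s_2^{j_1}=\s_1^{-i}\s_2^{-1}\s_1$, from the inverse of $\s_2\s_1^{-1}=\s_1^{-i}\s_2^{j_2}$ we get $\s_1\s_2^{-1}=\s_2^{-j_2}\s_1^{i}$, and substituting both into $\s_2^{\,j_1-1}=\s_2^{j_1}\s_2^{-1}$ gives $(\star)$. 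Now $(\star)$ shows that $\s_2^{\,j_1-1}$ and $\s_2^{\,j_2+1}$ are conjugate in $\G$, hence have the same order; since both lie in the cyclic group $\langle\s_2\rangle$, they must generate the same (cyclic) subgroup, which we call $H$. To see that $H$ is normal it is enough to show $\s_1 H\s_1^{-1}\subseteq H$, because $\langle\s_2\rangle$ is abelian so $\s_2$ normalizes $H$, and $H$ is finite so the containment is then an equality, whence $H\trianglelefteq\G$. Using $\s_1=\s_2\s_1^{i}\s_2^{j_1}$ one computes $\s_1\s_2^{\,j_1-1}\s_1^{-1}=\s_2\,(\s_1^{i}\s_2^{\,j_1-1}\s_1^{-i})\,\s_2^{-1}$, and the bracketed factor is $\s_2^{-(j_2+1)}$ by $(\star)$, so $\s_1\s_2^{\,j_1-1}\s_1^{-1}=\s_2^{-(j_2+1)}\in H$. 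This proves (a), and (b) follows by the duality above.

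For (d), write $\G=\GP{p,q}{i,1,-i,-1}$ and set $A=\langle\s_1\rangle$, $B=\langle\s_2\rangle$. It suffices to prove $BA\subseteq AB$: applying $x\mapsto x^{-1}$ to this inclusion gives $AB\subseteq BA$ as well, so $AB=BA$ is a subgroup of $\G$, and since it contains $\s_1$ and $\s_2$ it equals $\G$, i.e., $\G$ is tight. To get $BA\subseteq AB$ I would show by induction on $|a|$ that $\s_2^{\varepsilon}\s_1^{a}$ has the form $\s_1^{*}\s_2^{\pm1}$ for every $\varepsilon\in\{1,-1\}$ and every integer $a$; the cases $a\in\{-1,0,1\}$ follow from the defining relations $\s_2^{-1}\s_1=\s_1^{i}\s_2$ and $\s_2\s_1^{-1}=\s_1^{-i}\s_2^{-1}$ together with $\s_2\s_1=\s_1^{-1}\s_2^{-1}$ and $\s_2^{-1}\s_1^{-1}=\s_1\s_2$ (the last two from $(\s_1\s_2)^2=\eps$), and the inductive step multiplies $\s_2^{\varepsilon}\s_1^{a}=\s_1^{*}\s_2^{\pm1}$ on the right by $\s_1^{\pm1}$ and rewrites the trailing $\s_2^{\pm1}\s_1^{\pm1}$ with one of those same four rules. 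A further induction on $|b|$ then gives $\s_2^{b}\s_1^{a}\in AB$ for all $a,b$, i.e., $BA\subseteq AB$. Part (c) is the special case $i=-1$.

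For (e), write $\G=\GP{p,q}{i,j_1,-i,j_2}$ and let $N=\langle\s_2^{\,j_1-1}\rangle$, which is normal in $\G$ by part (a). In $\G/N$ we have $\ol{\s_2}^{\,j_1-1}=\eps$, and by $(\star)$ also $\ol{\s_2}^{\,j_2+1}=\eps$; hence $\ol{\s_2}^{\,j_1}=\ol{\s_2}$ and $\ol{\s_2}^{\,j_2}=\ol{\s_2}^{-1}$, so the images of the two rewriting relations become $\ol{\s_2}^{-1}\ol{\s_1}=\ol{\s_1}^{\,i}\ol{\s_2}$ and $\ol{\s_2}\,\ol{\s_1}^{-1}=\ol{\s_1}^{-i}\ol{\s_2}^{-1}$. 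Thus $\G/N$ satisfies every defining relation of $\GP{p,q}{i,1,-i,-1}$, so it is a quotient of that group, which is tight by (d); hence $\G/N$ is tight, and \pref{tight-crit} then shows that $\G$ itself is tight. The step I expect to be the real obstacle is the identity $(\star)$ in (a), together with the simple but essential observation that conjugate elements of the cyclic group $\langle\s_2\rangle$ generate the same subgroup; after that, the rest is either bookkeeping (the two inductions in (d)) or formal (duality for (b), and \pref{tight-crit} for (e)).
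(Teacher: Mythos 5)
Your proposal is correct, and its overall architecture matches the paper's: part (a) via the conjugation identity $\s_1\s_2^{j_1-1}\s_1^{-1}=\s_2^{-(j_2+1)}$ (your $(\star)$ is the same identity conjugated by $\s_1^{i}$ instead of $\s_1$, and you then recover the paper's form anyway), part (b) by duality, and part (e) by passing to the quotient by $\langle\s_2^{j_1-1}\rangle$ and invoking \pref{tight-crit}. You are also right to make explicit the small point the paper leaves implicit, namely that conjugate elements of the finite cyclic group $\langle\s_2\rangle$ generate the same subgroup. The one genuine divergence is in parts (c) and (d), where you reverse the paper's logical order. The paper proves (c) first, by deriving the explicit normal form $\s_2^a\s_1^b=\s_1^{b(-1)^a}\s_2^{a(-1)^b}$ in $\GP{p,q}{-1,1,1,-1}$, and then obtains (d) from (c) by quotienting by the normal subgroup $\langle\s_1^{i+1}\rangle$ supplied by part (b) and applying (the dual of) \pref{tight-crit}. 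You instead prove (d) directly by a rewriting induction showing $\langle\s_2\rangle\langle\s_1\rangle\subseteq\langle\s_1\rangle\langle\s_2\rangle$, with (c) falling out as the case $i=-1$; this works because all four products $\s_2^{\pm1}\s_1^{\pm1}$ rewrite to $\s_1^{*}\s_2^{\pm1}$, so the set $\langle\s_1\rangle\{\s_2,\s_2^{-1}\}$ is stable under right multiplication by $\s_1^{\pm1}$. Your route avoids using part (b) and \pref{tight-crit} at this stage and is self-contained, at the cost of not producing the explicit formula for $\s_2^a\s_1^b$ that the paper's computation yields (that formula is not needed elsewhere, so nothing is lost). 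Both arguments are valid.
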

	
	\begin{proof}
	For part (a), we find:
	\begin{align*}
	\s_1 \s_2^{j_1-1} &= \s_1 \s_2^{-1} \s_2^{j_1} \\
	&= \s_2^{-j_2} \s_1^i \s_2^{j_1} \\
	&= \s_2^{-j_2-1} \s_1.
	\end{align*}
	Thus, $\langle \s_2^{j_1-1} \rangle$ is normal and identical to $\langle \s_2^{j_2+1} \rangle$.
	Part (b) follows by a dual argument.

	For part (c), we note that in $\GP{p, q}{-1, 1, 1, -1}$, we have the relations $\s_2^{-1} \s_1 = \s_1^{-1} \s_2$
	and $\s_2 \s_1^{-1} = \s_1 \s_2^{-1}$.
	Using this and the standard relation $\s_2 \s_1 = \s_1^{-1} \s_2^{-1}$, we find that
	$\s_2^a \s_1 = \s_1^{(-1)^a} \s_2^{-a}$ for any $a$, and therefore 
	$\s_2^a \s_1^b = \s_1^{b(-1)^a} \s_2^{a(-1)^b}$ for any $a$ and $b$. It follows that we
	can rewrite any element of $\GP{p, q}{-1, 1, 1, -1}$ as the product of a power of $\s_1$
	with a power of $\s_2$, and so this group is tight.

	To prove part (d), we start by noting that $\GP{p, q}{i, 1, -i, -1}$ has normal subgroup 
	$\langle \s_1^{i+1} \rangle$, by part (b). The quotient by this subgroup is
	$\GP{p, q}{-1, 1, 1, -1}$, which is tight by
	part (c). Then \pref{tight-crit} implies that $\GP{p, q}{i, 1, -i, -1}$ is tight.
	Similarly, $\GP{p, q}{i, j_1, -i, j_2}$ has normal subgroup $\langle \s_2^{j_1-1} \rangle =
	\langle \s_2^{j_2 + 1} \rangle$ (by part (a)), and the quotient is $\GP{p, q}{i, 1, -i, -1}$.
	Applying \pref{tight-crit} again proves that $\GP{p, q}{i, j_1, -i, j_2}$ is tight, proving
	part (e).
	\end{proof}

	Note that in the group $\GP{p,q}{i_1, j_1, i_2, j_2}$, the orders of $\s_1$ and $\s_2$
	could in principle collapse to proper divisors of $p$ and $q$, respectively. 
	
	A few examples of these groups have been previously studied during the
	classification of tight regular polyhedra, and they will be useful to
	us shortly.
	
	\begin{proposition}
	\label{prop:regular-gp1}
	If $q$ is odd and $p$ is an even divisor of $2q$, then there is a unique tight orientably
	regular polyhedron of type $\{p, q\}$ (up to isomorphism), and its rotation group
	is $\GP{p, q}{3, 1, -3, -1}$.
	\end{proposition}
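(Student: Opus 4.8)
The plan is to establish existence by realizing $\GP{p,q}{3,1,-3,-1}$ directly as the rotation group of a suitable polyhedron, and then to obtain uniqueness from the classification of tight regular polyhedra.

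For existence, set $\G := \GP{p,q}{3,1,-3,-1}$. By \pref{gp-props2}(d) (taking $i=3$) the group $\G$ is tight, so $|\G| = |\langle\s_1\rangle\langle\s_2\rangle| \le pq$. To see that this bound is attained, I would exhibit an explicit group of order $pq$ onto which $\G$ surjects. Since $q$ is odd and $p$ is an even divisor of $2q$, we have $p = 2p'$ with $p'$ odd and $p' \mid q$; a convenient model is $G_0 = \Z_q \rtimes \Z_p$, where a generator of $\Z_p$ acts on $\Z_q$ by inversion (a well-defined action, since $p$ is even), so that $|G_0| = pq$. A direct computation produces elements of $G_0$ of orders $p$ and $q$ with cyclic subgroups meeting trivially, satisfying the defining relations of $\GP{p,q}{3,1,-3,-1}$; the only relations needing care are $(\s_1\s_2)^2 = \eps$ and $\s_2^{-1}\s_1 = \s_1^3\s_2$ (the relation $\s_2\s_1^{-1} = \s_1^{-3}\s_2^{-1}$ is the inverse of the latter in the presence of the standard relations). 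Combining the resulting surjection with $|\G| \le pq$ forces $\G \cong G_0$, so in $\G$ the element $\s_1$ has order $p$, $\s_2$ has order $q$, and $\langle\s_1\rangle \cap \langle\s_2\rangle = \{\eps\}$. By the construction of \cite{chiral}, together with the criterion recalled in Section 2 that a tight group $\langle\s_1,\s_2\rangle$ with $(\s_1\s_2)^2 = \eps$ and $\langle\s_1\rangle \cap \langle\s_2\rangle = \{\eps\}$ is the rotation group of a tight chiral or orientably regular polyhedron, $\G$ is the rotation group of a tight polyhedron $\calP$ of type $\{p,q\}$. Finally, the enantiomorphic form of $\GP{p,q}{3,1,-3,-1}$ is $\GP{p,q}{-(-3),-(-1),-3,-1} = \GP{p,q}{3,1,-3,-1}$ itself, so the assignment $\s_1 \mapsto \s_1^{-1},\ \s_2 \mapsto \s_2^{-1}$ extends to an automorphism of $\G$; hence $\calP$ is orientably regular, not chiral.

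For uniqueness, I would cite the classification of tight regular polyhedra in \cite{tight2,tight3}, according to which, for $q$ odd and $p$ an even divisor of $2q$, there is a single tight orientably regular polyhedron of type $\{p,q\}$; the one just constructed is then that polyhedron. A self-contained alternative: if $\calP'$ is another such polyhedron, tightness gives essentially unique expressions $\s_2^{-1}\s_1 = \s_1^{i_1}\s_2^{j_1}$ and $\s_2\s_1^{-1} = \s_1^{i_2}\s_2^{j_2}$ in $\G^+(\calP')$; applying the automorphism inverting both generators (available because $\calP'$ is orientably regular) to the first relation and comparing with the second gives $i_2 \equiv -i_1$ and $j_2 \equiv -j_1$, so $\G^+(\calP') \cong \GP{p,q}{i_1,j_1,-i_1,-j_1}$, and one then forces $(i_1,j_1) = (3,1)$ using that $\s_2^{j_1-1}$ is central (which follows from the relations) together with the constraints that $q$ is odd and $p \mid 2q$.

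The genuinely substantive step is the existence half: verifying that the order-$pq$ model $G_0$ really carries generators satisfying all the relations of $\GP{p,q}{3,1,-3,-1}$ — equivalently, carrying out the order computation inside $\GP{p,q}{3,1,-3,-1}$ directly. If instead one proves uniqueness by hand rather than citing it, the difficulty shifts to pinning the parameters $(i_1,j_1)$ down to $(3,1)$, which is the step where both arithmetic hypotheses on $p$ and $q$ are genuinely used.
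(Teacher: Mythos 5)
Your proposal is correct in outline, but it reaches the result by a genuinely different route on the existence side. The paper gets both existence and uniqueness wholesale from Theorems 3.1 and 3.3 of \cite{tight2}, which already classify the tight orientably regular polyhedra of type $\{r,s\}$ with $r$ odd and $s$ an even divisor of $2r$; the entire content of the paper's proof is translating the known presentation of the full automorphism group (with generators $\rho_0,\rho_1,\rho_2$ and the extra relation $(\rho_0\rho_1\rho_2\rho_1\rho_2)^2=\eps$) into the rotation-group form, deriving $\s_2^{-1}\s_1=\s_1^{-1}\s_2^{-3}$, conjugating by $\rho_1$ to get the companion relation, invoking \pref{gp-props2}(e) to upgrade ``quotient of $\GP{r,s}{-1,-3,1,3}$'' to equality, and then dualizing to land on $\GP{p,q}{3,1,-3,-1}$. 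You instead construct the group from scratch: tightness via \pref{gp-props2}(d) caps the order at $pq$, and a concrete model $\Z_q\rtimes\Z_p$ (generator of $\Z_p$ acting by inversion) realizes that bound. This works --- for instance $x=(u,t)$ and $y=(u,t^{p'-1})$ in your $G_0$, with $p=2p'$, have orders $p$ and $q$, trivially intersecting cyclic subgroups, and satisfy $(xy)^2=\eps$ and $y^{-1}x=x^3y$ precisely because $p'\mid e+f$ for the exponents $e=1$, $f=p'-1$ of $t$; and your observation that the relation $\s_2\s_1^{-1}=\s_1^{-3}\s_2^{-1}$ is automatic is right (it is just $\s_1^3\s_2\s_1^{-1}=(\s_2^{-1}\s_1)\s_1^{-1}=\s_2^{-1}$ rearranged). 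Two small points you should make explicit if you write this out: you must also verify that the chosen elements \emph{generate} $G_0$ (otherwise the lower bound $|\G|\geq pq$ fails; this is immediate here since $|\langle x\rangle\langle y\rangle|=pq$), and your uniqueness half still ultimately leans on the classification in \cite{tight2,tight3}, just as the paper's does --- your sketched self-contained alternative would need the nontrivial step of pinning $(i_1,j_1)$ to $(3,1)$, which is essentially the content of those cited theorems. What your route buys is a transparent structural description of the group (a cyclic central extension of a dihedral group) independent of the $\rho_i$-presentation; what the paper's route buys is brevity, since the hard work is entirely outsourced to \cite{tight2}.
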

	
	\begin{proof}
	Theorems 3.1 and 3.3 in \cite{tight2} demonstrate that there is a unique tight
	orientably regular polyhedron $\calP$ of type $\{r, s\}$ whenever $r$ is odd and $s$ is an
	even divisor of $2r$. The full automorphism group
	of $\calP$ is described there as
	\begin{align*}
	\langle \rho_0, \rho_1, \rho_2 &\mid \rho_0^2 = \rho_1^2 = \rho_2^2 = \eps, \\
	& (\rho_0 \rho_1)^r = (\rho_0 \rho_2)^2 = (\rho_1 \rho_2)^s = \eps, \\
	& (\rho_0 \rho_1 \rho_2 \rho_1 \rho_2)^2 = \eps \rangle.
	\end{align*}
	Setting $\s_1 = \rho_0 \rho_1$ and $\s_2 = \rho_1 \rho_2$, we see that the relation 
	$\s_1 \s_2^{-1} \s_1 \s_2^3 = \eps$ holds in $\G^+(\calP)$, 
	from which it follows that $\s_2^{-1} \s_1 = \s_1^{-1} \s_2^{-3}$. Conjugating this by $\rho_1$ shows
	that also $\s_2 \s_1^{-1} = \s_1 \s_2^3$. So $\G^+(\calP)$ is a quotient
	of $\GP{r, s}{-1, -3, 1, 3}$, and since this group is already tight (by \pref{gp-props2}(e)), it follows
	that $\G^+(\calP) = \GP{r, s}{-1, -3, 1, 3}$. It follows that the dual of $\calP$ is the unique
	tight orientably regular polyhedron of type $\{s, r\}$, and that it has group
	$\GP{s, r}{3, 1, -3, 1}$. Taking $s = p$ and $r = q$ then proves our claim.
	\end{proof}
	
	We will also need the following consequence of \cite[Thm. 4.13]{tight3}:
	
	\begin{proposition}
	\label{prop:regular-gp2}
	If $\alpha \geq 4$, then $\GP{2^{\alpha}, 4}{-1 + 2^{\alpha-1}, 1, 1 - 2^{\alpha-1}, -1}$
	is the rotation group of a tight orientably regular polyhedron of
	type $\{2^{\alpha}, 4\}$, and $\GP{2^{\alpha}, 2^{\alpha-1}}{3,1,-3,-1}$ is the
	rotation group of a tight orientably regular polyhedron of type $\{2^{\alpha}, 2^{\alpha-1}\}$.
	\end{proposition}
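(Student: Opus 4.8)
The plan is to follow the route used in the proof of \pref{regular-gp1}: invoke the classification of tight orientably regular polyhedra in \cite{tight3} to obtain a presentation of the full automorphism group of a tight orientably regular polyhedron $\calP$ of type $\{2^{\alpha}, 4\}$ (respectively $\{2^{\alpha}, 2^{\alpha-1}\}$), and then translate it into a presentation of the rotation group in the $\GP{\cdot}{\cdot}$ normal form. Concretely, \cite[Thm.~4.13]{tight3} presents $\G(\calP)$ with generators $\rho_0, \rho_1, \rho_2$ obeying the string relations $\rho_0^2 = \rho_1^2 = \rho_2^2 = (\rho_0\rho_2)^2 = \eps$, $(\rho_0\rho_1)^{2^{\alpha}} = \eps$ and $(\rho_1\rho_2)^{q} = \eps$ (with $q \in \{4, 2^{\alpha-1}\}$), together with one further defining relation; the work is to see what that relation says about $\s_1$ and $\s_2$.

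As usual I would put $\s_1 = \rho_0\rho_1$ and $\s_2 = \rho_1\rho_2$, so that $\G^+(\calP) = \langle \s_1, \s_2 \rangle$ with $\s_1^{2^{\alpha}} = \s_2^{q} = (\s_1\s_2)^2 = \eps$, and I would use that conjugation by $\rho_1$ is an automorphism of $\G^+(\calP)$ sending each $\s_i$ to $\s_i^{-1}$. Rewriting the extra relator of \cite[Thm.~4.13]{tight3} and repeatedly applying $(\s_1\s_2)^2 = \eps$ to move factors past one another should express it in the canonical form $\s_2^{-1}\s_1 = \s_1^{i_1}\s_2^{j_1}$; conjugating by $\rho_1$ then yields $\s_2\s_1^{-1} = \s_1^{-i_1}\s_2^{-j_1}$. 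Consequently $\G^+(\calP)$ is a quotient of $\GP{2^{\alpha}, q}{i_1, j_1, -i_1, -j_1}$, and the content of the calculation is to check that the exponents come out as $(i_1, j_1) = (-1 + 2^{\alpha-1},\, 1)$ when $q = 4$ and as $(i_1, j_1) = (3,\, 1)$ when $q = 2^{\alpha-1}$. (The second group, $\GP{2^{\alpha}, 2^{\alpha-1}}{3,1,-3,-1}$, has exactly the same shape as the rotation group appearing in \pref{regular-gp1} for odd $q$, so one expects the relevant relator, and hence the manipulation, to parallel that case closely.)

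To finish, observe that both $\GP{2^{\alpha}, 4}{-1+2^{\alpha-1}, 1, 1-2^{\alpha-1}, -1}$ and $\GP{2^{\alpha}, 2^{\alpha-1}}{3,1,-3,-1}$ are of the form $\GP{p,q}{i,1,-i,-1}$ and hence tight by \pref{gp-props2}(d); since $\s_1^{p} = \s_2^{q} = \eps$ holds in the presentation, tightness forces such a group to have order at most $pq$. But $\calP$ is, by \cite{tight3}, a tight polyhedron of type exactly $\{2^{\alpha}, q\}$, so $|\G^+(\calP)| = 2^{\alpha}q$; a surjection from a group of order at most $2^{\alpha}q$ onto one of order $2^{\alpha}q$ is an isomorphism, so $\G^+(\calP)$ coincides with the claimed group. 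Hence that group is the rotation group of a tight orientably regular polyhedron of the stated type. (Should \cite[Thm.~4.13]{tight3} instead provide the presentation for the dual type, the same conclusion follows after dualizing, using that the dual of $\GP{p,q}{i_1,j_1,i_2,j_2}$ is $\GP{q,p}{j_2,i_2,j_1,i_1}$.)

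The step I expect to be the main obstacle is the bookkeeping in the middle paragraph: locating the exact form of the extra relator in \cite[Thm.~4.13]{tight3} --- it may be recorded there in terms of the $\rho_i$, or with different sign or offset conventions --- and then, because $\s_1$ and $\s_2$ do not commute, carefully reducing it to the canonical form $\s_2^{-1}\s_1 = \s_1^{i_1}\s_2^{j_1}$. This is mechanical but delicate, just as in the step $\s_1\s_2^{-1}\s_1\s_2^3 = \eps \Rightarrow \s_2^{-1}\s_1 = \s_1^{-1}\s_2^{-3}$ in the proof of \pref{regular-gp1}; everything else (tightness via \pref{gp-props2}(d) and the order count) is routine.
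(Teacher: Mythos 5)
Your plan is sound and matches the paper's approach: the paper offers no proof at all for this proposition, stating it as an immediate consequence of \cite[Thm.~4.13]{tight3}, and your outline simply fills in the same template the paper uses for \pref{regular-gp1} (rewrite the cited presentation in terms of $\s_1, \s_2$, recognize the group as a quotient of a tight group of the form $\GP{p,q}{i,1,-i,-1}$, and conclude equality by comparing orders). The only piece you leave unverified --- that the extra relator of \cite[Thm.~4.13]{tight3} reduces to exactly the stated exponents --- is precisely what the paper elides by citing the theorem directly, and you correctly flag it as the sole remaining bookkeeping step.
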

	
	Finally, we need the following consequence of \cite[Lemma 6.2, Thm. 6.3]{tight-polytopes}:
	
	\begin{proposition}
	\label{prop:regular-gp3}
	If $p$ and $q$ are even, then $\GP{p, q}{-1, 1, 1, -1}$ is the rotation group of a tight
	orientably regular polyhedron of type $\{p, q\}$.
	\end{proposition}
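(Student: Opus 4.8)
The plan is to produce a concrete affine model of the group, determine its order by an elementary argument, and then feed the result into the construction machinery recalled in Section~2. Write $p = 2m$ and $q = 2n$, and let $G$ be the subgroup of the affine group of $\Z_p \times \Z_q$ generated by
\[
s_1 \colon (x,y) \mapsto (x\shp 1, -y), \qquad s_2 \colon (x,y) \mapsto (-x, y\shp 1).
\]
A direct check (which I would not write out in full) shows $s_1^p = s_2^q = (s_1 s_2)^2 = \tx{id}$, that $s_2^{-1} s_1$ and $s_1^{-1} s_2$ both act as $(x,y) \mapsto (-x\shm 1, -y\shm 1)$, and that $s_2 s_1^{-1}$ and $s_1 s_2^{-1}$ both act as $(x,y) \mapsto (1\shm x, 1\shm y)$; it also shows that $s_1$ and $s_2$ have orders exactly $p$ and $q$ (the sign changes on the two coordinates prevent any earlier collapse). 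Hence $s_1, s_2$ satisfy all the defining relations of $\GP{p,q}{-1,1,1,-1}$, so there is a surjection $\GP{p,q}{-1,1,1,-1} \twoheadrightarrow G$.

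Next I would pin down $|G|$. The translations $s_1^2$ and $s_2^2$ generate $T_0 = \langle (2,0),(0,2)\rangle \cong \Z_{p/2} \times \Z_{q/2}$, which is normal in $G$ (conjugating these translations by $s_1$ or $s_2$ merely flips a sign) and has order $pq/4$. The linear parts of $s_1$ and $s_2$ generate the group $\Z_2 \times \Z_2$ of coordinate sign changes, so $G$ surjects onto $\Z_2 \times \Z_2$ with kernel containing $T_0$, giving $|G| \geq pq$. Since $s_1^2$, $s_2^2$, and $(s_1 s_2)^2$ all lie in $T_0$, the quotient $G/T_0$ is generated by two elements whose squares and whose product's square are trivial, so $|G/T_0| \leq 4$ and $|G| \leq pq$. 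Thus $|G| = pq$. On the other hand $\GP{p,q}{-1,1,1,-1}$ is tight by \pref{gp-props2}(c), so it equals the set $\langle \s_1 \rangle \langle \s_2 \rangle$, which has at most $|\s_1| \cdot |\s_2| \leq pq$ elements. Comparing, the surjection onto $G$ must be an isomorphism; in particular $|\s_1| = p$, $|\s_2| = q$, and $\langle \s_1 \rangle \cap \langle \s_2 \rangle = \{\eps\}$.

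It then remains to extract the polyhedron. Since $\GP{p,q}{-1,1,1,-1}$ is tight, satisfies $(\s_1\s_2)^2 = \eps$, and has $\langle \s_1 \rangle \cap \langle \s_2 \rangle = \{\eps\}$, the construction recalled in Section~2 produces a tight chiral or orientably regular polyhedron with this rotation group, and its type is exactly $\{p,q\}$ because $\s_1$ and $\s_2$ have orders $p$ and $q$. To rule out chirality, note that the enantiomorphic form of $\GP{p,q}{-1,1,1,-1}$ is $\GP{p,q}{-1,1,1,-1}$ again, so $\s_i \mapsto \s_i^{-1}$ leaves the defining relations valid and hence extends to an automorphism of the group; by the criterion in Section~2 this forces the polyhedron to be orientably regular.

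The one spot I expect to require care is the exact value of $|G|$: everything downstream depends on $|G|$ being $pq$ on the nose, so I would want to be fully precise that no relations among $s_1$ and $s_2$ shrink $G/T_0$ below order $4$ and that $T_0$ really exhausts the translation part of $G$. If one prefers to bypass the explicit model, the alternative is to invoke \cite[Lemma 6.2, Thm. 6.3]{tight-polytopes} for the existence of a tight orientably regular polyhedron of type $\{p,q\}$ whenever $p$ and $q$ are even, then verify that its rotation group satisfies $\s_2^{-1}\s_1 = \s_1^{-1}\s_2$ and $\s_2 \s_1^{-1} = \s_1 \s_2^{-1}$, and use tightness of $\GP{p,q}{-1,1,1,-1}$ to conclude that the rotation group is exactly $\GP{p,q}{-1,1,1,-1}$.
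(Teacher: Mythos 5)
Your construction is correct, and it is genuinely different from what the paper does: the paper offers no proof at all for this proposition, simply recording it as a consequence of \cite[Lemma 6.2, Thm. 6.3]{tight-polytopes} (so your closing ``alternative'' is in fact the paper's entire argument). Your explicit affine model on $\Z_p \times \Z_q$ buys a self-contained verification: the relations of $\GP{p,q}{-1,1,1,-1}$ are easily checked for $s_1\colon (x,y)\mapsto(x\shp 1,-y)$ and $s_2\colon (x,y)\mapsto(-x,y\shp 1)$, the orders of $s_1$ and $s_2$ are visibly $p$ and $q$, and combining $|G|=pq$ with tightness from \pref{gp-props2}(c) pins the abstract group down exactly; the regularity conclusion via the self-enantiomorphy of $\GP{p,q}{-1,1,1,-1}$ is the same criterion the paper sets up in Section~2. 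The one place you rightly flagged does need a patch, but only in a degenerate case: when $p=2$ (or $q=2$) the linear part of $s_2$ (resp.\ $s_1$) is trivial, since $-x=x$ in $\Z_2$, so the linear parts generate only $\Z_2$ and your lower bound $|G|\geq pq$ does not follow as stated. The conclusion survives: either handle $p=2$ or $q=2$ directly (there $G$ is visibly the dihedral-type group of order $pq$), or avoid the order count altogether by reading off from the formulas $s_1^a(x,y)=(x+a,(-1)^a y)$ and $s_2^b(x,y)=((-1)^b x, y+b)$ that $\langle s_1\rangle\cap\langle s_2\rangle$ is trivial and the orders are exactly $p$ and $q$, which together with tightness is all that the construction recalled in Section~2 requires.
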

	
	Our first goal will be to describe three families of tight chiral polyhedra that,
	we will see later, are particularly important. Most of these polyhedra will
	have automorphism groups $\GP{p,q}{i_1, j_1, i_2, j_2}$. 
	Given such a group, we will often need to show three things: that it is the
	automorphism group of a polyhedron $\calP$, that $\calP$ is tight, and
	that $\calP$ is chiral. In many cases, we are able to use the following ``bootstrapping'' lemma:
	
	\begin{lemma}
	\label{lem:lifting}
	Let $\G = \GP{p, q}{i, j_1, -i, j_2}$, and suppose that $\s_2$ has order $q$ in $\G$.
	Suppose that for some $q'$ dividing $q$, $\G$ covers the group $\GP{p, q'}{i, j_1, -i, j_2}$,
	and that $\GP{p, q'}{i, j_1, -i, j_2}$ is the rotation group of a tight
	chiral or orientably regular polyhedron of type $\{p, q'\}$. If 
	$j_1 \not \equiv -j_2$ (mod $q$), then $\G$ is the automorphism group
	of a tight chiral polyhedron of type $\{p, q\}$.
	\end{lemma}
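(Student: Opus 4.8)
The plan is to establish the three required properties of $\G = \GP{p,q}{i,j_1,-i,j_2}$ in turn: that it is tight, that it is the rotation group of a polyhedron of type $\{p,q\}$, and that the resulting polyhedron is chiral rather than orientably regular. Tightness is immediate from \pref{gp-props2}(e), which says every group of the form $\GP{p,q}{i,j_1,-i,j_2}$ is tight, so that costs nothing. For polyhedrality, by the construction of \cite{chiral} I need $\langle \s_1 \rangle \cap \langle \s_2 \rangle = \{\eps\}$ together with $\s_1$ having order $p$ and $\s_2$ having order $q$; the order of $\s_2$ is $q$ by hypothesis, and I would extract the order of $\s_1$ and the triviality of the intersection by pushing down to the known quotient $\GP{p,q'}{i,j_1,-i,j_2}$, which is already the rotation group of a (tight) polyhedron of type $\{p,q'\}$. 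In that quotient $\s_1$ has order $p$, so $\s_1$ has order (a multiple of $p$, hence exactly) $p$ in $\G$ as well. For the intersection: an element of $\langle\s_1\rangle \cap \langle\s_2\rangle$ in $\G$ has the form $\s_1^a = \s_2^b$; mapping to the quotient forces $\s_1^a \in \langle \s_1 \rangle \cap \langle \s_2 \rangle = \{\eps\}$ there, so $p \mid a$ and $\s_1^a = \eps$, whence $\s_2^b = \eps$ in $\G$ too. That handles polyhedrality.

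The main obstacle is showing $\calP$ is chiral, i.e.\ that $\G$ admits no automorphism inverting both generators. This is exactly where the hypothesis $j_1 \not\equiv -j_2 \pmod q$ must be used. Suppose for contradiction that such an automorphism $\tau$ exists, sending $\s_1 \mapsto \s_1^{-1}$ and $\s_2 \mapsto \s_2^{-1}$. Apply $\tau$ to the defining relation $\s_2^{-1}\s_1 = \s_1^{i}\s_2^{j_1}$: the left side becomes $\s_2\s_1^{-1}$ and the right side becomes $\s_1^{-i}\s_2^{-j_1}$, so $\G$ satisfies $\s_2\s_1^{-1} = \s_1^{-i}\s_2^{-j_1}$. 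But $\G$ already satisfies $\s_2\s_1^{-1} = \s_1^{-i}\s_2^{j_2}$ by definition, and since every element has an essentially unique expression $\s_1^a\s_2^b$ (with $a$ determined mod $p$ and $b$ mod $q$) in a tight polyhedral group, comparing the two expressions for $\s_2\s_1^{-1}$ gives $-j_1 \equiv j_2 \pmod q$, i.e.\ $j_1 \equiv -j_2 \pmod q$, contradicting the hypothesis. Hence no such $\tau$ exists, and $\calP$ is chiral.

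One point deserving care in the last step is the legitimacy of "comparing exponents": I should first note that because $\G$ is tight, polyhedral, and of type $\{p,q\}$, any group element's representation $\s_1^a\s_2^b$ is unique with $0 \le a < p$ and $0 \le b < q$ — this is the "essentially unique representation" recorded in the text preceding the definition of $\GP{p,q}{\cdots}$, and it follows from $|\G| = pq$ together with $\langle\s_1\rangle\cap\langle\s_2\rangle = \{\eps\}$. Only after that is the congruence $-j_1 \equiv j_2 \pmod q$ a valid deduction. I would also remark that $\calP$ is genuinely chiral and not just "not orientably regular in this presentation": since it is a polyhedron built from a tight rotation group via \cite{chiral}, the dichotomy there is exactly orientably regular versus chiral, so ruling out the inverting automorphism suffices. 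Assembling these pieces — tightness for free, polyhedrality by lifting orders and the trivial-intersection property through the quotient, and chirality by the exponent-comparison argument driven by $j_1 \not\equiv -j_2$ — completes the proof; the chirality argument is the crux and the only place the numerical hypothesis enters.
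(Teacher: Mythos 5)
Your proposal is correct and follows essentially the same route as the paper: tightness from \pref{gp-props2}(e), polyhedrality via the known tight quotient of type $\{p,q'\}$, and chirality by applying a hypothetical generator-inverting automorphism to one defining relation and comparing with the other to force $j_1 \equiv -j_2 \pmod q$. The only cosmetic difference is that you verify $\langle \s_1 \rangle \cap \langle \s_2 \rangle = \{\eps\}$ directly by pushing a putative intersection element into the quotient, whereas the paper packages this step as an application of the quotient criterion of \cite{chiral-mix}; both are valid.
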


	\begin{proof}
	First, \pref{gp-props2}(e) implies that $\G$ is tight.
	Since $\GP{p, q'}{i, j_1, -i, j_2}$ is the rotation group
	of a tight chiral or orientably regular polyhedron of type $\{p, q'\}$, it has order
	$pq'$. Similarly, since $\G$ is tight and $\s_1^p = \s_2^q = \eps$ it follows
	that $\G$ has order at most $pq$. Since $\s_2$ has order $q$ in $\G$ and $q'$ divides $q$,
	it follows that $\s_2^{q'}$ has order $q/q'$ in $\G$. Then the kernel of the covering
	from $\G$ to $\GP{p,q'}{i, j_1, -i, j_2}$ cannot be any larger than $\langle \s_2^{q'} \rangle$,
	and so $\langle \s_2^{q'} \rangle$ is normal in $\G$.
	Then the \emph{quotient criterion} \cite[Lemma 3.2]{chiral-mix} implies that $\G$
	is the rotation group of a tight chiral or orientably regular polyhedron $\calP$
	of type $\{p, q\}$.
	
	Suppose that $\calP$ is regular. Then $\G^+(\calP)$ has a group automorphism that sends
	each $\s_i$ to $\s_i^{-1}$, and so from the relation
	$\s_2 \s_1^{-1} = \s_1^{-i} \s_2^{j_2}$, it follows that $\s_2^{-1} \s_1 =
	\s_1^{i} \s_2^{-j_2}$. Combining with the relation $\s_2^{-1} \s_1 = \s_1^{i} \s_2^{j_1}$,
	we get that $\s_2^{j_1} = \s_2^{-j_2}$, and since $\s_2$ has order $q$, it follows
	that $j_1 \equiv -j_2$ (mod $q$). So if $j_1 \not \equiv -j_2$, then $\calP$ is chiral.
	\end{proof}
	
	\subsection{First three families of tight chiral polyhedra}
	
	In each of the next three theorems, we give a presentation for a family of groups,
	and it will be clear that the groups cover one of the ones in \pref{regular-gp1}
	or \pref{regular-gp2}. In light of \lref{lifting}, all that then remains
	is to show that $\s_2$ has the correct order. To do so, we will build a
	permutation representation of the given group. 
	Indeed, the representation we use is simply the action of the group on
	the cosets of $\langle \s_1 \rangle$; this was used to determine the
	proper definitions of the permutations $\pi_1$ and $\pi_2$, but we do not
	rely on this fact for the proofs, nor do we prove that the permutation representation is faithful.

	\begin{theorem}
	\label{thm:odd-atomics}
	For every odd prime $m$, positive integer ${\beta} \geq 2$, and integer $k$ satisfying $1 \leq k \leq m-1$,
	the group 
	\[ \GP{2m, m^{\beta}}{3, 1+km^{\beta-1}, -3, -1+km^{\beta-1}} \]
	is the automorphism group of a tight chiral polyhedron of type $\{2m, m^{\beta}\}$.
	\end{theorem}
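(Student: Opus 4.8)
The plan is to apply \lref{lifting} with $i = 3$, $j_1 = 1+km^{\beta-1}$, $j_2 = -1+km^{\beta-1}$, $q = m^\beta$, and $q' = m$. First I would check the hypotheses of that lemma that do not involve the order of $\s_2$. The group $\GP{2m, m}{3, 1+km^{\beta-1}, -3, -1+km^{\beta-1}}$ is, modulo $m$, just $\GP{2m,m}{3,1,-3,-1}$, which by \pref{regular-gp1} (with $p = 2m$ an even divisor of $2m$ and $q = m$ odd) is the rotation group of the unique tight orientably regular polyhedron of type $\{2m, m\}$; and the covering $\G \to \GP{2m,m}{3,1,-3,-1}$ is visibly well-defined since reducing $j_1, j_2$ modulo $m$ kills the $km^{\beta-1}$ terms (here $\beta \geq 2$). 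The non-chirality obstruction $j_1 \not\equiv -j_2 \pmod q$ amounts to $1 + km^{\beta-1} \not\equiv 1 - km^{\beta-1} \pmod{m^\beta}$, i.e. $2km^{\beta-1} \not\equiv 0 \pmod{m^\beta}$, i.e. $2k \not\equiv 0 \pmod m$, which holds because $m$ is an odd prime and $1 \leq k \leq m-1$.

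The remaining, and main, task is to show that $\s_2$ genuinely has order $m^\beta$ in $\G$ (a priori it could collapse to a proper divisor of $m^\beta$). For this I would construct an explicit permutation representation. Following the remark preceding the theorem, I would let $\G$ act on a set of size $2m \cdot m^\beta / (2m) = m^\beta$ indices — morally the cosets of $\langle \s_1 \rangle$ — writing permutations $\pi_1$ (of order dividing $2m$) and $\pi_2$ (of order $m^\beta$) on $\Z_{m^\beta}$ or on $\Z_m \times \Z_{m^{\beta-1}}$, with $\pi_2$ chosen to act as a full $m^\beta$-cycle (e.g.\ $x \mapsto x+1$ on $\Z_{m^\beta}$) so that its order is manifestly $m^\beta$. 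I would then verify directly that $\pi_1, \pi_2$ satisfy all the defining relations of $\GP{2m, m^\beta}{3, 1+km^{\beta-1}, -3, -1+km^{\beta-1}}$: namely $\pi_1^{2m} = \pi_2^{m^\beta} = (\pi_1\pi_2)^2 = \eps$, $\pi_2^{-1}\pi_1 = \pi_1^3 \pi_2^{1+km^{\beta-1}}$, and $\pi_2 \pi_1^{-1} = \pi_1^{-3}\pi_2^{-1+km^{\beta-1}}$. This gives a surjection $\G \twoheadrightarrow \langle \pi_1, \pi_2 \rangle$ under which $\s_2 \mapsto \pi_2$, so $\s_2$ has order at least $m^\beta$; combined with the relation $\s_2^{m^\beta} = \eps$, it has order exactly $m^\beta$.

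With those two pieces in hand, \lref{lifting} immediately gives that $\G = \GP{2m, m^\beta}{3, 1+km^{\beta-1}, -3, -1+km^{\beta-1}}$ is the automorphism group of a tight chiral polyhedron of type $\{2m, m^\beta\}$; in particular tightness comes for free from \pref{gp-props2}(e) since the relations are of the form $\s_2^{-1}\s_1 = \s_1^i \s_2^{j_1}$, $\s_2\s_1^{-1} = \s_1^{-i}\s_2^{j_2}$.

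The step I expect to be the main obstacle is pinning down the correct formulas for $\pi_1$ and $\pi_2$ and then pushing through the verification of the two ``twisted commutation'' relations $\pi_2^{-1}\pi_1 = \pi_1^3\pi_2^{1+km^{\beta-1}}$ and $\pi_2\pi_1^{-1} = \pi_1^{-3}\pi_2^{-1+km^{\beta-1}}$ — these are genuine computations in a non-abelian permutation group where the $km^{\beta-1}$ correction terms must land exactly right, and getting the bookkeeping of exponents modulo $m^\beta$ correct (and consistent with $\pi_1$ having order dividing $2m$ and $(\pi_1\pi_2)^2 = \eps$) is where the real work lies. Everything else — the covering, the order bound once the representation is in place, and the chirality inequality — is routine modular arithmetic with $m$ an odd prime.
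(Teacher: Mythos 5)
Your outline reproduces the paper's proof structure exactly: reduce everything to \lref{lifting} by exhibiting a covered tight orientably regular quotient (you take $q' = m$ where the paper takes $q' = m^{\beta-1}$; both work via \pref{regular-gp1}, since $\beta \ge 2$ makes the $km^{\beta-1}$ terms vanish in the quotient), check $j_1 \not\equiv -j_2 \pmod{m^\beta}$, and then pin down the order of $\s_2$ with a permutation representation on the cosets of $\langle \s_1 \rangle$. Every step you actually carry out is correct.

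The gap is that the one step carrying the real content of the theorem --- exhibiting $\pi_1$ explicitly and verifying the twisted commutation relations --- is left as a placeholder, and you say so yourself. Without an explicit $\pi_1$ there is no lower bound on the order of $\s_2$, and a priori the group could collapse; the paper is explicit that in $\GP{p,q}{i_1,j_1,i_2,j_2}$ the orders of the generators may degenerate to proper divisors. For the record, the paper's construction is: first replace $k$ by $-2k$ (harmless, since $k \mapsto -2k$ permutes the nonzero residues mod $m$ for $m$ an odd prime), then act on $\Z_{m^\beta}$ by $b\pi_1 = -b + b(1-b)km^{\beta-1}$ and $b\pi_2 = b+1$. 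One computes $b\pi_1^2 = b(1-2km^{\beta-1})$, hence $b\pi_1^{2n} = b(1-2nkm^{\beta-1})$ and $\pi_1^{2m} = \mathrm{id}$, which simultaneously shows $\pi_1$ is a permutation and controls its order; the relations $\pi_2^{-1}\pi_1 = \pi_1^3\pi_2^{j_1}$ and $\pi_2\pi_1^{-1} = \pi_1^{-3}\pi_2^{j_2}$ then follow by direct calculation mod $m^\beta$. Since $\pi_2$ visibly has order $m^\beta$, so does $\s_2$. So your plan is the right one and the missing piece is fillable, but as written the proposal defers exactly the part of the argument that is not routine.
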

	
	\begin{proof}
	Let $\G := \GP{2m, m^{\beta}}{3, 1+km^{\beta-1}, -3, -1+km^{\beta-1}}$. 
	Then $\G$ covers $\GP{2m, m^{\beta-1}}{3, 1, -3, -1}$ (by inspection of the presentations), 
	which by \pref{regular-gp1} is
	the rotation group of a tight orientably regular polyhedron of type $\{2m, m^{\beta-1}\}$.
	In light of \lref{lifting}, 
	all that remains is to show that $\s_2$ has order $m^{\beta}$.
	To do so, we provide a permutation representation of
	$\G$ on $\Z_{m^{\beta}}$. To simplify the representation, we will actually provide
	a permutation representation of $\GP{2m, m^{\beta}}{3, 1-2km^{\beta-1}, -3, -1-2km^{\beta-1}}$
	(that is, with $k$ changed to $-2k$); since $m$ is an odd prime and $1 \leq k \leq m-1$,
	this defines the same set of groups.
	We define functions $\pi_1$ and $\pi_2$ on $\Z_{m^{\beta}}$ by
	\[ b \pi_1 = - b + b(1-b) k m^{{\beta}-1} \]
	\[ b \pi_2 = b + 1. \]
	First of all, we need to demonstrate that $\pi_1$ is actually a permutation. 
	(It is obvious that $\pi_2$ is.) An easy calculation shows that $b \pi_1^2 = 
	b(1-2km^{{\beta}-1})$. Thus, for each $n$,
	\[ b \pi_1^{2n} = b(1-2km^{{\beta}-1})^n = b(1-2nkm^{{\beta}-1}), \]
	since we are working modulo $m^{\beta}$.
	In particular, $b \pi_1^{2m} = b(1-2km^{\beta}) = b$. So a finite power of $\pi_1$ is the identity,
	which implies that it is a permutation.
	It is now straightforward to check that $\langle \pi_1, \pi_2 \rangle$ satisfies all of the defining
	relations of $\GP{2m, m^{\beta}}{3, 1-2km^{\beta-1}, -3, -1-2km^{\beta-1}}$ 
	(with each $\s_i$ replaced by $\pi_i$), and so $\langle \pi_1, \pi_2 \rangle$
	really is a permutation representation of this group. It is clear that $\pi_2$ has order $m^{\beta}$,
	and so $\s_2$ does as well. So $\G$ is the automorphism group of a tight chiral 
	polyhedron of type $\{2m, m^{\beta}\}$.
	\end{proof}
	
	\begin{theorem}
	\label{thm:even-atomics}
	For each positive integer ${\beta} \geq 5$, the groups 
	\[ \GP{8, 2^{\beta}}{3, 1-2^{\beta-2}, -3, -1-2^{\beta-2}} \] 
	and
	\[ \GP{8, 2^{\beta}}{3, 1+2^{\beta-2}, -3, -1+2^{\beta-2}} \] 
	are the automorphism groups of tight chiral polyhedra of type $\{8, 2^{\beta}\}$. 
	\end{theorem}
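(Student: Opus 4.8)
The plan is to mirror the proof of \tref{odd-atomics}. First observe that the two displayed groups are enantiomorphic forms of one another (replacing each $\s_i$ by $\s_i^{-1}$ converts one presentation into the other), hence isomorphic as abstract groups, so it is enough to treat a single group $\G := \GP{8, 2^\beta}{3, 1 + \epsilon 2^{\beta-2}, -3, -1 + \epsilon 2^{\beta-2}}$ with $\epsilon \in \{-1, +1\}$ fixed. Since $\G$ has the shape $\GP{p, q}{i, j_1, -i, j_2}$, \pref{gp-props2}(e) shows at once that $\G$ is tight. The idea is then to apply \lref{lifting}: tightness is now in hand, the covering hypothesis and the inequality $j_1 \not\equiv -j_2$ will be checked by inspection, and the only substantive point that remains is that $\s_2$ has order exactly $2^\beta$ in $\G$.

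For the covering: by inspection of the presentation, $\G$ covers $\GP{8, q'}{3, 1 + \epsilon 2^{\beta-2}, -3, -1 + \epsilon 2^{\beta-2}}$ for a suitable proper divisor $q'$ of $2^\beta$ --- once $\s_2^{q'} = \eps$ is imposed the perturbation $\epsilon 2^{\beta-2}$ either disappears outright or, at the level $q' = 2^{\beta-1}$, collapses to the half-order term $\epsilon 2^{\beta-1}$, and in either case the quotient is (possibly after applying duality) one of the tight orientably regular polyhedra supplied by \pref{regular-gp2} and the classification of tight regular polyhedra in \cite{tight3}. Moreover $j_1 + j_2 = \epsilon 2^{\beta-1} \not\equiv 0 \pmod{2^\beta}$, so $j_1 \not\equiv -j_2 \pmod{2^\beta}$. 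Hence, granted that $\s_2$ has order $2^\beta$ in $\G$, \lref{lifting} yields at once that $\G$ is the automorphism group of a tight \emph{chiral} polyhedron of type $\{8, 2^\beta\}$ --- the chirality being precisely the conclusion of that lemma when $j_1 \not\equiv -j_2$.

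It therefore remains only to verify that $\s_2$ has order $2^\beta$ in $\G$, and I would do this, exactly as in \tref{odd-atomics}, by constructing an explicit permutation representation of $\G$ on $\Z_{2^\beta}$. Take $b \pi_2 = b + 1$, of order $2^\beta$, and take $\pi_1$ to be a ``perturbed negation'' $b \pi_1 = -b + g(b)\, 2^{\beta-2}$ for a fixed low-degree polynomial $g$ (possibly after first rescaling $\epsilon$ by a convenient odd unit, in the spirit of the substitution $k \mapsto -2k$ used there). The steps are: (i) confirm $\pi_1$ is a permutation by showing that $\pi_1^8$ is the identity --- the linear part collapses upon raising to the eighth power, and the correction term, a multiple of $2^{\beta-2}$, is annihilated by one further $2$-adic step; (ii) verify that $\pi_1$ and $\pi_2$ satisfy every defining relation of $\G$ with $\s_i$ replaced by $\pi_i$, the only non-mechanical ones being $\s_2^{-1}\s_1 = \s_1^3 \s_2^{1 + \epsilon 2^{\beta-2}}$ and $\s_2 \s_1^{-1} = \s_1^{-3} \s_2^{-1 + \epsilon 2^{\beta-2}}$; (iii) conclude that $\s_2$, which maps onto $\pi_2$, has order at least, hence exactly, $2^\beta$.

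The principal obstacle is step (ii): choosing $g$ so that those two non-linear relations hold identically modulo $2^\beta$. This is more delicate than in the odd-prime case, since $2$ is not a unit modulo $2^\beta$ (so the parameter substitution that streamlined \tref{odd-atomics} is less freely available) and the exponent $3$ of the order-$8$ generator must be carried through every computation. A useful sanity check is that the argument has to break down when $\beta = 4$: the type $\{8, 16\}$ does not occur among the Schl\"afli symbols of tight chiral polyhedra, so for $\beta = 4$ the permutation representation must fail to force $\s_2$ to have order $16$ (or else the base regular polyhedron used for the covering must cease to exist) --- and this is precisely where the hypothesis $\beta \geq 5$ is used.
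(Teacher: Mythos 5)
Your overall strategy is exactly the paper's: reduce to one group via enantiomorphism, get tightness from \pref{gp-props2}(e), apply \lref{lifting} after checking $j_1 + j_2 = \pm 2^{\beta-1} \not\equiv 0 \pmod{2^\beta}$, and establish that $\s_2$ has order $2^\beta$ via a permutation representation on $\Z_{2^\beta}$. Two points, though. First, a minor one: for the covering hypothesis of \lref{lifting} the paper takes $q' = 4$, where the perturbation $2^{\beta-2}$ vanishes (since $\beta \geq 4$) and the quotient is $\GP{8,4}{3,1,-3,-1}$; but \pref{regular-gp2} only applies for $\alpha \geq 4$, i.e.\ $p \geq 16$, so it does not supply the base polyhedron of type $\{8,4\}$ --- the paper resorts to a GAP computation for this one case. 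Your appeal to \pref{regular-gp2} ``and the classification in \cite{tight3}'' glosses over this.

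Second, and more seriously: the entire substantive content of the proof is the explicit permutation $\pi_1$, and you never produce it --- you explicitly flag choosing $g$ as ``the principal obstacle.'' Worse, the shape you propose, $b\pi_1 = -b + g(b)\,2^{\beta-2}$ with $g$ a polynomial, provably cannot work. The relations force $b\pi_1^2 = b(1 - 2^{\beta-2})$, but with a correction term that is a multiple of $2^{\beta-2}$ one computes $b\pi_1^2 \equiv b + \bigl(g(-b) - g(b)\bigr)2^{\beta-2} \pmod{2^\beta}$, and $g(-b) - g(b)$ is always even for a polynomial $g$, whereas the required multiplier $-b$ is odd for odd $b$. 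The correction must live one $2$-adic level deeper: the paper takes $b\pi_1 = -b + b(1-b)\,2^{\beta-3}$, for which $b\pi_1^2 = b(1-2^{\beta-2})$ does hold (using $\beta \geq 5$, which is where that hypothesis actually enters --- your sanity check about $\beta = 4$ is pointing at the right place). As it stands, the verification that $\s_2$ has order $2^\beta$ --- the one thing the theorem really requires --- is missing.
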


	\begin{proof}
	We prove the result for $\G = \GP{8, 2^{\beta}}{3, 1-2^{\beta-2}, -3, -1-2^{\beta-2}}$;
	the second group is the enantimorphic form of the first and will thus follow.
	The group $\G$ covers $\GP{8, 4}{3, 1, -3, -1}$,
	and a calculation with GAP \cite{gap} shows that this is the rotation group
	of a tight orientably regular polyhedron of type $\{8, 4\}$. 
	By \lref{lifting}, we will be done if we can show that $\s_2$ has order
	$2^{\beta}$. 
	We provide a permutation representation of $\G$ on $\Z_{2^{\beta}}$,
	defining functions $\pi_1$ and $\pi_2$ as follows:
	\[ b \pi_1 = -b + b(1-b) 2^{{\beta}-3}, \]
	\[ b \pi_2 = b+1. \]
	It is straightforward to show that $\pi_1^2$ sends $b$ to 
	$b(1-2^{{\beta}-2})$, and then that $\langle \pi_1, \pi_2 \rangle$ 
	satisfies all of the defining relations of $\G$. It follows that 
	$\langle \pi_1, \pi_2 \rangle$ really is a permutation representation of $\G$, 
	and since $\pi_2$ clearly has order $2^{\beta}$, so does $\s_2$, and the result follows.
	\end{proof}

	\begin{theorem}
	\label{thm:even-atomics2}
	For each positive integer $\beta \geq 5$, the groups
	\[ \GP{2^{\beta-1}, 2^{\beta}}{-1 + 2^{\beta-2}, -3 + 2^{\beta-2}, 1 - 2^{\beta-2}, 3 + 2^{\beta-2}} \]
	and
	\[ \GP{2^{\beta-1}, 2^{\beta}}{-1 + 2^{\beta-2}, -3 - 2^{\beta-2}, 1 - 2^{\beta-2}, 3 - 2^{\beta-2}} \]
	are the automorphism groups of tight chiral polyhedra of type
	$\{2^{\beta-1}, 2^{\beta}\}$.
	\end{theorem}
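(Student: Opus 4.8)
Here is how I would attack \tref{even-atomics2}.

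\medskip

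The plan is to establish the theorem for the first group,
\[ \G := \GP{2^{\beta-1}, 2^{\beta}}{-1+2^{\beta-2},\,-3+2^{\beta-2},\,1-2^{\beta-2},\,3+2^{\beta-2}}, \]
and then to read off the second group as the enantiomorphic form of $\G$, via the recipe $\GP{p,q}{i_1,j_1,i_2,j_2}\mapsto\GP{p,q}{-i_2,-j_2,-i_1,-j_1}$. Once $\G$ is known to be the automorphism group of a tight chiral polyhedron $\calP$ of type $\{2^{\beta-1},2^{\beta}\}$, its enantiomorphic form is the automorphism group of the enantiomorphic form of $\calP$, which is again a tight chiral polyhedron of the same type, so the second assertion comes for free. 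Note also that $\G$ has the form $\GP{p,q}{i,j_1,-i,j_2}$ with $i=-1+2^{\beta-2}$, so \pref{gp-props2}(e) immediately gives that $\G$ is tight.

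Following the strategy announced just before \tref{odd-atomics}, I would next prepare to apply \lref{lifting}. Since $\beta\geq 5$, the quantity $2^{\beta-2}$ is divisible by $4$, so adjoining the relation $\s_2^4=\eps$ to $\G$ replaces the $\s_2$-exponents $-3+2^{\beta-2}$ and $3+2^{\beta-2}$ by $1$ and $-1$, hence yields $\GP{2^{\beta-1},4}{-1+2^{\beta-2},\,1,\,1-2^{\beta-2},\,-1}$; so $\G$ covers this group. By \pref{regular-gp2}, taken with $\alpha=\beta-1\geq 4$, this quotient is the rotation group of a tight orientably regular polyhedron of type $\{2^{\beta-1},4\}$, so it plays the role of $\GP{p,q'}{i,j_1,-i,j_2}$ in \lref{lifting} with $q'=4$. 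Furthermore $j_1+j_2 = 2^{\beta-1}\not\equiv 0$ (mod $2^{\beta}$), so $j_1\not\equiv-j_2$ (mod $q$). Thus by \lref{lifting} the whole theorem reduces to the single claim that $\s_2$ has order $2^{\beta}$ in $\G$.

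For that, I would build a permutation representation of $\G$ on $\Z_{2^{\beta}}$ exactly as in the proofs of \tref{odd-atomics} and \tref{even-atomics}: take $\pi_2$ to be the shift $b\mapsto b+1$, which visibly has order $2^{\beta}$, and take $\pi_1$ to be an explicit map of $b$, guided by the action of $\G$ on the right cosets of $\langle\s_1\rangle$. Here $\pi_1$ will be more elaborate than in \tref{even-atomics}, because $p=2^{\beta-1}$ is now comparable to $q$ and $\s_1$ no longer acts nearly like a single power of a shift (a case split on $b$ modulo a small power of $2$ may be required). The steps are then: first, check that $\pi_1$ is actually a permutation, by computing a closed form for a suitable power of $\pi_1$ and observing that it is the identity -- this is where $\beta\geq 5$ is used, guaranteeing that the terms carrying two ``correction'' factors vanish modulo $2^{\beta}$; second, verify by direct computation modulo $2^{\beta}$ that $\langle\pi_1,\pi_2\rangle$ satisfies every defining relation of $\G$, so that $\langle\pi_1,\pi_2\rangle$ is a quotient of $\G$; third, conclude that the image of $\s_2$, and hence $\s_2$ itself, has order exactly $2^{\beta}$, since $\s_2^{2^{\beta}}=\eps$ holds in $\G$.

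The hard part will be the second of these steps, together with the preliminary task of writing down the correct $\pi_1$ in the first place: this is a long but routine verification, whose only subtlety is keeping careful track of which terms survive modulo $2^{\beta}$. Everything else is bookkeeping with the parameters and direct appeals to \pref{gp-props2}(e), \pref{regular-gp2}, and \lref{lifting}.
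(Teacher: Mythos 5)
Your reduction is exactly the paper's: prove the result for the first group and obtain the second as its enantiomorphic form; observe that since $2^{\beta-2}\equiv 0$ (mod $4$) for $\beta\geq 5$, the group $\G$ covers $\GP{2^{\beta-1},4}{-1+2^{\beta-2},1,1-2^{\beta-2},-1}$, which is handled by \pref{regular-gp2} with $\alpha=\beta-1$; check $j_1+j_2=2^{\beta-1}\not\equiv 0$ (mod $2^{\beta}$); and invoke \lref{lifting} to reduce everything to showing that $\s_2$ has order $2^{\beta}$. All of that bookkeeping is correct and matches the paper step for step.

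However, the proof is not complete: the one substantive claim --- that $\s_2$ has order $2^{\beta}$ --- is never actually established. You say you would ``take $\pi_1$ to be an explicit map of $b$'' with ``a case split on $b$ modulo a small power of $2$,'' but you never produce such a map, and without it there is no permutation representation and no conclusion. This is not a formality one can wave at: finding a $\pi_1$ that genuinely satisfies the relations of $\G$ (rather than of some nearby group) is the entire content of the proof, and it is not obvious a priori that one exists on a set of size $2^{\beta}$. For the record, the paper takes
\[ b \pi_1 = \begin{cases}
	b + 2^{\beta-3} b(b-1), &\textrm{ if $b$ is even,} \\
	b - 2 + 2^{\beta-3} b(b-1), &\textrm{ if $b$ is odd,}
	\end{cases} \]
together with $b\pi_2=b+1$ on $\Z_{2^{\beta}}$; one then computes $\pi_1^2$, $\pi_1^4$, and $\pi_1^8$ separately for $b$ even and $b$ odd (finding $b\pi_1^8=b$ for $b$ even and $b\pi_1^8=b-16$ for $b$ odd), exhibits an explicit inverse for $\pi_1$, and only then verifies the defining relations. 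Until you carry out that construction and verification, the argument has a hole precisely where the theorem's difficulty lives.
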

	
	\begin{proof}
	We prove the result for 
	$\G = \GP{2^{\beta-1}, 2^{\beta}}{-1 + 2^{\beta-2}, -3 + 2^{\beta-2}, 1 - 2^{\beta-2}, 3 + 2^{\beta-2}}$;
	the second group is the enantimorphic form of the first and will thus follow.
	The group $\G$ covers 
	$\GP{2^{\beta-1}, 4}{-1 + 2^{\beta-2}, 1, 1 - 2^{\beta-2}, -1}$, which is
	the rotation group of a tight orientably regular polyhedron of type $\{2^{\beta-1}, 4\}$,
	by \pref{regular-gp2}. Then by \lref{lifting}, all that remains is to show that
	$\s_2$ has order $2^{\beta}$. 

	We construct a permutation representation of $\G$.
	Let us define permutations $\pi_1$ and $\pi_2$ on $\Z_{2^{\beta}}$ by
	\[ b \pi_1 = \begin{cases}
		b + 2^{\beta-3} b(b-1), \textrm{ if $b$ is even,} \\
		b - 2 + 2^{\beta-3} b(b-1), \textrm{ if $b$ is odd;}
		\end{cases}
	\]
	\[ b \pi_2 = b + 1. \]
	It can be shown that $\pi_1$ is invertible, with
	\[ b \pi_1^{-1} = \begin{cases}
		b - 2^{\beta-3} b(b-1), \textrm{ if $b$ is even,} \\
		b + 2 + 2^{\beta-3} b(b+1), \textrm{ if $b$ is odd.}
	\end{cases}
	\] 
	It follows that $\pi_1$ and $\pi_2$ are well-defined permutations.

	Now we need to show that there is a well-defined homomorphism
	sending each $\s_i$ to $\pi_i$, for which it suffices to show that
	$\langle \pi_1, \pi_2 \rangle$ satisfies the defining relations of
	$\G$ when we change each $\s_i$ to $\pi_i$. The calculations here are
	a little more involved than in \tref{odd-atomics} and \tref{even-atomics},
	so we show some of the details.
	We start by calculating $\pi_1^2$ and then $\pi_1^4$. First, suppose
	that $b$ is even. If we set $b' = b \pi_1$, then we note that
	\[ 2^{\beta-3} b' = 2^{\beta-3} b + 2^{2 \beta-6} b(b-1) \equiv 2^{\beta-3} b \textrm{ (mod $2^{\beta}$) }, \]
	since $\beta \geq 5$ and $b(b-1)$ must be even. Then
	\begin{align*}
	b \pi_1^2 &= b' \pi_1 \\
	&= b' + 2^{\beta-3} b'(b'-1) \\
	&= b' + 2^{\beta-3} b(b-1) \\
	&= b + 2^{\beta-2} b(b-1). 
	\end{align*}
	A similar calculation shows that when $b$ is even, 
	\begin{equation}
	\label{eq:b-even}
	b \pi_1^4 = b + 2^{\beta-1} b(b-1) = b.
	\end{equation}
	Now, suppose that $b$ is odd instead. Again, let us set
	$b' = b \pi_1$; in this case we get that
	\[ 2^{\beta-3} b' = 2^{\beta-3}(b-2) + 2^{2 \beta-6} b(b-1) \equiv 2^{\beta-3} (b-2) \textrm{ (mod $2^{\beta}$) }. \]
	Therefore,
	\begin{align*}
	b \pi_1^2 &= b' \pi_1 \\
	&= b' - 2 + 2^{\beta-3} b'(b'-1) \\
	&= b' - 2 + 2^{\beta-3} (b-2)(b-3) \\
	&= b - 4 + 2^{\beta-3} [b(b-1) + (b-2)(b-3)]. 
	\end{align*}
	Continuing in this manner, we find that
	\begin{align}
	\label{eq:b-odd}
	\begin{split}
	b \pi_1^4 &= b - 8 + 2^{\beta-3} [b(b-1) + (b-2)(b-3) + (b-4)(b-5) + (b-6)(b-7)] \\
	&= b - 8 + 2^{\beta-3} [4b^2 - 28b + 68] \\
	&= b - 8 + 2^{\beta-1} [b^2 - 7b + 17] \\
	&= b - 8 + 2^{\beta-1}, 
	\end{split}
	\end{align}
	where the last line follows because $b^2 - 7b + 17$ is odd and we are working modulo $2^{\beta}$.
	Combining Equations \ref{eq:b-even} and \ref{eq:b-odd}, we get that
	\begin{equation}
	b \pi_1^8 = \begin{cases}
		b, \textrm{ if $b$ is even,} \\
		b - 16, \textrm{ if $b$ is odd.}
	\end{cases}
	\end{equation}
	With that, it is straightforward to check that $\langle \pi_1, \pi_2 \rangle$ does indeed
	satisfy the relations of $\G$, and so $\G$ covers $\langle \pi_1, \pi_2 \rangle$.
	Since $\pi_2$ clearly has order $2^{\beta}$, it follows that $\s_2$ does as well,
	and the result is proven.
	\end{proof}

	\subsection{Five more families of tight chiral polyhedra}
	
	Using the polyhedra from Theorems \ref{thm:odd-atomics}, \ref{thm:even-atomics}, and
	\ref{thm:even-atomics2} as our foundation, we can now lift them to construct
	more examples.

	\begin{theorem}
	\label{thm:odd-centrals}
	Let $m$ be an odd prime, let $\alpha$ and $\beta$ be integers with $\alpha \geq 1$
	and $\beta \geq 2$, and let
	$k$ be an integer satisfying $1 \leq k \leq m-1$. Then the group
	\begin{align*}
		\langle \s_1, \s_2 \mid & \s_1^{2m^{\beta}} = \s_2^{m^{\beta}} = (\s_1 \s_2)^2 = 1; \\
						  & \s_2^{-1} \s_1 = \s_1^{3+k(m+1)m^{\beta-1}} \s_2^{1+km^{\beta-1}}; \\
						  & \s_2 \s_1^{-1} = \s_1^{-3+k(m+1)m^{\beta-1}} \s_2^{-1+km^{\beta-1}}; \\
						  & \s_1^{2m} \s_2 = \s_2 \s_1^{2m} \rangle
	\end{align*}
	is the automorphism group of a tight chiral polyhedron of type $\{2 m^{\beta}, m^{\beta}\}$.
	If $\beta > \alpha$, then the group
	\begin{align*}
	\langle \s_1, \s_2 \mid & \s_1^{2m^{\alpha}} = \s_2^{m^{\beta}} = (\s_1 \s_2)^2 = 1; \\
					  & \s_2^{-1} \s_1 = \s_1^3 \s_2^{1+km^{\beta-1}}; \\
					  & \s_2 \s_1^{-1} = \s_1^{-3} \s_2^{-1+km^{\beta-1}}; \\
					  & \s_1^{2m}\s_2 = \s_2 \s_1^{2m} \rangle
	\end{align*}
	is the automorphism group of a tight chiral polyhedron of type $\{2 m^{\alpha}, m^{\beta}\}$.
	\end{theorem}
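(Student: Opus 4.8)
The plan is to realize each of the two groups as a central extension of one of the chiral polyhedra produced by \tref{odd-atomics}, lifting its structure upward along $\s_1$ in the same spirit that \lref{lifting} lifts along $\s_2$. Write $\G$ for whichever of the two groups is at hand; in both cases we must eventually show that $\s_1$ and $\s_2$ have the orders $2m^{\beta}$ (resp.\ $2m^{\alpha}$) and $m^{\beta}$ dictated by the claimed type, that $\G$ is tight with $\langle\s_1\rangle\cap\langle\s_2\rangle=\{\eps\}$, and that the resulting polyhedron is chiral.

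First I would study $N := \langle \s_1^{2m} \rangle$. The last defining relation says $\s_1^{2m}$ commutes with $\s_2$, and it trivially commutes with $\s_1$, so $N$ is central, hence normal. Passing to $\G / N$, every power of $\s_1$ whose exponent is divisible by $m^{\beta-1}$ vanishes: since $\beta \geq 2$ and $m+1$ is even we have $k(m+1)m^{\beta-1} \equiv 0 \pmod{2m}$, so the two structural relations become $\s_2^{-1}\s_1 = \s_1^{3}\s_2^{1+km^{\beta-1}}$ and $\s_2\s_1^{-1} = \s_1^{-3}\s_2^{-1+km^{\beta-1}}$, while $\s_1^{2m^{\beta}} = \eps$ and the commuting relation become consequences of $\s_1^{2m} = \eps$. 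So $\G / N$ has the presentation of $\GP{2m, m^{\beta}}{3, 1+km^{\beta-1}, -3, -1+km^{\beta-1}}$, which by \tref{odd-atomics} is the automorphism group of a tight chiral polyhedron of type $\{2m, m^{\beta}\}$ and hence has order $2m^{\beta+1}$. (For the second family this reduction is immediate, as $\s_1$ already occurs only to the powers $\pm 3$; note $\beta > \alpha \geq 1$ forces $\beta \geq 2$, so \tref{odd-atomics} applies.) Because $N$ is generated by a central power of $\s_1$, the proof of \pref{tight-crit} adapts word for word: if $w \in \G$ has image $\ol{\s_1}^{a}\ol{\s_2}^{b}$ in $\G/N$, then $w = \s_1^{a}\s_2^{b}\s_1^{2mc} = \s_1^{a+2mc}\s_2^{b}$ by centrality, so $w \in \langle\s_1\rangle\langle\s_2\rangle$; thus $\G$ is tight, and $\s_1^{2m^{\beta}} = \s_2^{m^{\beta}} = \eps$ (resp.\ $\s_1^{2m^{\alpha}} = \eps$) gives $|\G| \leq 2m^{2\beta}$ (resp.\ $2m^{\alpha+\beta}$).

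The heart of the proof, and the step I expect to be the main obstacle, is a lower bound on the order of $\s_1$. Imitating the proofs of Theorems \ref{thm:odd-atomics}--\ref{thm:even-atomics2}, I would construct permutations $\pi_1, \pi_2$ of $\Z_{2m^{\beta}}$ (resp.\ $\Z_{2m^{\alpha}}$) modelled on the action of $\G$ on the cosets of $\langle\s_2\rangle$, taking $\pi_1 \colon b \mapsto b+1$ (a single cycle of full length, since $\langle\s_1\rangle$ should act regularly there) and $\pi_2$ a suitable permutation fixing $0$, whose explicit formula --- a polynomial in $b$ with a correction term that is a multiple of $m^{\beta-1}$ (resp.\ $m^{\alpha-1}$) --- is reverse-engineered from the coset action. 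One then checks that $\pi_1, \pi_2$ satisfy every defining relation of $\G$. This is where the work lies: unlike in \tref{odd-atomics}, the $\s_1$-exponent $3 + k(m+1)m^{\beta-1}$ does not collapse modulo $2m^{\beta}$, so $\pi_2$ must be engineered to absorb precisely that discrepancy, and verifying the structural relations runs along the lines of --- but is somewhat more intricate than --- the calculation of $\pi_1^{2}$, $\pi_1^{4}$, $\pi_1^{8}$ in \tref{even-atomics2}. Granting this, $\G$ surjects onto $\langle\pi_1,\pi_2\rangle$, so $\s_1$ has order at least $2m^{\beta}$ (resp.\ $2m^{\alpha}$), and hence exactly that.

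To conclude, $\langle\s_1\rangle$ is then cyclic of order $2m^{\beta}$, so $N = \langle\s_1^{2m}\rangle$ has order $m^{\beta-1}$ and $|\G| = |\G/N| \cdot |N| = 2m^{\beta+1} \cdot m^{\beta-1} = 2m^{2\beta}$ (resp.\ $2m^{\beta+1}\cdot m^{\alpha-1} = 2m^{\alpha+\beta}$). Feeding this into tightness, $|\G| = |\langle\s_1\rangle|\,|\langle\s_2\rangle| / |\langle\s_1\rangle \cap \langle\s_2\rangle|$ with $|\langle\s_1\rangle| = 2m^{\beta}$ and $|\langle\s_2\rangle| \leq m^{\beta}$ forces $|\langle\s_2\rangle| = m^{\beta}$ and $\langle\s_1\rangle \cap \langle\s_2\rangle = \{\eps\}$. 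With $(\s_1\s_2)^2 = \eps$ this makes $\G$ the rotation group of a tight chiral or orientably regular polyhedron $\calP$ of type $\{2m^{\beta}, m^{\beta}\}$ (resp.\ $\{2m^{\alpha}, m^{\beta}\}$), by the construction recalled in Section~2. Finally $\calP$ is chiral: if it were orientably regular, $\G$ would admit an automorphism sending each $\s_i$ to $\s_i^{-1}$; this fixes $N$ setwise and so descends to an automorphism of $\G/N$ of the same form, making the polyhedron of \tref{odd-atomics} orientably regular --- a contradiction. (Alternatively one can run the chirality computation of \lref{lifting}, which here amounts to $3 + k(m+1)m^{\beta-1} \not\equiv -(-3 + k(m+1)m^{\beta-1}) \pmod{2m^{\beta}}$ for the first family and $1 + km^{\beta-1} \not\equiv -(-1 + km^{\beta-1}) \pmod{m^{\beta}}$ for the second, both valid since $m$ is odd and $1 \leq k \leq m-1$.)
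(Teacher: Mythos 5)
Your overall skeleton matches the paper's: pass to the central subgroup $N = \langle \s_1^{2m}\rangle$, identify $\G/N$ with the atomic group $\GP{2m, m^{\beta}}{3, 1+km^{\beta-1}, -3, -1+km^{\beta-1}}$ of \tref{odd-atomics}, lift tightness through the central quotient, and get chirality from the fact that $\G$ covers a tight chiral polyhedron. All of that is sound. The problem is that the one step you yourself flag as ``the heart of the proof'' --- the lower bound on the order of $\s_1$ --- is never actually carried out. You do not write down $\pi_2$, you do not verify a single relation, and you proceed with ``Granting this.'' Without that step the argument collapses: if $\s_1$ had order a proper divisor of $2m^{\beta}$ (resp.\ $2m^{\alpha}$), then $|N|$ would be smaller than claimed, $|\G|$ would fall short of $2m^{2\beta}$ (resp.\ $2m^{\alpha+\beta}$), and the polyhedron would not have the advertised type. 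So as written this is a genuine gap, located at exactly the only point where new work is needed beyond \tref{odd-atomics}.

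The gap is also easier to fill than the permutation-representation machinery you propose. For the first family the paper simply exhibits an epimorphism onto the cyclic group $\langle x \mid x^{2m^{\beta}} = 1\rangle$ sending $\s_1 \mapsto x$ and $\s_2 \mapsto x^{m^{\beta}-1}$; verifying the five defining relations in an abelian group is a short congruence computation using that $m$ is odd, and it immediately forces $\s_1$ to have order $2m^{\beta}$. For the second family, observe that in $\GP{2m^{\alpha}, m^{\beta-1}}{3,1,-3,-1}$ one has $\s_2^{-1}\s_1 = \s_1^{3}\s_2 = \s_1^{2}\s_2^{-1}\s_1^{-1}$, so $\s_1^{2}$ is central there; hence your group covers $\GP{2m^{\alpha}, m^{\beta-1}}{3,1,-3,-1}$, which by \pref{regular-gp1} is the rotation group of a tight orientably regular polyhedron of type $\{2m^{\alpha}, m^{\beta-1}\}$ (this is precisely where $\beta > \alpha$ is used), so $\s_1$ has order $2m^{\alpha}$. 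With either of these substituted for the missing permutation construction, the rest of your write-up goes through.
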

	
	\begin{proof}
	First, we show that $\s_1$ has the desired order.
	In the first case, we can check that there is an epimorphism to the 
	cyclic group $\langle x \mid x^{2m^{\beta}} = 1 \rangle$ that sends 
	$\s_1$ to $x$ and $\s_2$ to $x^{m^{\beta}-1}$. 
	In the second case, consider $\GP{2m^{\alpha}, m^{\beta-1}}{3, 1, -3, -1}$.
	In this group,
	\[ \s_2^{-1} \s_1 = \s_1^3 \s_2 = \s_1^2 \s_2^{-1} \s_1^{-1}, \]
	and it follows that $\s_1^2$ commutes with $\s_2$. So $\s_1^2$ is central, and thus
	the group given in the second case covers $\GP{2m^{\alpha}, m^{\beta-1}}{3, 1, -3, -1}$.
	By \pref{regular-gp1}, this is the group of a tight orientably regular polyhedron of type
	$\{2m^{\alpha}, m^{\beta-1}\}$. Thus in both cases, $\s_1$ has the desired order.
	
	Now, both groups have $\s_1^{2m}$ central, and they both cover $\GP{2m, m^{\beta}}{3, 1+km^{\beta-1}, -3, -1+km^{\beta-1}}$.
	By \tref{odd-atomics}, this is the group of a tight chiral polyhedron $\calQ$
	of type $\{2m, m^{\beta}\}$. The the quotient criterion \cite[Lemma 3.2]{chiral-mix} combined with \pref{tight-crit}
	implies that the given groups are the rotation groups of tight chiral or orientably regular polyhedra.
	In fact, since both polyhedra cover the tight chiral polyhedron $\calQ$,
	\pref{tight-covers-chiral} implies that the polyhedra must be chiral. Finally, it is now clear that
	$\s_2$ has order $m^{\beta}$ in each rather than a proper divisor. 
	\end{proof}

	We remark here that in the second case, it was essential to the argument that $\beta > \alpha$. The first case
	covered the possibility $\beta = \alpha$, and we will see later that if $\beta < \alpha$, then
	there are no tight chiral polyhedra of type $\{2m^{\alpha}, m^{\beta}\}$. (For example, there
	are no tight chiral polyhedra of type $\{54, 9\}$, which can be verified by looking through \cite{chiral-atlas}.)

	\begin{theorem}
	\label{thm:even-centrals}
	Let $\beta \geq \alpha + 1$, with $\alpha \geq 3$ and $\beta \geq 5$.
	Then the groups 
	\[ \GP{2^{\alpha}, 2^{\beta}}{3, 1+2^{\beta-2}, -3, -1+2^{\beta-2}} \]
	and 
	\[ \GP{2^{\alpha}, 2^{\beta}}{3, 1-2^{\beta-2}, -3, -1-2^{\beta-2}} \]
	are the automorphism
	groups of tight chiral polyhedra of type $\{2^{\alpha}, 2^{\beta}\}$.
	\end{theorem}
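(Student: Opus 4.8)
The plan is to follow the pattern of \tref{odd-centrals}, with \tref{even-atomics} and \pref{regular-gp2} playing the roles that \tref{odd-atomics} and \pref{regular-gp1} played there. It is enough to handle $\G := \GP{2^{\alpha}, 2^{\beta}}{3, 1+2^{\beta-2}, -3, -1+2^{\beta-2}}$, since the second group in the statement is its enantiomorphic form and hence has the same underlying polyhedron. Because $\G$ has the shape $\GP{p,q}{i, j_1, -i, j_2}$, \pref{gp-props2}(e) shows that $\G$ is tight; thus $\G = \langle \s_1 \rangle \langle \s_2 \rangle$ and $|\G| \leq 2^{\alpha+\beta}$.

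The first task is to pin down the orders of $\s_1$ and $\s_2$ in $\G$. Adjoining $\s_2^{2^{\alpha-1}} = 1$ to the presentation of $\G$ kills $\s_2^{2^{\beta-2}}$ (the hypothesis $\beta \geq \alpha+1$ gives $2^{\alpha-1} \mid 2^{\beta-2}$), so the exponents $1+2^{\beta-2}$ and $-1+2^{\beta-2}$ collapse to $1$ and $-1$, and the quotient is $\GP{2^{\alpha}, 2^{\alpha-1}}{3, 1, -3, -1}$. By \pref{regular-gp2} when $\alpha \geq 4$, and by the computation with GAP \cite{gap} in the proof of \tref{even-atomics} when $\alpha = 3$ (where this group is $\GP{8,4}{3,1,-3,-1}$), that group is the rotation group of a tight orientably regular polyhedron of type $\{2^{\alpha}, 2^{\alpha-1}\}$. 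Hence $\s_1$ has order exactly $2^{\alpha}$ in $\G$. On the other side, adjoining $\s_1^8 = 1$ to $\G$ leaves all the other relations intact and produces precisely $\GP{8, 2^{\beta}}{3, 1+2^{\beta-2}, -3, -1+2^{\beta-2}}$, so $\G$ covers it; by \tref{even-atomics} this is the automorphism group of a tight chiral polyhedron $\calQ$ of type $\{8, 2^{\beta}\}$, so $|\G^+(\calQ)| = 2^{\beta+3}$ and $\s_2$ has order $2^{\beta}$ in $\G$.

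It remains to put the pieces together. The kernel $K$ of the covering $\G \to \G^+(\calQ)$ satisfies $|K| = |\G|/2^{\beta+3} \leq 2^{\alpha-3}$; since $K$ contains $\langle \s_1^8 \rangle$, which has order $2^{\alpha-3}$ because $\s_1$ has order $2^{\alpha}$, we get $K = \langle \s_1^8 \rangle$, and in particular $\langle \s_1^8 \rangle$ is normal in $\G$. The quotient criterion \cite[Lemma 3.2]{chiral-mix} then shows that $\G$ is the rotation group of a tight chiral or orientably regular polyhedron $\calP$ of type $\{2^{\alpha}, 2^{\beta}\}$, and since $\calP$ covers the tight chiral polyhedron $\calQ$, \pref{tight-covers-chiral} forces $\calP$ to be chiral. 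Finally, the enantiomorphic form of $\G$ is $\GP{2^{\alpha}, 2^{\beta}}{3, 1-2^{\beta-2}, -3, -1-2^{\beta-2}}$, which is therefore also the automorphism group of a tight chiral polyhedron of type $\{2^{\alpha}, 2^{\beta}\}$.

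The step I expect to need the most care is the identification $K = \langle \s_1^8 \rangle$, since that is exactly what is needed to feed the quotient criterion: it relies both on the tightness bound $|\G| \leq 2^{\alpha+\beta}$ and on the separate verification, via \pref{regular-gp2}, that $\s_1$ does not collapse to a proper power. The rest is routine bookkeeping with the tools developed in Sections 2 and 3.
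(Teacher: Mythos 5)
Your proof is correct and uses the same ingredients as the paper's: the covering onto $\GP{2^{\alpha},2^{\alpha-1}}{3,1,-3,-1}$ (via \pref{regular-gp2}, or GAP when $\alpha=3$) to pin down the order of $\s_1$, a covering onto a known tight chiral group to pin down the order of $\s_2$, and then the quotient criterion together with \pref{tight-covers-chiral} and the enantiomorphic-form trick for the second group. The only differences are structural rather than substantive: the paper obtains the order of $\s_2$ by induction on $\alpha$, peeling off one power of $2$ at a time down to the base case of \tref{even-atomics}, whereas you cover the $\{8,2^{\beta}\}$ atomic directly (collapsing the induction), and you rederive by hand a dual form of \lref{lifting} via the kernel computation $K=\langle\s_1^8\rangle$ --- note that once $\s_2$ is known to have order $2^{\beta}$, \lref{lifting} applies verbatim with $q'=2^{\alpha-1}$ (since $1\pm 2^{\beta-2}\equiv \pm 1 \pmod{2^{\alpha-1}}$), which would have let you skip that last computation.
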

	
	\begin{proof}
	We proceed by induction on $\alpha$. \tref{even-atomics} proves the base
	case $\alpha = 3$. In the general case, the group $\GP{2^{\alpha}, 2^{\beta}}{3, 1\pm 2^{\beta-2},
	-3, -1 \pm 2^{\beta-2}}$ covers $\GP{2^{\alpha-1}, 2^{\beta}}{3, 1\pm 2^{\beta-2},
	-3, -1 \pm 2^{\beta-2}}$, which is the automorphism group of a tight chiral polyhedron
	of type $\{2^{\alpha-1}, 2^{\beta}\}$ by inductive hypothesis. It follows that $\s_2$ has
	order $2^{\beta}$. The group $\GP{2^{\alpha}, 2^{\beta}}{3, 1\pm 2^{\beta-2},
	-3, -1 \pm 2^{\beta-2}}$ also covers $\GP{2^{\alpha}, 2^{\alpha-1}}{3, 1, -3, -1}$, which
	is the rotation group of a tight orientably regular polyhedron of type
	$\{2^{\alpha}, 2^{\alpha-1}\}$, by \pref{regular-gp2}. Thus $\s_1$ has order $2^{\alpha}$.
	The result then follows from \lref{lifting}.
	\end{proof}

	In order to construct the final 3 families of tight chiral polyhedra, we will
	use the mixing construction. We start with a lemma.
	
	\begin{lemma}
	\label{lem:mixing-tight}
	Suppose that $\calP_1$ is a tight chiral polyhedron of type $\{p_1, q_1\}$
	and that $\calP_2$ is a tight chiral or orientably regular polyhedron of type $\{p_2, q_2\}$.
	Let $p$ be the least common multiple of $p_1$ and $p_2$, and let $q$ be the least common
	multiple of $q_1$ and $q_2$. If the order of $\G^+(\calP_1) \mix \G^+(\calP_2)$ is
	$pq$, then $\calP_1 \mix \calP_2$ is a tight chiral polyhedron of type $\{p, q\}$.
	\end{lemma}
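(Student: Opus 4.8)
The plan is to assemble the conclusion from results already in hand, with essentially no new computation. First I would invoke \pref{polytopal-mix}: since $\calP_1$ and $\calP_2$ are each chiral or orientably regular, the mix $\calP := \calP_1 \mix \calP_2$ is automatically a chiral or orientably regular polyhedron of type $\{p, q\}$. In particular $\calP$ is genuinely a polyhedron, and the orders of its distinguished generators $\alpha_1$ and $\alpha_2$ are exactly $p$ and $q$, so there is no worry about the type collapsing to a proper divisor.

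Next I would establish tightness. Since $\calP$ is a chiral or orientably regular polyhedron, its number of flags equals $2|\G^+(\calP)| = 2|\G^+(\calP_1) \mix \G^+(\calP_2)|$, which by hypothesis is $2pq$. A polyhedron of type $\{p, q\}$ has at least $2pq$ flags, so $\calP$ attains this minimum and is therefore tight.

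Finally I would rule out the regular case. The projection of $\G^+(\calP_1) \times \G^+(\calP_2)$ onto its first coordinate restricts to a surjective homomorphism $\G^+(\calP_1) \mix \G^+(\calP_2) \to \G^+(\calP_1)$ carrying $\alpha_1 \mapsto \s_1$ and $\alpha_2 \mapsto \s_2$, so $\calP$ covers $\calP_1$. Since $\calP_1$ is a tight chiral polyhedron and $\calP$ is a tight chiral or orientably regular polyhedron covering it, \pref{tight-covers-chiral} forces $\calP$ to be chiral, completing the proof.

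There is no substantial obstacle here; the only points requiring a moment's attention are recognizing that tightness is equivalent to the flag count being exactly $2pq$ (so that the hypothesis $|\G^+(\calP_1) \mix \G^+(\calP_2)| = pq$ is precisely what is needed), and that the coordinate projection supplies the covering of $\calP_1$ needed to eliminate the orientably regular possibility.
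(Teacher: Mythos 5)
Your proposal is correct and follows essentially the same route as the paper's own proof: apply \pref{polytopal-mix} to get a polyhedron of type $\{p,q\}$, use the order hypothesis to conclude the flag count is $2pq$ and hence tightness, and invoke \pref{tight-covers-chiral} via the covering of $\calP_1$ to rule out regularity. The only difference is that you spell out the coordinate projection furnishing the cover of $\calP_1$, which the paper leaves implicit.
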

	
	\begin{proof}
	Let $\calQ = \calP_1 \mix \calP_2$. By \pref{polytopal-mix}, $\calQ$ is a polyhedron
	of type $\{p, q\}$. Furthermore, 
	\[ |\G^+(\calQ)| = |\G^+(\calP_1) \mix \G^+(\calP_2)| = pq, \]
	by supposition. So $\calQ$ has $2pq$ flags and is thus tight. Finally, since $\calQ$ is
	tight and it covers the chiral polyhedron $\calP_1$, it follows from \pref{tight-covers-chiral}
	that $\calQ$ is itself chiral.
	\end{proof}

	Using \lref{mixing-tight}, the proofs of the remaining three theorems will be reduced
	to demonstrating that the mix of two chosen groups has the right size.
	
	\begin{theorem}
	\label{thm:even-centrals2}
	For each $\alpha \geq 5$, the mix of
	\[ \G_1 = \GP{2^{\alpha-1}, 2^{\alpha}}{-1 + 2^{\alpha-2}, -3 + 2^{\alpha-2}, 1 + 2^{\alpha-2}, 3 + 2^{\alpha-2}}, \]
	with
	\[ \G_2 = \GP{2^{\alpha}, 8}{-1 + 2^{\alpha-2}, -3, 1 + 2^{\alpha-2}, 3} \]
	is the automorphism group of a tight chiral polyhedron of type $\{2^{\alpha}, 2^{\alpha}\}$.
	\end{theorem}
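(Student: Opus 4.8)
The plan is to apply \lref{mixing-tight}, so the whole argument reduces to identifying $\G_1$ and $\G_2$ as automorphism groups of known tight chiral polyhedra and then computing the size of their mix. Up to replacing the $\s_1$-exponent $1+2^{\alpha-2}$ by $1-2^{\alpha-2}$ (which agree modulo $2^{\alpha-1}$), $\G_1$ is the first group of \tref{even-atomics2} with $\beta=\alpha$, so it is the automorphism group of a tight chiral polyhedron $\calP_1$ of type $\{2^{\alpha-1},2^{\alpha}\}$; in particular $|\G_1|=2^{2\alpha-1}$ and $\s_2$ has order $2^{\alpha}$ in $\G_1$. Dually, the dual of $\G_2$ is $\GP{8,2^{\alpha}}{3,1+2^{\alpha-2},-3,-1+2^{\alpha-2}}$, which is the second group of \tref{even-atomics} with $\beta=\alpha$, so $\G_2$ is the automorphism group of a tight chiral polyhedron $\calP_2$ of type $\{2^{\alpha},8\}$, with $|\G_2|=2^{\alpha+3}$. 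Since $\alpha\geq 5\geq 3$, we have $\lcm(2^{\alpha-1},2^{\alpha})=2^{\alpha}$ and $\lcm(2^{\alpha},8)=2^{\alpha}$, so by \lref{mixing-tight} it suffices to prove that $|\G_1\mix\G_2|=2^{\alpha}\cdot 2^{\alpha}=2^{2\alpha}$.

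By \pref{mix-size} this is equivalent to $|\G_1\comix\G_2|=2^{\alpha+2}$, and I would compute the comix directly. Imposing the defining relations of $\G_2$ on $\G_1$: the relation $\s_1^{2^{\alpha}}=1$ is implied by $\s_1^{2^{\alpha-1}}=1$; the relation $(\s_1\s_2)^2=1$ already holds; and, using the corresponding relations already present in $\G_1$, each of $\s_2^{-1}\s_1=\s_1^{-1+2^{\alpha-2}}\s_2^{-3}$ and $\s_2\s_1^{-1}=\s_1^{1+2^{\alpha-2}}\s_2^{3}$ collapses to $\s_2^{2^{\alpha-2}}=1$, which for $\alpha\geq 5$ is a consequence of $\s_2^{8}=1$. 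Thus the only genuinely new relation is $\s_2^{8}=1$, so $\G_1\comix\G_2=\G_1/\langle\s_2^{8}\rangle$ once $\langle\s_2^{8}\rangle$ is shown to be normal in $\G_1$.

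For normality I would use \pref{gp-props2}(a): the two $\s_1$-exponents of $\G_1$ are negatives of one another modulo $2^{\alpha-1}$, so $\langle\s_2^{-4+2^{\alpha-2}}\rangle$ is normal; since $-4+2^{\alpha-2}=4(2^{\alpha-4}-1)$ with $2^{\alpha-4}-1$ odd, this equals $\langle\s_2^{4}\rangle$, a cyclic normal subgroup of order $2^{\alpha-2}$ (as $\s_2$ has order $2^{\alpha}$), and $\langle\s_2^{8}\rangle$ is its unique index-$2$ subgroup, hence characteristic in it and so normal in $\G_1$. Then $|\langle\s_2^{8}\rangle|=2^{\alpha-3}$, so $|\G_1\comix\G_2|=2^{2\alpha-1}/2^{\alpha-3}=2^{\alpha+2}$, whence $|\G_1\mix\G_2|=2^{2\alpha-1}\cdot 2^{\alpha+3}/2^{\alpha+2}=2^{2\alpha}$, and \lref{mixing-tight} yields that $\calP_1\mix\calP_2$ is a tight chiral polyhedron of type $\{2^{\alpha},2^{\alpha}\}$.

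The main obstacle is the comix computation: one must check carefully that the relations of $\G_2$ add nothing to $\G_1$ beyond $\s_2^{8}=1$ (the cross-terms $\s_2^{-1}\s_1$ and $\s_2\s_1^{-1}$ are where an error would most easily creep in), and then pin down the normal structure so that $|\G_1/\langle\s_2^{8}\rangle|$ is immediate. As a cross-check, note that $\G_1\comix\G_2$ has the form $\GP{p,q}{i,j_1,-i,j_2}$ and so is tight by \pref{gp-props2}(e), giving $|\G_1\comix\G_2|\leq 2^{\alpha-1}\cdot 8=2^{\alpha+2}$ for free; the direct quotient description then supplies the matching lower bound.
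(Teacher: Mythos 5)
Your proposal is correct and follows the same overall skeleton as the paper: identify $\G_1$ via \tref{even-atomics2} and $\G_2$ as the dual of a group from \tref{even-atomics}, reduce everything to \lref{mixing-tight}, and compute $|\G_1 \comix \G_2| = 2^{\alpha+2}$ so that \pref{mix-size} gives $|\G_1 \mix \G_2| = 2^{2\alpha}$. The one place where you genuinely diverge is the evaluation of the comix. The paper identifies $\G_1 \comix \G_2$ by inspection as $\GP{2^{\alpha-1},8}{-1+2^{\alpha-2},-3,1+2^{\alpha-2},3}$, notes via \pref{gp-props2}(a) that $\langle \s_2^4\rangle$ is normal there with quotient the tight regular group $\GP{2^{\alpha-1},4}{-1+2^{\alpha-2},1,1+2^{\alpha-2},-1}$ of \pref{regular-gp2}, and invokes \pref{tight-crit} to conclude the comix is tight of order $2^{\alpha+2}$; strictly speaking tightness only bounds the comix above by $2^{\alpha+2}$, and the matching lower bound is left implicit. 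You instead verify that the only genuinely new relator is $\s_2^8$, prove $\langle \s_2^8\rangle$ is normal in $\G_1$ (via \pref{gp-props2}(a) applied to $\G_1$ itself, plus the characteristic-subgroup observation), and read off $|\G_1 \comix \G_2| = |\G_1|/|\langle \s_2^8\rangle| = 2^{\alpha+2}$ exactly. This is slightly more work but delivers both bounds at once, so your argument is, if anything, a touch more airtight on the lower-bound side; your closing ``cross-check'' via \pref{gp-props2}(e) is essentially the paper's upper-bound argument recovered as a sanity check.
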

	
	\begin{proof}
	By \tref{even-atomics2}, $\G_1$ is the automorphism group of a tight chiral polyhedron of type
	$\{2^{\alpha-1}, 2^{\alpha}\}$, and by the dual of \tref{even-atomics}, $\G_2$ is the automorphism
	group of a tight chiral polyhedron of type $\{2^{\alpha}, 8\}$. In light of \lref{mixing-tight},
	all we need to show is that $\G_1 \mix \G_2$ has order $2^{2\alpha}$.
	To do so, we examine $\G_1 \comix \G_2$. Inspection of the presentations makes it clear that
	\[ \G_1 \comix \G_2 = \GP{2^{\alpha-1}, 8}{-1+2^{\alpha-2}, -3, 1+2^{\alpha-2}, 3}. \]
	By \pref{gp-props2}(a), the subgroup $\langle \s_2^4 \rangle$ is normal, and so $\G_1 \comix \G_2$
	covers $\GP{2^{\alpha-1}, 4}{-1+2^{\alpha-2}, 1, 1+2^{\alpha-2}, -1}$. By \pref{regular-gp2}, this
	latter is the rotation group of a tight orientably regular polyhedron of type $\{2^{\alpha-1}, 4\}$.
	It follows from \pref{tight-crit} that $\G_1 \comix \G_2$ is tight, and thus has order $2^{\alpha+2}$.
	Since $|\G_1| = 2^{2\alpha-1}$ and $|\G_2| = 2^{\alpha+3}$, the result then follows from \pref{mix-size}.
	\end{proof}
	
	\begin{theorem}
	\label{thm:odd-chirals}
	Let $m$ be an odd prime, and suppose that $r$ and $s$ are positive integers such that either
	$s$ is even, or $s$ is odd and $r$ divides $sm$. 
	Then there is a tight chiral polyhedron of type $\{2rm, s m^2\}$.
	\end{theorem}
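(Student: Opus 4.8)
The plan is to build the desired polyhedron as a mix of one of the "atomic" tight chiral polyhedra of type $\{2m, m^2\}$ from \tref{odd-atomics} (taking $\beta = 2$) with a suitable tight regular polyhedron, and then invoke \lref{mixing-tight}. Concretely, let $\calP_1$ be the tight chiral polyhedron of type $\{2m, m^2\}$ with group $\GP{2m, m^2}{3, 1+km, -3, -1+km}$ for some fixed $k$ with $1 \le k \le m-1$. For the second factor I would choose a tight \emph{orientably regular} polyhedron $\calP_2$ whose type $\{p_2, q_2\}$ is engineered so that $\lcm(2m, p_2) = 2rm$ and $\lcm(m^2, q_2) = sm^2$. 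Since \pref{polytopal-mix} already guarantees $\calP_1 \mix \calP_2$ has the right type, everything reduces to checking the size condition $|\G^+(\calP_1) \mix \G^+(\calP_2)| = (2rm)(sm^2)$, which by \pref{mix-size} is equivalent to $|\G^+(\calP_1) \comix \G^+(\calP_2)| = \frac{|\G^+(\calP_1)|\,|\G^+(\calP_2)|}{(2rm)(sm^2)}$.

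The case split in the hypothesis ($s$ even, versus $s$ odd with $r \mid sm$) tells us which regular factor to use. When $s$ is even, both $p_2$ and $q_2$ can be taken even, so \pref{regular-gp3} supplies a tight orientably regular polyhedron $\GP{p_2, q_2}{-1,1,1,-1}$ of type $\{p_2, q_2\}$ for \emph{any} even $p_2, q_2$; I would pick $p_2$ and $q_2$ to be the appropriate products of $2$-parts and prime powers so the lcm's come out to $2rm$ and $sm^2$. When $s$ is odd and $r \mid sm$, I would instead use \pref{regular-gp1}: since $sm^2$ is odd and $2rm$ is an even divisor of $2(sm^2)$ (this is exactly where $r \mid sm$ is needed), there is a unique tight orientably regular polyhedron of type $\{2rm, sm^2\}$ with group $\GP{2rm, sm^2}{3,1,-3,-1}$, and I would mix $\calP_1$ with \emph{that} (so $p = 2rm$, $q = sm^2$ already). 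In this subcase the comix computation should be quite clean because both groups have the same "$3,1,-3,-1$-flavored" relations modulo the prime-power congruence terms.

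For the size estimate I would compute $\G^+(\calP_1) \comix \G^+(\calP_2)$ directly from the presentations: adjoining the relations of one group to the other collapses the common generators, and the resulting group should be recognizable as a $\GP{\cdot,\cdot}{\cdot}$ group (or a small central extension of one) of the form already handled by \pref{gp-props2}, \pref{regular-gp1}, or \pref{regular-gp2}. I would then bound its order from above by showing it is tight (via \pref{tight-crit}, killing an appropriate $\langle \s_2^{q'}\rangle$ to land on a known tight regular group) and from below by exhibiting an epimorphism onto a group of the claimed order — typically an abelian or cyclic quotient as in the proof of \tref{odd-centrals}. Matching these two bounds pins down $|\G^+(\calP_1)\comix\G^+(\calP_2)|$ exactly, and then \pref{mix-size} and \lref{mixing-tight} finish the proof, with chirality automatic because $\calP_1$ is chiral.

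The main obstacle I anticipate is the bookkeeping in the comix: one must choose the regular factor's type $\{p_2, q_2\}$ so that, after imposing its relations, the prime-power congruence corrections $km^{\beta-1}$ in the relations of $\calP_1$ do not survive in a way that inflates the comix beyond the predicted size. Getting the $2$-part of $p_2$ and the $m$-part of $q_2$ exactly right — so that $\lcm$'s hit $2rm$ and $sm^2$ on the nose while the comix stays as small as \pref{mix-size} demands — is the delicate step; the rest is routine application of the earlier propositions.
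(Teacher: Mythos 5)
There is a genuine gap: the chiral factor you propose is too small. You fix $\calP_1$ of type $\{2m, m^2\}$ with $|\G^+(\calP_1)| = 2m^3$ and hope to reach type $\{2rm, sm^2\}$ by mixing with a tight regular $\calP_2$ of type $\{p_2, q_2\}$. By \pref{mix-size}, the mix is tight exactly when $|\G^+(\calP_1) \comix \G^+(\calP_2)| = \frac{2m^3\, p_2 q_2}{\lcm(2m,p_2)\,\lcm(m^2,q_2)} = \gcd(2m,p_2)\cdot\gcd(m^2,q_2)$. Whenever $m$ divides $r$ or $s$, the factor $p_2$ or $q_2$ must carry a power of $m$ at least $m^2$ to make the lcm's come out right, and then this required comix order is $2m^3 = |\G^+(\calP_1)|$ --- i.e., the regular group $\G^+(\calP_2)$ would have to cover $\G^+(\calP_1)$. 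That is impossible: a tight regular polyhedron cannot cover a tight chiral one (\pref{tight-covers-chiral}), and concretely the relation $\s_2^{-1}\s_1 = \s_1^3\s_2$ from $\G^+(\calP_2)$ together with $\s_2^{-1}\s_1 = \s_1^3\s_2^{1+km}$ from $\G^+(\calP_1)$ forces $\s_2^{km}=\eps$ in the comix, collapsing it to order at most $2m^2$. The mix then overshoots $(2rm)(sm^2)$ by a factor of at least $m$ and is not tight. Your explicit suggestion for the $s$-odd case --- mixing with $\GP{2rm, sm^2}{3,1,-3,-1}$ --- fails for exactly this reason. So the obstacle you flag as ``bookkeeping'' is in fact an obstruction that no choice of regular factor can circumvent.

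The missing ingredient is the lifted families of \tref{odd-centrals}: tight chiral polyhedra of types $\{2m^{\alpha+1}, m^{\beta+2}\}$ that already carry the \emph{entire} $m$-part of the target Schl\"afli symbol (their construction requires a separate argument, centralizing $\s_1^{2m}$ and bootstrapping off the atomic polyhedra). The paper's proof takes such a polyhedron as the chiral factor and mixes it with a regular polyhedron $\GP{2r', s'}{-1,1,1,-1}$ or $\GP{2r',s'}{3,1,-3,-1}$ whose type is \emph{coprime to} $m$; the comix then genuinely collapses to order $2$ and \pref{mix-size} gives tightness. (A further case, $s$ even with the $m$-part of $r$ exceeding that of $sm$ by two or more, requires passing to the dual, which your outline also does not account for.) Your architecture --- mix, control the comix, conclude via \lref{mixing-tight} --- matches the paper, but it only succeeds when $r$ and $s$ are coprime to $m$; the general case needs the intermediate chiral families rather than the atomic $\{2m, m^2\}$ polyhedron.
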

	
	\begin{proof}
	Let us write $r = m^{\alpha} r'$ and $s = m^{\beta} s'$, with $r'$ and $s'$ both coprime to $m$.
	We consider 4 cases. \\
	
	\par\noindent \textbf{Case 1}: $s$ is even and $\beta \geq \alpha$. Let
	\[ \G_1 = \GP{2m^{\alpha+1}, m^{\beta+2}}{3, 1+km^{\beta+1}, -3, -1+km^{\beta+1}} / \langle \s_1^{2m} \s_2 \s_1^{-2m} \s_2^{-1} \rangle, \]
	which by \tref{odd-centrals} is the automorphism group of a tight chiral polyhedron of type $\{2m^{\alpha+1}, m^{\beta+2}\}$ (note that $\alpha$ and $\beta$ here are shifted relative to the statement of \tref{odd-centrals}).
	Let $\G_2 = \GP{2r', s'}{-1, 1, 1, -1}$, which is the rotation group of a tight orientably regular polyhedron
	of type $\{2r', s'\}$ by \pref{regular-gp3}. Then inspection of the presentations shows that
	$\G_1 \comix \G_2 = \GP{2,1}{1,1,1,1}$, which has order 2. Then by \pref{mix-size},
	\[ |\G_1 \mix \G_2| = \frac{|\G_1| \cdot |\G_2|}{2} = \frac{(2m^{\alpha+\beta+3})(2r's')}{2} = (2rm)(sm^2). \]
	Then \lref{mixing-tight} implies that $\G_1 \mix \G_2$ is the automorphism group of a tight chiral polyhedron
	of type $\{2rm, sm^2\}$. \\
	
	\par\noindent \textbf{Case 2}: $s$ is even and $\beta = \alpha-1$. We set
	\[ \G_1 = \GP{2m^{\alpha+1}, m^{\beta+2}}{3+k(m+1)m^{\beta+1}, 1+km^{\beta+1}, -3+k(m+1)m^{\beta+1}, -1+km^{\beta+1}} / \langle \s_1^{2m} \s_2 \s_1^{-2m} \s_2^{-1} \rangle, \]
	which \tref{odd-centrals} says is the automorphism group of a tight chiral polyhedron of type
	$\{2m^{\alpha+1}, m^{\beta+2}\}$. As in case 1, we choose $\G_2 = \GP{2r', s'}{-1, 1, -1, 1}$, and the rest
	of the argument works without modification. \\
	
	\par\noindent \textbf{Case 3}: $s$ is even and $\beta \leq \alpha-2$. Set $t = m^{\beta+1} s'/2$ and $u = 2m^{\alpha-1}r'$.
	Then since $u$ is even and $\alpha-1 \geq \beta+1$, the first case implies that there is a tight chiral polyhedron of type
	$\{2tm, um^2\}$. Taking the dual, we get there is a tight chiral polyhedron of type $\{um^2, 2tm\}$.
	Since $um^2 = 2m^{\alpha+1}r' = 2rm$ and $2tm = m^{\beta+2}s' = sm^2$, we get a polyhedron of type $\{2rm, sm^2\}$. \\
	
	\par\noindent \textbf{Case 4}: $s$ is odd and $r$ divides $sm$. Then $\beta+1 \geq \alpha$ and $r'$ divides $s'$. If $\beta+1 > \alpha$
	then we choose $\G_1$ as in case 1; if $\beta+1 = \alpha$ then we choose $\G_1$ as in case 2. In any case,
	$\G_1$ is the automorphism group of a tight chiral polyhedron of type $\{2m^{\alpha+1}, m^{\beta+2}\}$.
	For $\G_2$ we choose $\GP{2r', s'}{3, 1, -3, -1}$; since $r'$ divides $s'$, \pref{regular-gp1} says
	that $\G_2$ is the automorphism group of a tight orientably regular polyhedron of type $\{2r', s'\}$.
	Then $\G_1 \comix \G_2 = \GP{2,1}{1,1,1,1}$, which has order 2. We finish the argument as in case 1,
	showing that $\G_1 \mix \G_2$ gives a tight chiral polyhedron of the desired type.
	\end{proof}
	
	\begin{theorem}
	\label{thm:even-chirals}
	For any positive integers $r$ and $s$, there is a tight chiral polyhedron of type $\{8r, 32s\}$.
	\end{theorem}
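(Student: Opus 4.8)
The plan is to build the desired polyhedron as the mix of a power-of-two tight chiral polyhedron with a tight orientably regular polyhedron carrying the odd part of the Schl\"afli symbol, exactly as in the proof of \tref{odd-chirals}. Write $r = 2^a r'$ and $s = 2^b s'$ with $r'$ and $s'$ odd. First I would produce a tight chiral polyhedron $\calP_1$ of type $\{2^{a+3}, 2^{b+5}\}$. Since $a+3 \geq 3$ and $b+5 \geq 5$, there are three cases according to how $a+3$ and $b+5$ compare: if $b+5 \geq (a+3)+1$, then \tref{even-centrals} applies directly with $\alpha = a+3$, $\beta = b+5$; if $b+5 = a+3$ (so $a+3 = b+5 \geq 5$), then \tref{even-centrals2} applies with $\alpha = a+3$; and if $b+5 \leq (a+3)-1$, then \tref{even-centrals} applies to the type $\{2^{b+5}, 2^{a+3}\}$ and I take the dual. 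In each case one checks the side conditions $\alpha \geq 3$, $\beta \geq 5$, $\beta \geq \alpha+1$ hold, so $\calP_1$ exists, and $\G_1 := \G^+(\calP_1)$ has order $2^{a+3} \cdot 2^{b+5} = 2^{a+b+8}$.

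Next, let $\G_2 = \GP{2r', 2s'}{-1,1,1,-1}$, which by \pref{regular-gp3} is the rotation group of a tight orientably regular polyhedron $\calP_2$ of type $\{2r', 2s'\}$. Because $r'$ and $s'$ are odd, $\lcm(2^{a+3}, 2r') = 2^{a+3}r' = 8r$ and $\lcm(2^{b+5}, 2s') = 2^{b+5}s' = 32s$, so by \pref{polytopal-mix} the mix $\calP_1 \mix \calP_2$ has type $\{8r, 32s\}$. By \lref{mixing-tight}, it then remains only to verify that $|\G_1 \mix \G_2| = (8r)(32s) = 2^{a+b+8} r' s'$.

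To do this I would compute $\G_1 \comix \G_2$ and invoke \pref{mix-size}. Adjoining the relations of $\G_2$ to $\G_1$ forces $\s_1^{2r'} = \s_2^{2s'} = \eps$; combined with $\s_1^{2^{a+3}} = \s_2^{2^{b+5}} = \eps$ and the fact that $r', s'$ are odd, this collapses $\s_1$ and $\s_2$ to involutions, and then $(\s_1 \s_2)^2 = \eps$ makes them commute, so $\G_1 \comix \G_2$ is a quotient of $\Z_2 \times \Z_2$. Conversely, in each of the three cases for $\calP_1$ the distinguished exponents $i_1, j_1, i_2, j_2$ in the presentation of $\G_1$ (or of a group onto which $\G_1$ projects, in the \tref{even-centrals2} case) are all odd, so the assignment $\s_1 \mapsto (1,0)$, $\s_2 \mapsto (0,1)$ satisfies every defining relation of $\G_1$, exhibiting a surjection $\G_1 \to \Z_2 \times \Z_2$; since $\Z_2 \times \Z_2$ also satisfies the relations of $\G_2$, this surjection factors through $\G_1 \comix \G_2$. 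Hence $|\G_1 \comix \G_2| = 4$, and \pref{mix-size} gives $|\G_1 \mix \G_2| = |\G_1| \cdot |\G_2| / 4 = 2^{a+b+8} \cdot 4 r' s' / 4 = 2^{a+b+8} r' s'$, as required. Then \lref{mixing-tight} finishes the proof.

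The main obstacle I anticipate is the bookkeeping in the first step: confirming that a tight chiral polyhedron of every power-of-two type $\{2^c, 2^d\}$ with $c \geq 3$ and $d \geq 5$ really is delivered by \tref{even-centrals}, \tref{even-centrals2}, and duality, with all the constraints on $\alpha$ and $\beta$ met, together with the routine but slightly fiddly check that the relevant exponents are odd so that $\G_1$ surjects onto $\Z_2 \times \Z_2$. The comix bound $|\G_1 \comix \G_2| \leq 4$ is the only genuinely conceptual point, and it is short once one observes that two commuting involutions $\s_1, \s_2$ with $(\s_1 \s_2)^2 = \eps$ generate a quotient of $\Z_2 \times \Z_2$.
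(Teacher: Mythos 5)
Your proposal is correct and follows essentially the same route as the paper's proof: the same three-way case split on how $2^{a+3}$ and $2^{b+5}$ compare, the same choices of $\G_1$ (from \tref{even-centrals}, its dual, and \tref{even-centrals2}) and $\G_2 = \GP{2r', 2s'}{-1,1,1,-1}$, and the same size computation via $\G_1 \comix \G_2$, \pref{mix-size}, and \lref{mixing-tight}. The only cosmetic difference is that in the equal-exponent case you pin down $|\G_1 \comix \G_2| = 4$ directly from the relations, whereas the paper settles for the lower bound $|\G_1 \comix \G_2| \geq 4$ and recovers the reverse inequality from polytopality of the mix.
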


	\begin{proof}
	Let us write $r = 2^{\alpha} r'$ and $s = 2^{\beta} s'$, with $r'$ and $s'$ both odd.
	So our goal is to find a tight chiral polyhedron of type $\{2^{\alpha+3}r', 2^{\beta+5}s'\}$.
	We consider 3 cases. \\

	\par\noindent \textbf{Case 1}: $\alpha + 3 < \beta + 5$. We set
	\[ \G_1 = \GP{2^{\alpha+3}, 2^{\beta+5}}{3, 1+2^{\beta-2}, -3, -1+2^{\beta-2}}. \]
	Then by \tref{even-centrals}, $\G_1$ is the automorphism group of a tight chiral polyhedron
	of type $\{2^{\alpha+3}, 2^{\beta+5}\}$. We pick $\G_2 = \GP{2r', 2s'}{-1, 1, 1, -1}$, which by
	\pref{regular-gp3} is the automorphism group of a tight orientably regular polyhedron of type
	$\{2r', 2s'\}$. Then $\G_1 \comix \G_2 = \GP{2, 2}{1, 1, 1, 1}$, which has order 4.
	Then \pref{mix-size} shows that $|\G_1 \mix \G_2| = (2^{\alpha+3}r')(2^{\beta+5} s') = (8r)(32s)$,
	and the result follows from \lref{mixing-tight}. \\
	
	\par\noindent \textbf{Case 2}: $\alpha+3 > \beta+5$. In this case, we pick $\G_1$ to be the dual of what
	we chose for $\G_1$ in case 1. The rest of the argument from case 1 carries through
	without modification. \\
	
	\par\noindent \textbf{Case 3}: $\alpha+3 = \beta+5$. Now we choose $\G_1$ to be the mix of
	\[ \GP{2^{\alpha+2}, 2^{\alpha+3}}{-1 + 2^{\alpha+1}, -3 + 2^{\alpha+1}, 1 + 2^{\alpha+1}, 3 + 2^{\alpha+1}}, \]
	with
	\[ \GP{2^{\alpha+3}, 8}{-1 + 2^{\alpha+1}, -3, 1 + 2^{\alpha+1}, 3}, \]
	as in \tref{even-centrals2}. We again choose $\G_2 = \GP{2r', 2s'}{-1, 1, 1, -1}$.
	If we can show that $\G_1 \comix \G_2$ has order 4, then the result will follow, as in the previous
	cases. In fact, it suffices to show that $|\G_1 \comix \G_2| \geq 4$; that will show (using
	\pref{mix-size}) that $|\G_1 \mix \G_2| \leq (8r)(32s)$. Since we know that $\G_1 \mix \G_2$
	is the rotation group of a polyhedron of type $\{8r, 32s\}$, it follows that
	$|\G_1 \mix \G_2| \geq (8r)(32s)$, and the result will then follow from \lref{mixing-tight}.
	Since $\G_1$ covers $\G_3 := \GP{2^{\alpha+3}, 8}{-1 + 2^{\alpha+1}, -3, 1 + 2^{\alpha+1}, 3}$,
	it follows that $\G_1 \comix \G_2$ covers $\G_3 \comix \G_2$, which is $\GP{2, 2}{1, 1, 1, 1}$
	of order 4. The result now follows.
	\end{proof}

	We summarize with the following theorem:
	
	\begin{theorem}
	\label{thm:existence}
	There is a tight chiral polyhedron of type $\{p, q\}$ and one of type $\{q, p\}$ under any of the following conditions:
	\begin{enumerate}
	\item $q$ is odd, $p$ is an even divisor of $2q$, and there is an odd prime $m$ such that
	$m$ divides $p$ and $m^2$ divides $q$.
	\item $p$ and $q$ are both even, and there is an odd prime $m$ such that
	$m$ divides $p$ and $m^2$ divides $q$.
	\item $p$ is divisible by 8 and $q$ is divisible by 32.
	\end{enumerate}
	\end{theorem}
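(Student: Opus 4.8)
The plan is to read this statement as a corollary that packages the existence results already proved, namely \tref{odd-chirals} and \tref{even-chirals}, and to obtain the ``type $\{q,p\}$'' half in every case by passing to the dual. Recall that the dual of a polyhedron of type $\{p,q\}$ has type $\{q,p\}$, has the same number of flags, and has an automorphism group acting on flags in the obvious corresponding way; in particular the dual of a tight chiral polyhedron is again a tight chiral polyhedron. So it suffices, under each of the three hypotheses, to exhibit a single tight chiral polyhedron of type $\{p,q\}$.

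For condition (c), write $p = 8r$ and $q = 32s$ with $r$ and $s$ positive integers. Then \tref{even-chirals} immediately furnishes a tight chiral polyhedron of type $\{8r, 32s\} = \{p, q\}$, and its dual handles $\{q,p\}$.

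For conditions (a) and (b), fix the odd prime $m$ with $m \mid p$ and $m^2 \mid q$, and set $r = p/(2m)$ and $s = q/m^2$. Since $p$ is even and $m$ is odd, we have $2m \mid p$, so $r$ is a positive integer; and $s$ is a positive integer because $m^2 \mid q$. By construction $2rm = p$ and $sm^2 = q$, so it only remains to check the hypothesis of \tref{odd-chirals}. Under condition (b), $q$ is even and $m^2$ is odd, so $s$ is even and \tref{odd-chirals} applies directly. Under condition (a), $q$ is odd, hence so is $s$, and we must instead verify that $r$ divides $sm$: here $sm = q/m$ and $r = p/(2m)$, so $sm/r = 2q/p$, which is an integer precisely because $p$ is a divisor of $2q$ --- exactly the standing hypothesis of (a). In either case \tref{odd-chirals} yields a tight chiral polyhedron of type $\{2rm, sm^2\} = \{p,q\}$, and its dual gives type $\{q,p\}$.

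The whole argument is essentially bookkeeping with the parameter substitutions; the only place that requires a moment's care is the translation, in condition (a), of ``$r$ divides $sm$'' into ``$p$ divides $2q$'', together with the elementary observation that $2m \mid p$ whenever $p$ is even and the odd prime $m$ divides $p$. I do not anticipate any genuine obstacle beyond keeping these reductions straight and citing duality correctly.
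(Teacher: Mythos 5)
Your proposal is correct and matches the paper's (implicit) argument: the paper states this theorem as a summary of \tref{odd-chirals} and \tref{even-chirals} with no written proof, and your parameter substitutions $r = p/(2m)$, $s = q/m^2$ together with the duality observation are exactly the bookkeeping being left to the reader. The one step needing care --- translating ``$p$ is an even divisor of $2q$'' into the hypothesis ``$r$ divides $sm$'' of \tref{odd-chirals} --- is handled correctly.
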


	Our goal in the following sections will be
	to establish that this is a complete description of the Schl\"afli symbols of tight chiral polyhedra.
	
\section{Atomic Chiral Polyhedra}

	\subsection{Quotients of tight chiral polyhedra}

	We now return to first principles to
	study the structure of tight chiral polyhedra in general. The following simple result
	underlies all of our structure theory.

	\begin{proposition}
	\label{prop:normal-props}
	Suppose $\calP$ is a tight chiral polyhedron of type $\{p, q\}$, with $\G^+(\calP) = \langle \s_1, \s_2 \rangle$.
	If $q \geq p$, then there is an integer $q'$ dividing $q$ and with $2 \leq q' < p$ such that $\langle \s_2^{q'} \rangle$
	is normal in $\G^+(\calP)$.
	\end{proposition}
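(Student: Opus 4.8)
The plan is to locate the largest normal subgroup of $\G^+(\calP)$ that is contained in $\langle\s_2\rangle$ and to show that its index in $\langle\s_2\rangle$ satisfies the required inequalities; the whole statement reduces to this. Write $G=\G^+(\calP)$. By tightness, $G=\langle\s_1\rangle\langle\s_2\rangle$, $|G|=pq$, $\s_1$ has order $p$, $\s_2$ has order $q$, and $\langle\s_1\rangle\cap\langle\s_2\rangle=\{\eps\}$. Since $\langle\s_2\rangle\le N_G(\langle\s_2\rangle)$ and $G=\langle\s_2\rangle\langle\s_1\rangle$, we get $G=N_G(\langle\s_2\rangle)\langle\s_1\rangle$, so every conjugate of $\langle\s_2\rangle$ has the form $\s_1^{-i}\langle\s_2\rangle\s_1^{i}$ for some $i$, and the core $C:=\bigcap_{i}\s_1^{-i}\langle\s_2\rangle\s_1^{i}$ is the largest normal subgroup of $G$ inside $\langle\s_2\rangle$. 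As $C$ is a subgroup of the cyclic group $\langle\s_2\rangle$, we have $C=\langle\s_2^{q'}\rangle$ for a unique $q'\mid q$, and every normal subgroup of $G$ contained in $\langle\s_2\rangle$ is a subgroup of $C$; hence it suffices to prove $2\le q'<p$ for this particular $q'$.

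For $q'\ge2$: if $q'=1$ then $\langle\s_2\rangle\trianglelefteq G$, so $\s_1^{-1}\s_2\s_1\in\langle\s_2\rangle$; but $(\s_1\s_2)^2=\eps$ gives $\s_1^{-1}\s_2\s_1=\s_1^{-2}\s_2^{-1}$, forcing $\s_1^{-2}\in\langle\s_2\rangle\cap\langle\s_1\rangle=\{\eps\}$ and hence $p\le2$, contrary to $p\ge3$. It remains to prove $q'<p$, which is automatic when $q<p$ (since $q'\mid q$), so we may assume $q\ge p$. Let $G$ act on the $p$ cosets of $\langle\s_2\rangle$; the kernel is exactly $C$, so $\overline G:=G/C$ is a transitive subgroup of $S_p$, still tight with $|\overline G|=pq'$, in which the image of $\langle\s_1\rangle$ is a regular cyclic subgroup (its intersection with a point stabiliser pulls back to $\langle\s_1\rangle\cap\langle\s_2\rangle=\{\eps\}$) and the image of $\langle\s_2\rangle$ is a full point stabiliser, of order $q'$. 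Coordinatising the $p$ points by $\Z_p$ so that $\s_1$ acts as the shift $\tau\colon x\mapsto x+1$, the element $\s_2$ acts as a permutation $\sigma$ fixing $0$ and of order $q'$, and the relation $(\s_1\s_2)^2=\eps$ becomes $\sigma\tau\sigma=\tau^{-1}$, i.e. $\sigma(\sigma(x)+1)=x-1$ for all $x\in\Z_p$.

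Deducing $q'<p$ from this data is the main obstacle. The cheap observation that $\sigma$ fixes a point only gives $q'\le(p-1)!$, which is far too weak, so one must genuinely use the relation $\sigma\tau\sigma=\tau^{-1}$ together with the factorisation $\overline G=\langle\tau\rangle\langle\sigma\rangle$. (As a sanity check on the target: if the image of $\langle\s_1\rangle$ happens to be normal in $\overline G$, then, since a regular $p$-cycle is self-centralising in $S_p$, the point stabiliser embeds in $\operatorname{Aut}(\Z_p)$ and $q'\le\varphi(p)<p$; the substance of the proof is the non-normal case, where one must track how $\sigma$ can permute the $p-1$ nonzero points subject to $\sigma\tau\sigma=\tau^{-1}$.) An alternative entry point, worth noting, is to first verify that the exponents in the tight relations $\s_2^{-1}\s_1=\s_1^{i_1}\s_2^{j_1}$ and $\s_2\s_1^{-1}=\s_1^{i_2}\s_2^{j_2}$ satisfy $i_2\equiv-i_1\pmod p$, so that $\G^+(\calP)=\GP{p,q}{i_1,j_1,-i_1,j_2}$ and $\langle\s_2^{j_1-1}\rangle$ is normal by \pref{gp-props2}(a); but even then one must bound $\gcd(j_1-1,q)$ below $p$, which is again the crux.
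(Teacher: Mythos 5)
Your reduction to the core $C=\langle\s_2^{q'}\rangle$ and your proof that $q'\ge 2$ are sound (modulo a one-line justification that a chiral polyhedron has $p\ge 3$, since every polyhedron of type $\{2,q\}$ is regular), but the proposal stops exactly where the content of the proposition lies: you never establish $q'<p$. You acknowledge this yourself (``Deducing $q'<p$ from this data is the main obstacle,'' ``\ldots which is again the crux''), so as written this is a correct setup followed by an unproved key step, not a proof.

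The missing step is closed by a sharper use of the coset action you already built. Rather than only recording that $\sigma$ fixes the point $0$ and hence acts on the remaining $p-1$ points (which, as you note, only bounds $q'$ by the maximal order of an element of $\mathrm{Sym}(p-1)$), consider the orbit of the single coset $H\s_1$ under right multiplication by $\langle\s_2\rangle$, i.e.\ the cycle of $\sigma$ through your point $1$. Its stabilizer is $S=\{\s_2^k:\s_1\s_2^k\s_1^{-1}\in H\}=\langle\s_2\rangle\cap\s_1^{-1}\langle\s_2\rangle\s_1$; conjugating by $\s_1$ sends $S$ to $\langle\s_2\rangle\cap\s_1\langle\s_2\rangle\s_1^{-1}$, a subgroup of the cyclic group $\langle\s_2\rangle$ of the same order as $S$, hence equal to $S$. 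So $S$ is normalized by $\s_1$ (and trivially by $\s_2$), is therefore normal in $\G^+(\calP)$, and thus coincides with your core $C=\langle\s_2^{q'}\rangle$. By orbit--stabilizer the orbit of $H\s_1$ has size exactly $[\langle\s_2\rangle:S]=q'$; in your coordinates, $\sigma$ has a cycle of length equal to its full order $q'$, because the stabilizer of the point $1$ in $\langle\sigma\rangle$ is the image of $S=C$, i.e.\ trivial. Since that orbit is disjoint from the fixed coset $H$, it has at most $p-1$ elements, whence $q'\le p-1<p$. This is essentially the paper's argument; note that it produces the normal subgroup directly (so the preliminary identification of the core is unnecessary) and that it does not even use the hypothesis $q\ge p$.
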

	
	\begin{proof}
	Let $H = \langle \s_2 \rangle$. Since $\calP$ is tight, there are $p$ right cosets of $H$;
	namely, $H, H \s_1, \ldots, H \s_1^{p-1}$.
	Considering the action of $\langle \s_2 \rangle$ on these cosets, we see that the stabilizer of $H \s_1$ is 
	$\langle \s_2^{q'} \rangle$ for some $q'$ dividing $q$. Since $\s_2^{q'}$ fixes $H \s_1$, that means that
	$\s_1 \s_2^{q'} \s_1^{-1} \in H$, and it follows that $\langle \s_2^{q'} \rangle$ is normal.
	
	Now, the size of the orbit of $H \s_1$ is equal to $q'$. Since $\s_2$
	fixes the coset $H$, the orbit of $H \s_1$ has size at most $p-1$, and so $q' < p$. 
	Next, suppose $q' = 1$. Since $\s_1 \s_2 = \s_2^{-1} \s_1^{-1}$, it follows that $(H \s_1) \s_2 = H \s_2^{-1} \s_1^{-1}
	= H \s_1^{-1}$, and so $\s_1 = \s_1^{-1}$. That forces $p = 2$, but for every $q$, 
	there is only a single polyhedron of type $\{2, q\}$, and that polyhedron is regular. So $q' \geq 2$.
	\end{proof}

	\begin{proposition}
	\label{prop:central-elt}
	Suppose $\calP$ is a tight chiral polyhedron of type $\{p, q\}$, 
	with $\G^+(\calP) = \langle \s_1, \s_2 \rangle$. If $\langle \s_2^{q'} \rangle$ is normal
	in $\G^+(\calP)$, then $\s_1 \s_2^{q'} = \s_2^{aq'} \s_1$ for some $a$ satisfying
	$a^2 \equiv 1$ (mod $q/q'$). Furthermore, $\s_1^2$ commutes with $\s_2^{q'}$,
	and if $p$ is odd, then $\s_2^{q'}$ is central.
	\end{proposition}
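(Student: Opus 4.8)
The hypothesis is that $\langle \s_2^{q'} \rangle$ is normal in $\G^+(\calP)$. Normality means $\s_1 \s_2^{q'} \s_1^{-1}$ lies in $\langle \s_2^{q'} \rangle$, so we may write $\s_1 \s_2^{q'} = \s_2^{aq'} \s_1$ for some integer $a$, which is the first assertion (the element $\s_2^{q'}$ has order $q/q'$ so $a$ is only well-defined mod $q/q'$). The plan is then to extract the congruence $a^2 \equiv 1$ by conjugating \emph{twice} and using the standard relation $\s_1 \s_2 = \s_2^{-1}\s_1^{-1}$, i.e.\ $(\s_1 \s_2)^2 = \eps$, to relate $\s_1^2$ back into the picture.

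\textbf{Key steps.} First I would conjugate the relation $\s_1 \s_2^{q'} = \s_2^{aq'}\s_1$ by $\s_1$ once more: applying $\s_1(\cdot)\s_1^{-1}$ gives $\s_1^2 \s_2^{q'} \s_1^{-2} = \s_1 \s_2^{aq'} \s_1^{-1} = (\s_1 \s_2^{q'}\s_1^{-1})^a = \s_2^{a^2 q'}$, so $\s_1^2 \s_2^{q'} = \s_2^{a^2 q'}\s_1^2$. Next I want to show the left side equals $\s_2^{q'}\s_1^2$, which will force $\s_2^{a^2 q'} = \s_2^{q'}$ and hence $a^2 q' \equiv q' \pmod q$, i.e.\ $a^2 \equiv 1 \pmod{q/q'}$. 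To see that $\s_1^2$ commutes with $\s_2^{q'}$: from $(\s_1\s_2)^2 = \eps$ we have $\s_1 \s_2 \s_1 = \s_2^{-1}$, so $\s_1 \s_2^{q'} \s_1 = (\s_1 \s_2 \s_1)^{q'}$... wait, that rewriting needs care since $\s_1 \s_2^{q'}\s_1 \ne (\s_1\s_2\s_1)^{q'}$ in general. Instead I would argue directly: conjugation by $\s_1$ is an automorphism of $\langle \s_2 \rangle$ (since $\langle \s_2^{q'}\rangle$ normal doesn't immediately give $\langle \s_2 \rangle$ normal), so let me instead use that $\s_1 \s_2 \s_1^{-1}$ and $\s_2$ together generate, and compute. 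The cleanest route: set $\tau = \s_2^{q'}$; we have $\s_1 \tau \s_1^{-1} = \tau^a$. Then $\s_1^{-1}\tau\s_1 = ?$ — from $\s_1 \tau \s_1^{-1} = \tau^a$ we get $\tau = \s_1^{-1}\tau^a \s_1 = (\s_1^{-1}\tau\s_1)^a$, so $\s_1^{-1}\tau\s_1$ is an $a$-th root of $\tau$; one checks it must be $\tau^{a'}$ with $a a' \equiv 1$. Now use $\s_1\s_2\s_1 = \s_2^{-1}$: raising appropriately, $\s_1 \tau \s_1 = \s_1 \s_2^{q'}\s_1$. Since $\s_1 \s_2 \s_1 = \s_2^{-1}$ we get $\s_1 \s_2^{q'} \s_1 = \s_1 \s_2^{q'}\s_1^{-1}\cdot \s_1^2 = \tau^a \s_1^2$; but also $\s_1 \s_2^{q'}\s_1 = (\s_1 \s_2 \s_1^{-1})^{?}$... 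The point I will actually use is: $\s_1^2 = (\s_1\s_2)(\s_2^{-1}\s_1) $, hmm. Better — from $\s_1 \s_2 \s_1 = \s_2^{-1}$, conjugating $\tau$ by $\s_1$ "the other way" shows $\s_1 \tau \s_1^{-1}$ and $\s_1^{-1}\tau\s_1$ are related by the inner automorphism coming from $\s_1^2$, and since $\s_1 \s_2 \s_1 = \s_2^{-1}$ forces $\s_1 \tau \s_1 = \tau^{-1}$, combining $\s_1\tau\s_1^{-1} = \tau^a$ with $\s_1 \tau \s_1 = \tau^{-1}$ gives $\s_1^2 \tau \s_1^{-2} = \s_1(\tau^a)\s_1 = (\s_1 \tau \s_1)^a = \tau^{-a}$ — wait, but also $\s_1^2 \tau \s_1^{-2} = \tau^{a^2}$ from before, while $\s_1\tau\s_1 \cdot \s_1^{-2} = \tau^{-1}\s_1^{-2}$... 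I need to be disciplined here, but the upshot is that two evaluations of $\s_1^2 \tau \s_1^{-2}$ — one as $\tau^{a^2}$, one as $\tau^{1}$ via the involutory relation — collide and yield $a^2 \equiv 1 \pmod{q/q'}$, and simultaneously $\s_1^2 \tau = \tau \s_1^2$. For the final clause, if $p$ is odd then $\s_1$ has odd order, so $\langle \s_1^2 \rangle = \langle \s_1 \rangle$; thus $\s_1$ itself commutes with $\tau$, and since $\s_2$ trivially commutes with $\tau = \s_2^{q'}$, the element $\tau$ is central.

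\textbf{Main obstacle.} The delicate point is pinning down the sign/inverse bookkeeping in how conjugation by $\s_1$ acts on $\langle \s_2^{q'}\rangle$: one must correctly combine the "one-sided" conjugation relation $\s_1 \s_2^{q'}\s_1^{-1} = \s_2^{aq'}$ with the defining polyhedral relation $(\s_1\s_2)^2 = \eps$ (equivalently $\s_1 \s_2 \s_1 = \s_2^{-1}$, hence $\s_1 \s_2^{q'}\s_1^{-1} = \s_1^2 \s_2^{-q'}\s_1^{-2}$ is \emph{not} quite right — rather $\s_1 \s_2^{q'}\s_1 = \s_2^{-q'}$ holds directly), and see that the two resulting expressions for $\s_1^2 \s_2^{q'}\s_1^{-2}$ force both the commuting statement and $a^2 \equiv 1$. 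Everything else — the existence of $a$, and the odd-$p$ centrality — is immediate once that computation is done correctly.
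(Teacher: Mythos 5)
There is a genuine gap. The parts of your argument that work are: the existence of $a$ (immediate from normality), the computation $\s_1^2 \s_2^{q'} \s_1^{-2} = \s_1(\s_1 \s_2^{q'}\s_1^{-1})\s_1^{-1} = (\s_1\s_2^{q'}\s_1^{-1})^a = \s_2^{a^2q'}$, and the final clause that odd $p$ upgrades ``$\s_1^2$ commutes with $\s_2^{q'}$'' to centrality. What is never validly established is the crucial second evaluation $\s_1^2\s_2^{q'}\s_1^{-2} = \s_2^{q'}$, from which both the congruence $a^2 \equiv 1 \pmod{q/q'}$ and the commuting statement would follow. The identity you ultimately lean on, $\s_1\s_2^{q'}\s_1 = \s_2^{-q'}$ (``$\s_1\tau\s_1 = \tau^{-1}$''), is false for $q' > 1$: since $\s_1\s_2^{q'}\s_1 = (\s_1\s_2^{q'}\s_1^{-1})\s_1^2 = \s_2^{aq'}\s_1^2$, the claimed identity would force $\s_1^2 \in \langle\s_2\rangle$ and hence $\s_1^2 = \eps$ (because $\langle\s_1\rangle\cap\langle\s_2\rangle = \{\eps\}$), contradicting $p \geq 3$. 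The map $x \mapsto \s_1 x \s_1$ is not a homomorphism, so $\s_1\s_2\s_1 = \s_2^{-1}$ does not propagate to powers of $\s_2$ — a pitfall you flagged yourself early on but then reintroduced. The intermediate manipulations $\s_1^2\tau\s_1^{-2} = \s_1\tau^a\s_1$ and $(\s_1\tau\s_1)^a = \s_1\tau^a\s_1$ are likewise invalid for the same reason.

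The missing idea, which is how the paper closes the argument, is to conjugate by the involution $\s_1\s_2$ rather than by $\s_1$. Since $\s_2$ commutes with $\s_2^{q'}$, one has $(\s_1\s_2)\s_2^{q'}(\s_1\s_2)^{-1} = \s_1\s_2^{q'}\s_1^{-1} = \s_2^{aq'}$, so conjugation by $\s_1\s_2$ agrees with conjugation by $\s_1$ on $\langle\s_2^{q'}\rangle$. Applying it twice and using $(\s_1\s_2)^2 = \eps$ gives $\s_2^{q'} = (\s_1\s_2)^2\s_2^{q'}(\s_1\s_2)^{-2} = \s_2^{a^2q'}$, whence $a^2 \equiv 1 \pmod{q/q'}$, and then $\s_1^2\s_2^{q'}\s_1^{-2} = \s_2^{a^2q'} = \s_2^{q'}$ gives the commuting statement. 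With that substitution your outline becomes correct; without it, the proof does not go through.
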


	\begin{proof}
	If $\langle \s_2^{q'} \rangle$ is normal in $\G^+(\calP)$, then
	$\s_1 \s_2^{q'} = \s_2^{aq'} \s_1$ for some $a$.
	Then $(\s_1 \s_2) \s_2^{q'} (\s_1 \s_2)^{-1} = \s_2^{aq'}$, and thus $(\s_1 \s_2)^2 \s_2^{q'} (\s_1 \s_2)^{-2}
	= \s_2^{a^2 q'}$. Since $(\s_1 \s_2)^2 = \eps$, it follows that 
	$a^2 q' \equiv q'$ (mod $q$), and so $a^2 \equiv 1$ (mod $q/q'$). It is clear then that $\s_1^2$
	commutes with $\s_2^{q'}$. If $p$ is odd, this implies that $\s_1$ commutes with $\s_2^{q'}$,
	and thus $\s_2^{q'}$ is central.
	\end{proof}
	
	\begin{corollary}
	\label{cor:tight-quos}
	Every tight chiral polyhedron of type $\{p, q\}$ covers a tight chiral or orientably
	regular polyhedron of type $\{p', q\}$ for some $p' < p$ or of type $\{p, q'\}$ for
	some $q' < q$. Furthermore, every tight chiral polyhedron covers 
	a tight orientably regular polyhedron.
	\end{corollary}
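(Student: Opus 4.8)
The plan is to derive both assertions from \pref{normal-props} and \pref{inverse-quo-crit}, using a duality reduction for the first part and an induction for the second.

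For the first assertion, I would begin by reducing to the case $q \geq p$. If instead $p > q$, then the dual $\calP^{\delta}$ is a tight chiral polyhedron of type $\{q, p\}$ whose first entry is no larger than its second, and a cover of $\calP^{\delta}$ of type $\{q, p'\}$ or $\{q', p\}$ dualizes back to a cover of $\calP$ of type $\{p', q\}$ or $\{p, q'\}$ of exactly the required kind. Here I would use that a homomorphism of rotation groups sending distinguished generators to distinguished generators dualizes to one between the dual groups (replace each $\s_i$ by $\s_{3-i}^{-1}$ on both sides), so "covers" is compatible with passing to duals. So assume $q \geq p$. Since $\calP$ is chiral we have $p \geq 3$ (a polyhedron of type $\{p, q\}$ with $p \leq 2$ is regular). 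Now \pref{normal-props} supplies an integer $q'$ dividing $q$ with $2 \leq q' < p \leq q$ such that $N := \langle \s_2^{q'} \rangle$ is normal in $\G^+(\calP)$, and since $p \geq 3$, \pref{inverse-quo-crit} says $\G^+(\calP)/N$ is the rotation group of a tight chiral or orientably regular polyhedron $\calQ$. The quotient homomorphism exhibits $\calP$ as a cover of $\calQ$, and a short computation (using $\langle \s_1 \rangle \cap \langle \s_2 \rangle = \{\eps\}$ and $q' \mid q$) shows that the image of $\s_1$ in $\G^+(\calP)/N$ still has order $p$ while the image of $\s_2$ has order $q/q'$; hence $\calQ$ has type $\{p, q/q'\}$ with $q/q' < q$, as desired.

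For the second assertion, I would induct on the product $pq$ (equivalently, on the flag count $2pq$), which is a positive integer. Given a tight chiral polyhedron $\calP$ of type $\{p, q\}$, the first assertion produces a cover $\calQ$ that is tight chiral or orientably regular and whose parameter product is strictly smaller ($p'q < pq$ or $pq' < pq$). If $\calQ$ is orientably regular we are done. If $\calQ$ is chiral, the inductive hypothesis yields a tight orientably regular polyhedron $\calR$ covered by $\calQ$, and since covering of rotation groups is transitive (a composition of such homomorphisms is again one), $\calP$ covers $\calR$. Because the parameter product strictly decreases at each step, this recursion must terminate, and it can only terminate at an orientably regular polyhedron.

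The substantive work is already contained in \pref{normal-props} and \pref{inverse-quo-crit}, so no step here is genuinely deep. The two places where I would be most careful are the duality reduction — in particular, checking that "covers" behaves well under taking duals — and the observation that chirality forces $p, q \geq 3$, which is precisely what allows \pref{inverse-quo-crit} to be applied.
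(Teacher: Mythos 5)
Your proposal follows essentially the same route as the paper: reduce to $q \geq p$ by duality, invoke \pref{normal-props} to get the normal subgroup $\langle \s_2^{q'} \rangle$ with $2 \leq q' < p$, apply \pref{inverse-quo-crit} to obtain the tight quotient, and iterate (your induction on $pq$ just makes the paper's ``repeat the process'' explicit). One small slip: the quotient of $\G^+(\calP)$ by $N = \langle \s_2^{q'} \rangle$ sends $\s_2$ to an element of order $q'$, not $q/q'$ (we have $\s_2^n \in N$ exactly when $q' \mid n$), so the resulting polyhedron has type $\{p, q'\}$ as in the paper's statement; this does not affect your argument, since $q' < p \leq q$ still gives a strictly smaller second entry and a strictly smaller product $pq'$, but the type label should be corrected.
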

	
	\begin{proof}
	Let $\calP$ be a tight chiral polyhedron of type $\{p, q\}$, and let us assume that $q \geq p$; 
	the proof for $q < p$ is analogous. Then \pref{normal-props} guarantees that there is a normal 
	subgroup $\langle \s_2^{q'} \rangle$, with $q'$ dividing $q$ and $2 \leq q' < p$.
	It follows from \pref{inverse-quo-crit} that $\calP$ covers a tight chiral or orientably
	regular polyhedron of type $\{p, q'\}$. If the quotient is chiral, then we repeat the process
	(with the dual), and eventually we must hit an orientably regular quotient.
	\end{proof}

	Let us call a chiral polyhedron of type $\{p, q\}$ \emph{atomic} if it is tight and it does not cover 
	any tight chiral polyhedra of type $\{p', q\}$ or $\{p, q'\}$ with $p' < p$ or $q' < q$. Every tight
	chiral polyhedron that is not itself atomic must cover an atomic chiral polyhedron. Studying the
	atomic chiral polyhedra will thus give us some insight into the structure of tight chiral polyhedra
	in general. We start with:
	
	\begin{proposition}
	\label{prop:gp-is-reg}
	\begin{enumerate}
	\item If $\calP$ is a tight orientably regular polyhedron of type $\{p, q\}$, then 
	$\G^+(\calP) = \GP{p, q}{i, j, -i, -j}$ for some $i$ and $j$.
	\item If $\calP$ is a tight chiral polyhedron of type $\{p, q\}$, then $\G^+(\calP)$
	is a quotient of $\GP{p, q}{i_1, j_1, i_2, j_2}$, where either $i_1 \not \equiv -i_2$ (mod $p$), or
	$j_1 \not \equiv -j_2$ (mod $q$).
	\end{enumerate}
	\end{proposition}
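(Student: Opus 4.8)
The plan is to extract the relations of $\GP{p,q}{\cdot}$ directly from the multiplication structure of a tight polyhedron, and then separate the regular and chiral cases using the criterion that $\calP$ is orientably regular exactly when $\G^+(\calP)$ carries a group automorphism inverting both $\s_1$ and $\s_2$. First I would record the common setup. Since $\calP$ is tight of type $\{p,q\}$, we have $\G^+(\calP) = \langle\s_1\rangle\langle\s_2\rangle$ with $\langle\s_1\rangle\cap\langle\s_2\rangle = \{\eps\}$, so every element has a unique expression $\s_1^a\s_2^b$ with $0 \le a < p$ and $0 \le b < q$. In particular we may write $\s_2^{-1}\s_1 = \s_1^{i_1}\s_2^{j_1}$ and $\s_2\s_1^{-1} = \s_1^{i_2}\s_2^{j_2}$; since also $\s_1^p = \s_2^q = (\s_1\s_2)^2 = \eps$, the group $\G^+(\calP)$ satisfies every defining relation of $\GP{p,q}{i_1,j_1,i_2,j_2}$ and is therefore a quotient of it. This proves the ``quotient'' clause of (b) outright, and reduces (a) to showing the exponents can be taken in the form $(i,j,-i,-j)$ with the quotient an isomorphism, and (b) to ruling out $i_1 \equiv -i_2$ (mod $p$) and $j_1 \equiv -j_2$ (mod $q$) holding simultaneously.

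For (a), orientable regularity supplies an automorphism $\phi$ of $\G^+(\calP)$ with $\phi(\s_i) = \s_i^{-1}$. Applying $\phi$ to $\s_2^{-1}\s_1 = \s_1^{i_1}\s_2^{j_1}$ gives $\s_2\s_1^{-1} = \s_1^{-i_1}\s_2^{-j_1}$, and comparing with $\s_2\s_1^{-1} = \s_1^{i_2}\s_2^{j_2}$ — using that $\s_1$ has order $p$ and $\s_2$ order $q$, so that the representation is unique — forces $i_2 \equiv -i_1$ and $j_2 \equiv -j_1$. Setting $i = i_1$, $j = j_1$, we conclude that $\G^+(\calP)$ is a quotient of $\GP{p,q}{i,j,-i,-j}$. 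By \pref{gp-props2}(e) this group is tight, so together with the relations $\s_1^p = \s_2^q = \eps$ it has order at most $pq$; since $|\G^+(\calP)| = pq$, the quotient map is an isomorphism and $\G^+(\calP) = \GP{p,q}{i,j,-i,-j}$.

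For (b), suppose for contradiction that $i_1 \equiv -i_2$ (mod $p$) and $j_1 \equiv -j_2$ (mod $q$). Then $\G^+(\calP)$ is a quotient of $\GP{p,q}{i_1,j_1,-i_1,-j_1}$, which is tight by \pref{gp-props2}(e) and hence of order at most $pq$; as $|\G^+(\calP)| = pq$, the quotient is an isomorphism. Now the substitution $\s_i \mapsto \s_i^{-1}$ carries each defining relation of this presentation to a consequence of the defining relations: $\s_1^p$ and $\s_2^q$ are preserved, $(\s_1\s_2)^2 = \eps$ becomes $(\s_1^{-1}\s_2^{-1})^2 = \eps$, which follows from $(\s_2\s_1)^2 = \s_2(\s_1\s_2)\s_1 = \eps$, and the two twisting relations $\s_2^{-1}\s_1 = \s_1^{i_1}\s_2^{j_1}$ and $\s_2\s_1^{-1} = \s_1^{-i_1}\s_2^{-j_1}$ are interchanged with each other. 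Hence $\s_i \mapsto \s_i^{-1}$ extends to a surjective endomorphism, and therefore an automorphism, of $\G^+(\calP)$, making $\calP$ orientably regular and contradicting chirality. So one of the two noncongruences must hold, which completes (b).

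Every individual step is short; the one point that demands attention is the upgrade from ``$\G^+(\calP)$ is a quotient of $\GP{p,q}{\cdot}$'' to ``$\G^+(\calP)$ equals $\GP{p,q}{\cdot}$'', which rests on the tightness of $\GP{p,q}{i,j,-i,-j}$ provided by \pref{gp-props2}(e) together with the count $|\G^+(\calP)| = pq$. In part (b) one must secure this equality \emph{before} running the relation-chasing argument, since the inverting map is only guaranteed to be a well-defined homomorphism once we know $\G^+(\calP)$ is presented by exactly those relations.
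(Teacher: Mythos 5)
Your proof is correct, and part (b) follows essentially the same route as the paper: write $\s_2^{-1}\s_1$ and $\s_2\s_1^{-1}$ in the form $\s_1^a\s_2^b$ using tightness, and then, under the hypothesis $i_1\equiv -i_2$, $j_1\equiv -j_2$, use the tightness of $\GP{p,q}{i_1,j_1,-i_1,-j_1}$ from \pref{gp-props2}(e) together with $|\G^+(\calP)|=pq$ to upgrade the quotient to an isomorphism, and observe that the resulting presentation is invariant under $\s_i\mapsto\s_i^{-1}$, forcing regularity. The one place you genuinely diverge is part (a): the paper simply cites an external result (Theorem 3.3 of the tight-regular-polyhedra paper), whereas you derive it directly by applying the inverting automorphism $\phi$ guaranteed by orientable regularity to the relation $\s_2^{-1}\s_1=\s_1^{i_1}\s_2^{j_1}$ and invoking uniqueness of the normal form $\s_1^a\s_2^b$ (which holds because $\langle\s_1\rangle\cap\langle\s_2\rangle=\{\eps\}$). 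Your argument is self-contained and uses only facts already established in the paper's background, which is arguably cleaner than the citation; the trade-off is only length. Your closing remark --- that the isomorphism with the presented group must be secured \emph{before} arguing that $\s_i\mapsto\s_i^{-1}$ is a well-defined homomorphism --- correctly identifies the one point where the logic could otherwise go wrong, and the paper handles it the same way.
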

	
	\begin{proof}
	Part (a) follows from \cite[Theorem 3.3]{tight3}. 
	
	The first half of part (b) follows from the fact
	that, if $\calP$ is tight, then $\G^+(\calP) = \langle \s_1 \rangle \langle \s_2 \rangle$.
	For the second half, if $i_1 \equiv -i_2$ and $j_1 \equiv -j_2$, then $\G^+(\calP)$
	is a quotient of $\GP{p, q}{i_1, j_1, -i_1, -j_1}$. This group is already tight, 
	by \pref{gp-props2}(e), and so it follows that $\G^+(\calP) = \GP{p, q}{i_1, j_1, -i_1, -j_1}$.
	However, this group is invariant under the map that sends each $\s_i$ to $\s_i^{-1}$,
	which means that $\calP$ is orientably regular. So if $\calP$ is chiral, then either
	$i_1 \not \equiv -i_2$ or $j_1 \not \equiv -j_2$.
	\end{proof}
	
	Given a subgroup $H$ of $G$, the \emph{core} of $H$ in $G$ is the largest
	subgroup of $H$ that is normal in $G$. We say that $H$ is \emph{core-free} (in $G$)
	if the core of $H$ is trivial. Note that if $N$ is the core of $H$, then
	$H/N$ is core-free in $G/N$.
	
	\begin{proposition}
	\label{prop:core-free}
	If $\calP$ is an atomic chiral polyhedron with $\G^+(\calP) = \langle \s_1, \s_2 \rangle$,
	then either $\langle \s_1 \rangle$ or $\langle \s_2 \rangle$ is core-free.
	\end{proposition}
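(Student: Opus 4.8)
Let $\calP$ be an atomic chiral polyhedron of type $\{p, q\}$, and suppose for contradiction that neither $\langle \s_1 \rangle$ nor $\langle \s_2 \rangle$ is core-free. The plan is to show that taking the quotient by one of these cores produces a \emph{smaller} tight polyhedron (chiral or regular) of type $\{p', q\}$ or $\{p, q'\}$, contradicting atomicity. The key observations are that the core of $\langle \s_i \rangle$, being normal in $\G^+(\calP)$ and contained in the cyclic group $\langle \s_i \rangle$, is itself cyclic of the form $\langle \s_i^{q'} \rangle$ (respectively $\langle \s_i^{p'} \rangle$) for some proper divisor, and that quotienting by such a normal cyclic subgroup of $\langle \s_2 \rangle$ lands us exactly in the situation of \pref{inverse-quo-crit}.

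First I would set up the cores: let $N_1 = \langle \s_1^{p'} \rangle$ be the core of $\langle \s_1 \rangle$ and $N_2 = \langle \s_2^{q'} \rangle$ be the core of $\langle \s_2 \rangle$, where by assumption $N_1$ and $N_2$ are both nontrivial, so $p' < p$ and $q' < q$ (and both $p', q' \geq 1$). Without loss of generality assume $q \geq p$ (otherwise pass to the dual, which swaps the roles of $\s_1$ and $\s_2$ and of $p$ and $q$, and preserves atomicity and chirality). Now I would apply \pref{inverse-quo-crit} with the normal subgroup $N_2 = \langle \s_2^{q'} \rangle$: since $\calP$ is a tight chiral polyhedron of type $\{p, q\}$ and we may assume $p \geq 3$ (the degenerate case $p = 2$ gives only the regular polyhedron $\{2, q\}$, which is not chiral), the proposition tells us that $\G^+(\calP)/N_2$ is the rotation group of a tight chiral or orientably regular polyhedron of type $\{p, q''\}$ for some $q''$ dividing $q$. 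The essential point is that $q'' < q$: since $N_2$ is nontrivial, $\s_2^{q'}$ is a nonidentity element, so the image $\ol{\s_2}$ has order $q'' = q'$ which is a proper divisor of $q$.

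The one thing to rule out is that this quotient is \emph{trivial} on the relevant part — i.e., that $q'' < q$ really does give a polyhedron and not something degenerate — but that is exactly what \pref{inverse-quo-crit} guarantees (it produces a genuine tight chiral or orientably regular polyhedron of type $\{p, q'\}$, using $p \geq 3$). So $\calP$ covers a tight chiral or orientably regular polyhedron of type $\{p, q'\}$ with $q' < q$. If this quotient is chiral, we have immediately contradicted the atomicity of $\calP$. If instead it is orientably regular, we still need to derive a contradiction: here I would recall that $\calP$, being chiral, must cover something chiral and strictly smaller — but more directly, I would instead observe that because $\langle \s_1 \rangle$ also has nontrivial core $N_1 = \langle \s_1^{p'} \rangle$ with $p' < p$, the same argument applied to $N_1$ (again via \pref{inverse-quo-crit}, now with $\s_1$ and $\s_2$ exchanged, which is legitimate provided $q \geq 3$; the case $q = 2$ again forces regularity) shows $\calP$ covers a tight chiral or orientably regular polyhedron of type $\{p', q\}$ with $p' < p$. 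In either case $\calP$ covers a strictly smaller tight chiral or orientably regular polyhedron of type $\{p', q\}$ or $\{p, q'\}$.

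Finally I would close the loop on the chiral-versus-regular issue. Atomicity forbids $\calP$ from covering a smaller tight \emph{chiral} polyhedron, so both quotients $\G^+(\calP)/N_1$ and $\G^+(\calP)/N_2$ must be rotation groups of orientably regular polyhedra. But an orientably regular quotient means the defining relations are invariant under $\s_i \mapsto \s_i^{-1}$; tracking how this interacts with the representation $\G^+(\calP) = \langle \s_1 \rangle\langle \s_2 \rangle$ coming from \pref{gp-is-reg}(b) — where $\s_2^{-1}\s_1 = \s_1^{i_1}\s_2^{j_1}$ and $\s_2\s_1^{-1} = \s_1^{i_2}\s_2^{j_2}$ with $i_1 \not\equiv -i_2 \pmod p$ or $j_1 \not\equiv -j_2 \pmod q$ — I would show that modulo $N_1$ the first obstruction ($i_1 \not\equiv -i_2$) is killed and modulo $N_2$ the second is killed, but at least one of $N_1, N_2$ cannot kill its obstruction, contradicting regularity of the corresponding quotient. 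I expect the main obstacle to be this last bookkeeping step: carefully verifying that the cyclic core $\langle \s_1^{p'}\rangle$ is precisely what relates $i_1$ and $-i_2$ modulo $p/p'$ (so that the quotient is chiral iff the obstruction survives the quotient), and likewise for $\langle \s_2^{q'}\rangle$ with $j_1, -j_2$ — which amounts to a clean compatibility between \pref{central-elt}, \pref{inverse-quo-crit}, and \pref{gp-is-reg}(b).
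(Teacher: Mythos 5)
Your setup matches the paper's proof: assume both $\langle\s_1\rangle$ and $\langle\s_2\rangle$ have nontrivial cores $N_1=\langle\s_1^{p'}\rangle$ and $N_2=\langle\s_2^{q'}\rangle$, use \pref{inverse-quo-crit} (and its dual) to get tight quotient polyhedra of types $\{p',q\}$ and $\{p,q'\}$, and invoke atomicity to force both quotients to be orientably regular. The problem is in your final step, which is exactly where the real content lies, and as sketched it is misassigned. You propose to show that ``modulo $N_1$ the obstruction $i_1\not\equiv -i_2$ is killed and modulo $N_2$ the obstruction $j_1\not\equiv -j_2$ is killed,'' relating $i_1$ and $-i_2$ via the core of $\langle\s_1\rangle$. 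But quotienting by $N_1\leq\langle\s_1\rangle$ reduces the order of $\s_1$ to $p'$, so regularity of $\G^+(\calP)/N_1$ only yields $i_1\equiv -i_2\pmod{p'}$ --- a strictly weaker statement than the congruence mod $p$ needed in \pref{gp-is-reg}(b), and one that is actually satisfied by the genuine atomic chiral polyhedra classified later. With your assignment you only obtain $i_1\equiv -i_2\pmod{p'}$ and $j_1\equiv -j_2\pmod{q'}$, which is consistent with $\calP$ being chiral, so no contradiction follows; nor can you rescue the argument by showing ``at least one quotient cannot kill its obstruction,'' since both quotients really are regular for the atomic examples.

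The correct bookkeeping is the transpose of yours. In $\G^+(\calP)/N_1$ the generator $\s_2$ \emph{retains} its full order $q$ (because $\langle\s_1\rangle\cap\langle\s_2\rangle$ is trivial), so applying \pref{gp-is-reg}(a) to this quotient of $\GP{p',q}{i_1,j_1,i_2,j_2}$ and using uniqueness of the normal form $\s_1^a\s_2^b$ gives $j_1\equiv -j_2\pmod{q}$ at full modulus. Symmetrically, in $\G^+(\calP)/N_2$ the generator $\s_1$ retains order $p$, so regularity there gives $i_1\equiv -i_2\pmod{p}$. Only by combining the two full-modulus congruences --- one from each quotient, each controlling the generator \emph{not} being collapsed --- does \pref{gp-is-reg}(b) force $\calP$ itself to be regular, which is the contradiction. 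So the architecture of your proof is the paper's, but the decisive step is left as a sketch whose stated plan would fail; it needs to be replaced by the crossed version just described.
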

	
	\begin{proof}
	Suppose that $\calP$ is an atomic chiral polyhedron of type $\{p, q\}$ and
	that there are proper normal subgroups $\langle \s_1^{p'} \rangle$ and $\langle \s_2^{q'} \rangle$ of
	$\G^+(\calP)$. Then $\calP$ covers a tight polyhedron of type $\{p', q\}$ and a tight polyhedron of type $\{p, q'\}$,
	and since $\calP$ is atomic, both of those polyhedra are regular. Now, since $\calP$ is
	a tight polyhedron of type $\{p, q\}$, its automorphism group $\G^+(\calP)$ must be a quotient
	of $\GP{p, q}{i_1, j_1, i_2, j_2}$ for some $i_1, j_1, i_2$, and $j_2$. 
	Let $\calQ_1$ be the tight orientably regular polyhedron of type $\{p', q\}$ that $\calP$ covers,
	and let $\calQ_2$ be the tight orientably regular polyhedron of type $\{p, q'\}$ that $\calP$ covers.
	Then $\G^+(\calQ_1)$ is a quotient of $\GP{p', q}{i_1, j_1, i_2, j_2}$, and \pref{gp-is-reg}(a)
	implies that $j_1 \equiv -j_2$ (mod $q$). Similarly, $\G^+(\calQ_2)$ is a quotient of
	$\GP{p, q'}{i_1, j_1, i_2, j_2}$, and \pref{gp-is-reg}(a) implies that $i_1 \equiv -i_2$
	(mod $p$). Then \pref{gp-is-reg}(b) says that $\calP$ is regular, contradicting our assumptions.
	\end{proof}

	\begin{corollary}
	\label{cor:s1-core-free}
	If $\calP$ is an atomic chiral polyhedron of type $\{p, q\}$ with $q > p$,
	then $\langle \s_1 \rangle$ is core-free.
	\end{corollary}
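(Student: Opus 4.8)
The plan is to argue by contradiction, chaining together \pref{core-free} and \pref{normal-props}. Since $\calP$ is an atomic chiral polyhedron, \pref{core-free} tells us that at least one of $\langle \s_1 \rangle$ and $\langle \s_2 \rangle$ is core-free in $\G^+(\calP)$; our job is to show that under the hypothesis $q > p$ it must be $\langle \s_1 \rangle$. So I would begin by assuming, toward a contradiction, that $\langle \s_1 \rangle$ is \emph{not} core-free, which by \pref{core-free} forces $\langle \s_2 \rangle$ to be core-free.

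Next I would apply \pref{normal-props}. Because $q > p$ we in particular have $q \geq p$, so that proposition hands us a divisor $q'$ of $q$ with $2 \leq q' < p$ such that $\langle \s_2^{q'} \rangle$ is normal in $\G^+(\calP)$. The key observation is that this normal subgroup is nontrivial: the element $\s_2$ has order exactly $q$ in $\G^+(\calP)$ since the vertex-figures of $\calP$ are genuine $q$-gons, and $q'$ is a divisor of $q$ with $q' < p < q$, so $\s_2^{q'}$ has order $q/q' \geq 2$. Thus $\langle \s_2^{q'} \rangle$ is a nontrivial normal subgroup of $\G^+(\calP)$ contained in $\langle \s_2 \rangle$, which contradicts the core-freeness of $\langle \s_2 \rangle$. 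Hence $\langle \s_1 \rangle$ is the core-free one, as claimed.

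I do not anticipate a genuine obstacle: the only points requiring a moment's care are verifying that the $q'$ produced by \pref{normal-props} really satisfies $q' < q$ (immediate from $q' < p < q$), so that the associated normal subgroup is nontrivial, and noting that $\s_2$ attains its full order $q$ because $\calP$ is of type $\{p, q\}$. Everything else is a direct combination of the two cited propositions, so the proof should be only a few lines.
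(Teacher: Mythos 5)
Your proof is correct and matches the paper's: both arguments combine \pref{core-free} with the nontrivial normal subgroup $\langle \s_2^{q'} \rangle$ supplied by \pref{normal-props} (nontrivial because $q' < p < q$), differing only in that you phrase it as a contradiction while the paper states it directly. No issues.
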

	
	\begin{proof}
	By \pref{core-free}, either $\langle \s_1 \rangle$ or $\langle \s_2 \rangle$
	must be core-free, and since $q > p$, \pref{normal-props} says that 
	$\langle \s_2 \rangle$ has a nontrivial core.
	\end{proof}
	
	\subsection{Structure of atomic chiral polyhedra}
	
	In the results that follow, we will usually assume that $q > p$. The atomic chiral
	polyhedra with $q < p$ will then be the duals of what we find.

	\begin{theorem}
	\label{thm:atomic-gp}
	Let $\calP$ be an atomic chiral polyhedron of type $\{p, q\}$, with $q > p$. Suppose that the
	core of $\langle \s_2 \rangle$ is $\langle \s_2^{q'} \rangle$. 
	Then $\G^+(\calP) = \GP{p, q}{i, 1+kq', -i, -1-akq'}$ 
	for some integers $i$, $k$, and $a$, where $a^2 \equiv 1$ (mod $q/q'$) and
	$a \neq 1$. Furthermore, $p$ is even.
	\end{theorem}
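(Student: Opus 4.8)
The plan is to pass to the quotient by $N := \langle \s_2^{q'} \rangle$, identify the quotient group using earlier results, and then lift. Since $\calP$ is chiral it is not of type $\{2, q\}$ (the only polyhedron of type $\{2,q\}$ is regular), so $p \geq 3$ and hence $q > p \geq 3$. By \cref{s1-core-free}, $\langle \s_1 \rangle$ is core-free; by \pref{normal-props}, $\langle\s_2\rangle$ has a nontrivial normal subgroup of the form $\langle\s_2^{q''}\rangle$ with $q''<p$, so the core $N$ of $\langle \s_2 \rangle$ is $\langle \s_2^{q'} \rangle$ with $2 \leq q' < p$. Since $\calP$ is a polyhedron, $\langle \s_1 \rangle \cap N = \{\eps\}$ and $\s_2^j \in N$ exactly when $q' \mid j$; hence in $\ol{\G} := \G^+(\calP)/N$ the element $\ol{\s_1}$ has order $p$ and $\ol{\s_2}$ has order $q'$. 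By \pref{inverse-quo-crit}, $\ol{\G}$ is the rotation group of a tight chiral or orientably regular polyhedron, necessarily of type $\{p, q'\}$. It cannot be chiral: otherwise $\calP$ would cover a tight chiral polyhedron of type $\{p, q'\}$ with $q' < q$, contradicting atomicity of $\calP$. So $\ol{\G}$ is the rotation group of a tight orientably regular polyhedron, and \pref{gp-is-reg}(a) gives $\ol{\G} = \GP{p, q'}{i, j, -i, -j}$ for some integers $i, j$. Lifting the two extra relations to $\G^+(\calP)$ shows that $\G^+(\calP)$ is a quotient of $\GP{p, q}{i, j_1, -i, j_2}$ with $j_1 \equiv j$ and $j_2 \equiv -j \pmod{q'}$; since that group is tight by \pref{gp-props2}(e) and $\G^+(\calP)$ is a tight quotient of the same order $pq$, in fact $\G^+(\calP) = \GP{p, q}{i, j_1, -i, j_2}$.

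The crux — and the step I expect to be the main obstacle — is to show $j \equiv 1 \pmod{q'}$, equivalently $j_1 \equiv 1 \pmod{q'}$. Because $N$ is exactly the core of $\langle \s_2 \rangle$, the subgroup $\langle \ol{\s_2} \rangle$ is core-free in $\ol{\G}$, and $\ol{\G}$ is a tight orientably regular polyhedron of type $\{p, q'\}$ with $q' < p$. I would combine core-freeness of $\langle \ol{\s_2} \rangle$ with the structure of tight orientably regular polyhedra of this shape — in particular the descriptions behind \pref{regular-gp1} and \pref{regular-gp2} (see \cite{tight2, tight3}) — to conclude $j \equiv 1 \pmod{q'}$. (Heuristically, core-freeness of $\langle \ol{\s_2} \rangle$ rules out exactly the quotients that would move $j$ away from $1$; note $\ol{\s_2}^{-1} \ol{\s_1} \ol{\s_2}^{-1} = \ol{\s_1}^i \ol{\s_2}^{j-1}$, so it would suffice to show $\ol{\s_2}^{-1}\ol{\s_1}\ol{\s_2}^{-1} \in \langle \ol{\s_1} \rangle$, since then $\ol{\s_2}^{j-1} \in \langle \ol{\s_1} \rangle \cap \langle \ol{\s_2}\rangle = \{\eps\}$; a clean sufficient condition is that $\ol{\s_1}^2$ be central, via $\ol{\s_2}^{-1}\ol{\s_1}\ol{\s_2}^{-1} = \ol{\s_1}\ol{\s_2}\ol{\s_1}^2\ol{\s_2}^{-1}$, though in general one needs the finer structure of $\ol{\G}$.) Granting $j \equiv 1 \pmod{q'}$, write $j_1 = 1 + kq'$; note also $j_2 \equiv -1 \pmod{q'}$.

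Finally, let $a$ be the integer from \pref{central-elt} with $\s_1 \s_2^{q'} \s_1^{-1} = \s_2^{aq'}$ and $a^2 \equiv 1 \pmod{q/q'}$. Evaluating $\s_1 \s_2^{-1} \s_1$ in two ways (using the two extra relations) gives the identity $\s_1^{-(i+1)} \s_2^{-j_2} \s_1^{i+1} = \s_2^{j_1}$; substituting $j_1 = 1 + kq'$, using $j_2 \equiv -1 \pmod{q'}$, and using that conjugation by $\s_1$ multiplies $\s_2^{q'}$ by $a$, a direct computation yields $j_2 = -1 - akq'$, so $\G^+(\calP) = \GP{p, q}{i, 1+kq', -i, -1-akq'}$. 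Then $a \neq 1$: if $a = 1$ then $j_2 = -j_1$, so this presentation is invariant under sending each $\s_\ell$ to $\s_\ell^{-1}$, which forces $\calP$ to be orientably regular — contrary to hypothesis. And $p$ is even: if $p$ were odd, \pref{central-elt} would make $\s_2^{q'}$ central, giving $a \equiv 1 \pmod{q/q'}$, i.e. $a = 1$, a contradiction. This completes the proof.
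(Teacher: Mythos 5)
Your overall strategy is the same as the paper's: quotient by the core $N=\langle\s_2^{q'}\rangle$, identify the quotient as a tight orientably regular polyhedron with group $\GP{p,q'}{i,j,-i,-j}$, lift the relations back to $\G^+(\calP)$, and use \pref{central-elt} to pin down the second exponent and to deduce $a\neq 1$ and that $p$ is even. Those parts are sound. However, the step you yourself flag as the crux --- showing $j\equiv 1\pmod{q'}$ --- is left as a plan rather than a proof, and it does not require the classifications behind \pref{regular-gp1} and \pref{regular-gp2} at all. The paper closes it in one line with \pref{gp-props2}(a): in $\GP{p,q'}{i,j,-i,-j}$ the subgroup $\langle\ol{\s_2}^{\,j-1}\rangle$ is normal, hence contained in the core of $\langle\ol{\s_2}\rangle$ in the quotient; since $N$ is the full core of $\langle\s_2\rangle$, that quotient core is trivial, so $\ol{\s_2}^{\,j-1}=\eps$, i.e.\ $j\equiv 1\pmod{q'}$. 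Your heuristic (show $\ol{\s_2}^{-1}\ol{\s_1}\ol{\s_2}^{-1}\in\langle\ol{\s_1}\rangle$) is aiming at the same fact, but you give no argument for it, and centrality of $\ol{\s_1}^{\,2}$ is not available in general; as written this is a genuine gap.

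The final computation also has a problem. Your identity $\s_1^{-(i+1)}\s_2^{-j_2}\s_1^{i+1}=\s_2^{j_1}$ is correct, but you cannot evaluate its left-hand side ``using that conjugation by $\s_1$ multiplies $\s_2^{q'}$ by $a$'': the exponent satisfies $-j_2\equiv 1\pmod{q'}$, so $\s_2^{-j_2}$ does not lie in $\langle\s_2^{q'}\rangle$, and the conjugate $\s_1^{i+1}\s_2\s_1^{-(i+1)}$ is not controlled by \pref{central-elt}. The identity that works is the one from the proof of \pref{gp-props2}(a), namely $\s_1\s_2^{j_1-1}\s_1^{-1}=\s_2^{-j_2-1}$: here the exponent $j_1-1=kq'$ \emph{is} a multiple of $q'$, so \pref{central-elt} gives $\s_1\s_2^{kq'}\s_1^{-1}=\s_2^{akq'}$, whence $-j_2-1\equiv akq'\pmod q$ and $j_2=-1-akq'$. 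With these two repairs (both supplied by \pref{gp-props2}(a), which you never invoke beyond part (e)), your argument coincides with the paper's.
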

	
	\begin{proof}
	Since $\calP$ is atomic, taking the quotient by $\langle \s_2^{q'} \rangle$ yields a tight orientably regular polyhedron
	with (the image of) $\langle \s_2 \rangle$ core-free. As a consequence of \cite{tight3}[Thm 3.3], it follows that
	this quotient has group $\GP{p, q'}{i, j, -i, -j}$ for some $i$ and $j$. Then
	\pref{gp-props2}(a) says that $\langle \s_2^{j-1} \rangle$ is normal, and since $\langle \s_2 \rangle$ is core-free,
	it follows that $j = 1$.
	Therefore, $\G^+(\calP)$ satisfies the relations $\s_2^{-1} \s_1 = \s_1^i \s_2^{1+k_1 q'}$
	and $\s_2 \s_1^{-1} = \s_1^{-i} \s_2^{-1 + k_2 q'}$ for some $k_1$ and $k_2$.
	In other words, $\G^+(\calP)$ is a quotient of $\GP{p, q}{i, 1+k_1 q', -i, -1+k_2 q'}$.
	\pref{gp-props2}(e) tells us that this group is already tight, and so it follows that
	$\G^+(\calP)$ is precisely this group. 
	
	Now, by \pref{central-elt}, the relation $\s_1 \s_2^{q'} = \s_2^{aq'} \s_1$
	holds for some $a$ satisfying $a^2 \equiv 1$ (mod $q/q'$). Thus
	$\s_1 \s_2^{k_1 q'} = \s_2^{ak_1 q'} \s_1$. On the other hand,
	the proof of \pref{gp-props2}(a) implies that $\s_1 \s_2^{k_1 q'} = \s_2^{-k_2 q'} \s_1$.
	Thus $ak_1 q' \equiv -k_2 q'$ (mod $q$), and so $\G^+(\calP) = \GP{p, q}{i, 1+k_1 q', -i, -1 - ak_1 q'}$.
	Then \pref{gp-is-reg}(b) says that we need $1+k_1 q' \not \equiv 1+ak_1 q'$ (mod $q$),
	and so we need $a \not \equiv 1$ (mod $q/q'$).
	Finally, if $p$ is odd, then the fact that $\s_1^2$ commutes with $\s_2^{q'}$ implies
	that $\s_1$ commutes with $\s_2^{q'}$; but that would imply that $a = 1$.
	\end{proof}

	Now that we have a general presentation for the group of an atomic chiral polyhedron, it remains
	to determine the allowable values for all of the parameters. 
	We start by finding restrictions on $p$, $q$, and $q'$.

	\begin{lemma}
	\label{lem:q-over-qp}
	Let $\calP$ be an atomic chiral polyhedron of type $\{p, q\}$ with $q > p$, and let
	the core of $\langle \s_2 \rangle$ be $\langle \s_2^{q'} \rangle$. Then $q/q'$ is a prime power.
	\end{lemma}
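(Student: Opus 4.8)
The plan is to combine the explicit presentation $\G^+(\calP) = \GP{p,q}{i, 1+kq', -i, -1-akq'}$ supplied by \tref{atomic-gp} --- where $a^2 \equiv 1$ and $a \not\equiv 1$ modulo $q/q'$ --- with the atomicity of $\calP$. Write $n = q/q'$; since $\Z_n$ admits a non-identity square root of $1$ only when $n \geq 3$, we have $n \geq 3$. The goal is to show that every \emph{proper} divisor of $n$ divides the integer $(a-1)k$, while $n$ itself does not; this is precisely the obstruction that forces $n$ to be a prime power.

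First I would build a family of quotients. For each divisor $e$ of $n$, the subgroup $\langle \s_2^{eq'}\rangle$ is characteristic in the cyclic subgroup $\langle \s_2^{q'}\rangle$, which is normal in $\G^+(\calP)$ because it is the core of $\langle \s_2\rangle$; hence $\langle \s_2^{eq'}\rangle$ is normal too. Since $\langle \s_1\rangle \cap \langle \s_2\rangle = \{\eps\}$, the element $\s_1$ still has order $p$ modulo $\langle \s_2^{eq'}\rangle$, and $\s_2$ has order exactly $eq'$; so by \pref{inverse-quo-crit} --- valid since $p$ is even (\tref{atomic-gp}) and no chiral polyhedron has type $\{2,q\}$, whence $p\geq 3$ --- the group $\G^+(\calP)/\langle \s_2^{eq'}\rangle$ is the rotation group of a tight chiral or orientably regular polyhedron of type $\{p, eq'\}$. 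When $e$ is a proper divisor of $n$, we have $eq' < q$, so atomicity of $\calP$ rules out the chiral case; the quotient is therefore orientably regular, meaning that $\s_i \mapsto \s_i^{-1}$ extends to a group automorphism of $\G^+(\calP)/\langle \s_2^{eq'}\rangle$.

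Next I would push the defining relations through this enantiomorphism. Applying $\s_i \mapsto \s_i^{-1}$ to $\s_2^{-1}\s_1 = \s_1^i \s_2^{1+kq'}$ gives $\s_2 \s_1^{-1} = \s_1^{-i}\s_2^{-(1+kq')}$ in the quotient; comparing with the genuine relation $\s_2 \s_1^{-1} = \s_1^{-i}\s_2^{-1-akq'}$ and cancelling $\s_1^{-i}$ yields $\s_2^{(a-1)kq'} = \eps$. As $\s_2$ has order $eq'$ in the quotient, this says $eq' \mid (a-1)kq'$, i.e.\ $e \mid (a-1)k$. Hence every proper divisor $e$ of $n$ divides $(a-1)k$. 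On the other hand, $\calP$ is chiral, so \pref{gp-is-reg}(b) applied to the presentation $\GP{p,q}{i, 1+kq', -i, -1-akq'}$ --- in which $i_1 = i$ and $i_2 = -i$ already satisfy $i_1 \equiv -i_2$ --- forces $1+kq' \not\equiv 1+akq' \pmod q$, that is, $n \nmid (a-1)k$.

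Finally I would close with a divisibility argument. If $n$ had two distinct prime divisors $\ell_1$ and $\ell_2$, then $n/\ell_1$ and $n/\ell_2$ would be proper divisors of $n$, hence would both divide $(a-1)k$, and so $\lcm(n/\ell_1, n/\ell_2) \mid (a-1)k$. But a prime-by-prime comparison (using $\ell_1 \neq \ell_2$) shows $\lcm(n/\ell_1, n/\ell_2) = n$, giving $n \mid (a-1)k$ and contradicting the previous step. Thus $n = q/q'$ has at most one prime divisor, and being at least $3$ it is a prime power. I expect the only genuinely delicate point to be the step ``not chiral $\Rightarrow$ orientably regular, so the standard map $\s_i \mapsto \s_i^{-1}$ really is an automorphism of the quotient''; granting that, the remainder is bookkeeping with the subgroup lattice of $\langle \s_2\rangle$ and with elementary divisibility.
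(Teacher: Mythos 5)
Your proof is correct and follows essentially the same route as the paper: atomicity forces the quotients by $\langle \s_2^{eq'}\rangle$ (for proper divisors $e$ of $q/q'$) to be regular, regularity yields a congruence on the $\s_2$-exponents, and two coprime such congruences combine to contradict chirality. The paper packages this slightly differently---it works with the generic tight presentation $\GP{p,q}{i_1,j_1,i_2,j_2}$ rather than the refined one from \tref{atomic-gp}, uses only the two quotients coming from a coprime factorization $q/q' = bc$, and cites \pref{gp-is-reg}(a) where you spell out the enantiomorphism---but the substance is the same.
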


	\begin{proof}
	Since $\calP$ is tight, its automorphism group $\G^+(\calP)$ is a quotient of
	$\GP{p, q}{i_1, j_1, i_2, j_2}$ for some choice of $i_1, j_1, i_2,$ and $j_2$.
	Suppose that $q/q'$ is not a prime power, so that it has a nontrivial factorization as $q/q' = bc$, where
	$b$ and $c$ are coprime. Since $q = bcq'$ and $\langle \s_2^{q'} \rangle$ is normal, then so are
	$N_1 := \langle \s_2^{bq'} \rangle$ and $N_2 := \langle \s_2^{cq'} \rangle$. For $i \in \{1, 2\}$,
	let $\calQ_i$ be the polyhedron with $\G^+(\calQ_i) = \G^+(\calP) / N_i$. Since $\calP$
	is atomic, both $\calQ_1$ and $\calQ_2$ are regular. Now, $\G^+(\calQ_1)$ is a quotient
	of $\GP{p, bq'}{i_1, j_1, i_2, j_2}$ and $\G^+(\calQ_2)$ is a quotient of
	$\GP{p, cq'}{i_1, j_1, i_2, j_2}$. From \pref{gp-is-reg}(a), it follows that $i_1 \equiv -i_2$ (mod $p$),
	that $j_1 \equiv -j_2$ (mod $bq'$), and that $j_1 \equiv -j_2$ (mod $cq'$). From these
	last two congruences, it follows that $j_1 \equiv -j_2$ (mod $q$) (since $q = bcq'$).
	But then \pref{gp-is-reg}(b) implies that $\calP$ would be regular, so $q/q'$
	must be a prime power after all.
	\end{proof}

	\begin{lemma}
	\label{lem:special-rel}
	Let $\calP$ be an atomic chiral polyhedron of type $\{p, q\}$ with $q > p$ such that
	the core of $\langle \s_2 \rangle$ is $\langle \s_2^{q'} \rangle$. Then there exist
	integers $i$ and $k$ such that, for every integer $n$, 
	\[ \s_2 \s_1^{2n} \s_2^{-1} = \s_1^{n(i-1)} \s_2^{nkq'}. \]
	\end{lemma}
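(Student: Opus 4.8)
The plan is to work directly in the explicit presentation supplied by \tref{atomic-gp}. Since $\calP$ is atomic of type $\{p,q\}$ with $q>p$ and the core of $\langle\s_2\rangle$ is $\langle\s_2^{q'}\rangle$, we have $\G^+(\calP)=\GP{p,q}{i,1+kq',-i,-1-akq'}$ for suitable integers $i,k,a$ with $a^2\equiv 1$ (mod $q/q'$); these are exactly the $i$ and $k$ the lemma asks for. The idea is to prove, by induction on $n$ running in both directions, the slightly stronger identity
\[ \s_2\s_1^{2n}=\s_1^{n(i-1)}\s_2^{1+nkq'} \qquad (n\in\Z), \]
after which the stated formula follows simply by multiplying on the right by $\s_2^{-1}$.

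For the base cases I would compute $\s_2\s_1^{\pm 2}$ directly. For $n=1$, combining the standard relation $\s_2\s_1=\s_1^{-1}\s_2^{-1}$ (a consequence of $(\s_1\s_2)^2=\eps$) with $\s_2^{-1}\s_1=\s_1^i\s_2^{1+kq'}$ gives $\s_2\s_1^2=\s_1^{-1}\s_2^{-1}\s_1=\s_1^{i-1}\s_2^{1+kq'}$ at once. For $n=-1$ I would instead use the other defining relation $\s_2\s_1^{-1}=\s_1^{-i}\s_2^{-1-akq'}$, rewrite $\s_2^{-akq'}\s_1^{-1}=\s_1^{-1}\s_2^{-kq'}$ (this is where $a^2\equiv 1$ enters, via \pref{central-elt}: conjugating $\s_2^{akq'}$ by $\s_1$ returns $\s_2^{a^2kq'}=\s_2^{kq'}$), and then collapse $\s_2^{-1}\s_1^{-1}=\s_1\s_2$ to obtain $\s_2\s_1^{-2}=\s_1^{-(i-1)}\s_2^{1-kq'}$.

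For the inductive step I would pass from $\s_2\s_1^{2n}$ to $\s_2\s_1^{2(n\pm1)}$ by appending $\s_1^{\pm 2}$ on the right: after substituting the inductive hypothesis, the only factor that needs to be moved is a power $\s_2^{nkq'}$, which must be pushed past $\s_1^{\pm 2}$, and these commute by \pref{central-elt}; re-inserting the base-case identity for $\s_2\s_1^{\pm 2}$ then recombines everything into the required form. The one point that makes the bookkeeping clean — and the only real (mild) obstacle — is to arrange the manipulations so that one never has to push a power of $\s_2$ past the $\s_1^{i-1}$ factor, which one could not do without first knowing the parity of $i$; appending $\s_1^{\pm 2}$ on the far right and simplifying powers of $\s_2$ to multiples of $\s_2^{q'}$ before moving them achieves exactly this. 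Everything else is a short computation.
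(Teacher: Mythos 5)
Your proof is correct, and it starts from the same place as the paper's: the presentation $\G^+(\calP)=\GP{p,q}{i,1+kq',-i,-1-akq'}$ supplied by \tref{atomic-gp}, and the computation $\s_2\s_1^2=\s_1^{-1}\s_2^{-1}\s_1=\s_1^{i-1}\s_2^{1+kq'}$. Where you diverge is in the passage to general $n$. The paper observes that $\s_2\s_1^{2n}\s_2^{-1}=(\s_2\s_1^2\s_2^{-1})^n=(\s_1^{i-1}\s_2^{kq'})^n$ --- valid for all integers $n$ at once, since conjugation is a homomorphism --- and then expands this power as $\s_1^{n(i-1)}\s_2^{nkq'}$; that expansion requires commuting $\s_2^{kq'}$ past $\s_1^{i-1}$, which the paper justifies by combining \pref{central-elt} (that $\s_1^2$ commutes with $\s_2^{q'}$) with the externally cited fact that $i-1$ is even. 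Your two-sided induction reaches the same conclusion while only ever pushing powers of $\s_2^{q'}$ past $\s_1^{\pm 2}$, so it never needs the parity of $i$ and drops that external dependency; the price is a slightly longer argument and a separate base case at $n=-1$, which is where $a^2\equiv 1$ (mod $q/q'$) genuinely enters (in the paper's version that congruence is already absorbed into \pref{central-elt}). Both of your base cases and the inductive step check out; in particular the rearrangement $\s_2^{1+nkq'}\s_1^{2}=\s_2(\s_1^{2}\s_2^{nkq'})$ followed by substituting the $n=1$ identity does exactly what you claim, and right-multiplication by $\s_2^{-1}$ recovers the stated form of the lemma.
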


	\begin{proof}
	By \tref{atomic-gp}, we know that in $\G^+(\calP)$, the relation $\s_2^{-1} \s_1 = \s_1^i \s_2^{1+kq'}$
	holds for some $i$ and $k$. Then:
	\[ \s_2 \s_1^2 \s_2^{-1} = \s_1^{-1} \s_2^{-1} \s_1 \s_2^{-1} = \s_1^{i-1} \s_2^{kq'}. \]
	Now, since $\langle \s_2^{q'} \rangle$ is normal, \pref{central-elt} says that $\s_1^2$ commutes with $\s_2^{kq'}$.
	Furthermore, $i-1$ is even (as a consequence of \cite[Lemma 4.8]{tight3}). Therefore, for each $n$, 
	\[ \s_2 \s_1^{2n} \s_2^{-1} = (\s_1^{i-1} \s_2^{kq'})^n = \s_1^{n(i-1)} \s_2^{nkq'}. \qedhere \]
	\end{proof}
	
	\begin{lemma}
	\label{lem:possible-atomics}
	Let $\calP$ be an atomic chiral polyhedron of type $\{p, q\}$ with $q > p$ such that
	the core of $\langle \s_2 \rangle$ is $\langle \s_2^{q'} \rangle$. Then either
	$p = 2m^{\alpha}$ and $q = m^{\alpha + \beta}$ for some positive integers
	$\alpha, \beta$ and for some odd prime $m$, or $p$ and $q$
	are both powers of $2$.
	\end{lemma}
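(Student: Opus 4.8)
The plan is to squeeze the required arithmetic out of the presentation $\G^+(\calP) = \GP{p, q}{i, 1 + kq', -i, -1 - akq'}$ given by \tref{atomic-gp}, combined with the fact $q/q' = \ell^{\gamma}$ for a prime $\ell$ and $\gamma \ge 1$ (\lref{q-over-qp}), the conjugation identity $\s_2 \s_1^{2n} \s_2^{-1} = \s_1^{n(i-1)} \s_2^{nkq'}$ (\lref{special-rel}), and the structural information already in hand: $p$ is even, $i-1$ is even, $a^2 \equiv 1 \pmod{\ell^\gamma}$ with $a \not\equiv 1$, and $\langle \s_1 \rangle$ is core-free (\cref{s1-core-free}). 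The goal is to force everything onto a single prime.

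The first step is to show $\ell \mid p$ by producing a normal abelian subgroup. By \pref{central-elt}, $\s_1^2$ commutes with $\s_2^{q'}$, so $L := \langle \s_1^2 \rangle \langle \s_2^{q'} \rangle \cong \Z_{p/2} \times \Z_{\ell^\gamma}$; it is normal of index $2q'$ in $\G^+(\calP)$, since $\s_1$ normalizes each factor while $\s_2 \s_1^2 \s_2^{-1} = \s_1^{i-1} \s_2^{kq'} \in L$ (using that $i-1$ is even). Conjugation by $\s_2$ is then an automorphism of $L$ sending the generator $\s_1^2$ to $(\s_1^2)^{(i-1)/2} (\s_2^{q'})^{k}$; since $\s_1^2$ has order $p/2$, this image must also have order dividing $p/2$, which forces $\ell^\gamma \mid (p/2)k$. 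But $\ell^\gamma \nmid k$: otherwise $\langle \s_1^2 \rangle$ would be normal, hence contained in the trivial core of $\langle \s_1 \rangle$, forcing $p = 2$ --- impossible, since every polyhedron of type $\{2, q\}$ is regular while $\calP$ is chiral. Writing $\ell^e \| k$ with $e < \gamma$, we obtain $\ell^{\gamma - e} \mid p/2$; in particular $\ell \mid p$, and when $\ell = 2$ we even get $4 \mid p$.

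The second step passes to the regular quotient and invokes the classification of tight orientably regular polyhedra from \cite{tight2, tight3}. By \pref{inverse-quo-crit} and atomicity, $\G^+(\calP)/\langle \s_2^{q'} \rangle$ is a tight orientably regular polyhedron of type $\{p, q'\}$ whose copy of $\langle \s_2 \rangle$ is core-free, and by \pref{gp-is-reg}(a) together with \pref{gp-props2}(a) its group is $\GP{p, q'}{i, 1, -i, -1}$. The classification then restricts $(p, q')$ sharply: $q'$ is a prime power $r^{t}$ with $t \ge 1$ (the core of $\langle\s_2\rangle$ cannot be all of $\langle\s_2\rangle$ without forcing $p = 2$); if $r$ is odd then $p$ is an even divisor of $2r^{t}$; and if $r = 2$ then $p$ is a power of $2$. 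Combining with $\ell \mid p$ from Step~1: if $\ell$ is odd, then $r$ cannot be $2$ (a power of $2$ has no odd prime divisor), so $r$ is odd and $\ell \mid p \mid 2r^{t}$ gives $r = \ell$; hence $q = r^{t}\ell^{\gamma} = \ell^{t+\gamma}$, and from $p$ even with $\ell \mid p \mid 2\ell^{t}$ we get $p = 2\ell^{\alpha}$ for some $1 \le \alpha \le t$, while $q > p$ forces $t + \gamma > \alpha$, so $q = \ell^{\alpha+\beta}$ with $\beta \ge 1$. If instead $\ell = 2$, then $r$ must also be $2$: otherwise $r$ is odd, $q'$ is odd, $p \mid 2q'$ gives $v_2(p) \le 1$, contradicting $4 \mid p$ from Step~1; hence $q = q'\ell^{\gamma}$ and $p$ (a power of $2$ since $r = 2$) are both powers of $2$. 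This exhausts the two stated alternatives.

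The main obstacle is the classification input used in Step~2 --- that a core-free $\langle \s_2 \rangle$ in a tight orientably regular polyhedron forces $q'$ to be a prime power and ties the prime factors of $p$ to it. Establishing this is essentially the content of the earlier papers \cite{tight2, tight3}; the reason one cannot simply dualize is that it is $\langle \s_1 \rangle$, not $\langle \s_2 \rangle$, that is core-free here, so there is no ``core of $\langle \s_1 \rangle$'' quotient to exploit, and the only readily available normal subgroups are the $\langle \s_2^d \rangle$ with $q' \mid d \mid q$ (whose quotients must all be regular, which --- together with the analysis of $a$ modulo $\ell^\gamma$ --- yields the further relation $\ell^{\gamma-1} \mid k(1-a)$, but not by itself the statement that $q'$ is a prime power).
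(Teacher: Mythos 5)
Your Step 1 is correct as far as it goes, but it is too weak, and your Step 2 rests on a classification statement that is false. The abelian subgroup $L = \langle \s_1^2\rangle\langle\s_2^{q'}\rangle$ argument does validly yield $\ell^{\gamma-e} \mid p/2$ (hence $\ell \mid p$, and $4 \mid p$ when $\ell = 2$), but it says nothing about the \emph{other} prime factors of $p$. You then try to control those via the claim that a tight orientably regular polyhedron of type $\{p, q'\}$ with $\langle\s_2\rangle$ core-free must have $q'$ a prime power $r^t$ with $p \mid 2r^t$. That claim is not what \cite{tight3} proves, and it is false: take $q' = 15$ and $p = 30$. By \pref{regular-gp1} the group is $\GP{30,15}{3,1,-3,-1}$; there $\s_2\s_1^{-2} = \s_1^{-3}\s_2^{-1}\s_1^{-1} = \s_1^{-3}(\s_1\s_2) = \s_1^{-2}\s_2$, so $\s_1^2$ is central, and $\s_1\s_2^k\s_1^{-1} = (\s_1^{-2}\s_2^{-1})^k = \s_1^{-2k}\s_2^{-k}$, which lies in $\langle\s_2\rangle$ only when $15 \mid k$. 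So $\langle\s_2\rangle$ is core-free and $q' = 15$ is not a prime power. Consequently your argument cannot exclude a hypothetical atomic polyhedron with, say, $p = 30$, $q' = 15$, $q = 15\cdot 3^{\gamma}$: your Step 1 only demands $\ell = 3 \mid p$, which holds, and the corrected regular classification ($p = 2q'$ for $q'$ odd core-free) does not force $q'$ to be a power of $3$.

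The missing idea is the paper's much stronger use of core-freeness of $\langle\s_1\rangle$: substituting $n = q/q'$ into \lref{special-rel} gives $\s_2\s_1^{2q/q'}\s_2^{-1} = \s_1^{q(i-1)/q'}$, and since $i-1$ is even this makes $\langle\s_1^{2q/q'}\rangle$ normal, hence trivial, hence $p \mid 2q/q'$. Combined with \lref{q-over-qp} this immediately pins $p = 2m^{\alpha}$ with $m^{\beta} = q/q'$ --- bounding \emph{all} prime factors of $p$ at once, which is exactly what your $L$-argument fails to do. Only after that does the paper invoke the (true) facts from \cite{tight3} that $q'$ divides $p$ and that $q'$ contains either none or all of each odd prime power in $p$, leaving $q' \in \{2, m^{\alpha}\}$, with $q' = 2$ eliminated by showing it forces $\G^+(\calP) = \GP{p,q}{-1,1,1,-1}$ and hence regularity. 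You would need to replace your Step 2 input with this argument (or something equivalent) for the proof to close.
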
			
	
	\begin{proof}
	Taking $n = q/q'$ in \lref{special-rel} yields the relation $\s_2 \s_1^{2q/q'} \s_2^{-1} = \s_1^{q(i-1)/q'}$.
	In particular, $\langle \s_1^{2q/q'} \rangle$ is normal. However, $\langle \s_1 \rangle$
	is core-free in $\G^+(\calP)$, and so this normal subgroup must be trivial. It follows that
	$p$ divides $2q/q'$. Now, \lref{q-over-qp} tells us that $q/q'$ is a prime power; say $q/q' = m^{\beta}$.
	Since $p$ divides $2q/q'$ and $p$ must be even (by \tref{atomic-gp}), it follows that
	$p = 2m^{\alpha}$ for some $\alpha$ satisfying $1 \leq \alpha \leq \beta$.
	
	From \cite[Props. 4.11 and 4.12]{tight3}, $q'$ must divide $p$; in fact, for each odd prime dividing $p$, either
	$q'$ is coprime to that prime, or it contains the full power of that prime. If $m = 2$, then $p$
	is a power of $2$, which forces $q'$ and therefore $q$ to be a power of $2$. Suppose instead
	that $m$ is an odd prime.
	Since $q' < p$, we get that either $q' = 2$ or $q' = m^{\alpha}$. If $q' = 2$, then 
	$\langle \s_2^2 \rangle$ is normal, which implies that $\s_1^{-1} \s_2^2 \s_1
	= \s_2^{2a}$, with $a^2 \equiv 1$ (mod $q/2$) and $a \not \equiv 1$. Since $q/2$ is a power
	of an odd prime, that implies that $a \equiv -1$, and so $\s_1^{-1} \s_2^2 \s_1 = \s_2^{-2}$. Then
	\[ \s_1 \s_2^{-2} = \s_2^2 \s_1 = \s_2 \s_1^{-1} \s_2^{-1}, \]
	and so $\s_2 \s_1^{-1} = \s_1 \s_2^{-1}$. Conjugating by $\s_1$ yields $\s_2^{-1} \s_1 = \s_1^{-1} \s_2$,
	and so $\G^+(\calP) = \GP{p, q}{-1, 1, 1, -1}$, from which it follows from \pref{gp-is-reg}(b) that
	$\calP$ is regular. So $q'$ must be $m^{\beta}$ instead, and thus $q = m^{\alpha+\beta}$.
	\end{proof}

	We are now ready to find restrictions on the parameters $\alpha$ and $\beta$, and to explicitly describe
	the automorphism groups. We start by focusing on the case where $p$ is twice the power of an odd prime.

	\begin{theorem}
	\label{thm:odd-atomic-structure}
	Let $\calP$ be an atomic chiral polyhedron of type $\{2m^{\alpha}, m^{\alpha + \beta}\}$,
	with $m$ an odd prime. Then $\alpha = 1$, and
	$\G^+(\calP) = \GP{2m, m^{\beta+1}}{3, 1+k m^{\beta}, -3, -1+k m^{\beta}}$ for some $k$
	with $1 \leq k \leq m-1$.
	\end{theorem}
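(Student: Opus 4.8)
The plan is to start from the general structure already established and then squeeze the parameters. By \tref{atomic-gp} we know $\G^+(\calP) = \GP{2m^{\alpha}, m^{\alpha+\beta}}{i, 1+kq', -i, -1-akq'}$ with $q' = m^{\beta}$ (using \lref{possible-atomics}, which identifies $q'$ as $m^{\beta}$ in the odd-prime case), where $a^2 \equiv 1 \pmod{m^{\alpha}}$ and $a \not\equiv 1$. Since $q/q' = m^{\beta}$ is a power of an odd prime, the congruence $a^2 \equiv 1 \pmod{m^{\alpha}}$ forces $a \equiv \pm 1 \pmod{m^{\alpha}}$, and $a \not\equiv 1$ then pins down $a \equiv -1 \pmod{m^{\alpha}}$. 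So the relation governing how $\s_1$ conjugates $\s_2^{q'}$ is $\s_1 \s_2^{q'} = \s_2^{-q'} \s_1$, and the presentation simplifies to $\GP{2m^{\alpha}, m^{\alpha+\beta}}{i, 1+km^{\beta}, -i, -1+km^{\beta}}$.

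Next I would identify $i$. The quotient of $\G^+(\calP)$ by $\langle \s_2^{q'} \rangle$ is a tight orientably regular polyhedron of type $\{2m^{\alpha}, m^{\beta}\}$ with $\langle \ol{\s_2}\rangle$ core-free; in particular the rotation group is $\GP{2m^{\alpha}, m^{\beta}}{i, 1, -i, -1}$. The classification results from \cite{tight2}/\cite{tight3} (via \pref{regular-gp1}, since $m^{\beta}$ is odd and $2m^{\alpha}$ is an even divisor of $2m^{\beta}$ when $\alpha \le \beta$ — and $\alpha \le \beta$ holds by \lref{possible-atomics}) say this group must be $\GP{2m^{\alpha}, m^{\beta}}{3, 1, -3, -1}$, and moreover it is only a polyhedron — i.e. $\langle\s_1\rangle \cap \langle\s_2\rangle$ is trivial with $\s_1$ of order $2m^{\alpha}$ — when the relevant divisibility is satisfied; in fact uniqueness forces $i \equiv 3 \pmod{2m^{\alpha}}$, perhaps after replacing $\calP$ by its enantiomorph (which swaps $i \leftrightarrow -i$, so one should check whether $3$ or $-3$ is the canonical choice and record that the statement is up to enantiomorphism implicitly). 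So $\G^+(\calP) = \GP{2m^{\alpha}, m^{\alpha+\beta}}{3, 1+km^{\beta}, -3, -1+km^{\beta}}$ and, reducing $k$ modulo $m$ using that $q/q' = m^{\beta}$ (so $1+km^{\beta}$ only depends on $k \bmod m$), we may take $1 \le k \le m-1$; note $k=0$ is excluded because it would give the regular group $\GP{2m^{\alpha},m^{\alpha+\beta}}{3,1,-3,-1}$ contradicting chirality.

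The heart of the argument — and the step I expect to be the main obstacle — is showing $\alpha = 1$. The idea is to use \lref{special-rel}: there are integers $i=3$ and $k$ so that $\s_2 \s_1^{2n} \s_2^{-1} = \s_1^{2n} \s_2^{nkm^{\beta}}$ for all $n$ (here $i - 1 = 2$). Taking $n = m$, and using that $\s_1$ has order $2m^{\alpha}$, one gets $\s_2 \s_1^{2m} \s_2^{-1} = \s_1^{2m} \s_2^{mkm^{\beta}}$. If $\alpha \ge 2$ then $\s_1^{2m}$ is a nontrivial element of $\langle \s_1 \rangle$, and conjugating it by $\s_2$ lands it outside $\langle \s_1\rangle$ unless the $\s_2$-exponent vanishes, i.e.\ $mkm^{\beta} \equiv 0 \pmod{m^{\alpha+\beta}}$, which says $m^{\alpha-1} \mid k$; since $1 \le k \le m-1 < m^{\alpha-1}$ when $\alpha \ge 2$, this is impossible unless $k=0$ — contradiction with chirality. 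Hence $\langle \s_1^{2m}\rangle$ would have to be normal, but $\langle\s_1\rangle$ is core-free (\cref{s1-core-free}), forcing $\s_1^{2m} = \eps$, i.e.\ $2m^{\alpha} \mid 2m$, i.e.\ $\alpha \le 1$. The delicate point I'd want to nail down carefully is the claim that $\s_2 \s_1^{2m}\s_2^{-1} \notin \langle\s_1\rangle$ when the $\s_2$-exponent is nonzero: this follows from tightness (every element has a unique normal form $\s_1^a\s_2^b$ with $0 \le a < 2m^{\alpha}$, $0 \le b < m^{\alpha+\beta}$), so I would invoke that uniqueness explicitly rather than wave at it. With $\alpha = 1$ the presentation is exactly the asserted one, completing the proof.
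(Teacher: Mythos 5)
Your outline tracks the paper's (invoke \tref{atomic-gp}, deduce $a=-1$ because $q/q'$ is an odd prime power, get $i=3$ from a regular quotient via \pref{regular-gp1}, then use \lref{special-rel} with $n=m$ and core-freeness of $\langle \s_1 \rangle$ to kill $\alpha \geq 2$), but the decisive step is broken, and two problems compound. First, the core of $\langle \s_2 \rangle$ is $\langle \s_2^{m^{\alpha}} \rangle$, not $\langle \s_2^{m^{\beta}} \rangle$: \pref{normal-props} forces $q' < p = 2m^{\alpha}$, and in \lref{possible-atomics} the parameter $\beta$ is defined by $q/q' = m^{\beta}$, so $q' = m^{\alpha}$. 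Hence $j_1 = 1 + kq' = 1 + km^{\alpha}$ with $k$ mattering modulo $m^{\beta}$, and your reduction to $1 \leq k \leq m-1$ at that stage (``$k$ only matters mod $m$'') is unjustified. Second, and more seriously, your dichotomy for proving $\alpha = 1$ fails in the branch where the $\s_2$-exponent of $\s_2 \s_1^{2m} \s_2^{-1} = \s_1^{m(i-1)} \s_2^{mkq'}$ is nonzero: there is no contradiction in this conjugate lying outside $\langle \s_1 \rangle$. Nothing forces $\langle \s_1 \rangle$ to be normal --- on the contrary, \cref{s1-core-free} says it is core-free, so it certainly is not normal --- and your ``hence $\langle \s_1^{2m} \rangle$ would have to be normal'' is asserted, not derived.

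The missing ingredient is a second use of atomicity. Since $\langle \s_2^{q'} \rangle$ is normal, so is $\langle \s_2^{q/m} \rangle$, and because $\calP$ is atomic the quotient by the latter is \emph{regular}; by \pref{gp-is-reg}(a) this forces $1 + kq' \equiv 1 - kq' \pmod{q/m}$, i.e.\ $mkq' \equiv 0 \pmod q$. Only with that in hand does $n = m$ in \lref{special-rel} give $\s_2 \s_1^{2m} \s_2^{-1} = \s_1^{m(i-1)}$, so that $\langle \s_1^{2m} \rangle$ is normal, hence trivial by core-freeness, hence $2m^{\alpha} \mid 2m$ and $\alpha = 1$. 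The same congruence is exactly what pins down $k$ at the end: it says $kq'$ is a multiple of $q/m = m^{\beta}$, so $kq' = k'm^{\beta}$ with $1 \leq k' \leq m-1$ (the value $k' = 0$ being excluded by \pref{gp-is-reg}(b)); without it the stated form of $j_1$ does not follow. Your identification of $i = 3$ is fine, and the worry about enantiomorphs is unnecessary, since the enantiomorph of $\GP{p,q}{3, j_1, -3, j_2}$ is $\GP{p,q}{3, -j_2, -3, -j_1}$, which still has $i = 3$.
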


	\begin{proof}
	Let $p = 2m^{\alpha}$ and $q=m^{\alpha+\beta}$, and let $\langle \s_2^{q'} \rangle$
	be the core of $\langle \s_2 \rangle$. 
	By \tref{atomic-gp}, $\G^+(\calP) = \GP{p,q}{i, 1+kq', -i, -1-akq'}$ for
	some $k$ and some $a \neq 1$ such that $a^2 \equiv 1$ (mod $q/q'$). 
	Since $q/q'$ is a power of an odd prime, it follows that $a = -1$.
	
	Next, we note that since $\langle \s_2^{q'} \rangle$ is normal, so is $\langle \s_2^{q/m} \rangle$.
	Since $\calP$ is atomic, the quotient of $\G^+(\calP)$ by $\langle \s_2^{q/m} \rangle$ is regular, and so
	$1+kq' \equiv 1 - kq'$ (mod $q/m$), from which it follows that $2kq' \equiv 0$ (mod $q/m$).
	That means that $q/m$ divides $kq'$, and so $mkq' \equiv 0$ (mod $q$).
	Therefore, taking $n = m$ in \lref{special-rel}, we get that
	\[ \s_2 \s_1^{2m} \s_2^{-1} = \s_1^{m(i-1)}. \]
	Then since $\langle \s_1 \rangle$ is core-free, it follows that $p$ divides $2m$, which means that $\alpha = 1$. 
	So $\calP$ is of type $\{2m, m^{\beta+1}\}$. Furthermore, since $\langle \s_2^{q/m} \rangle$ is normal,
	we see that $\calP$ covers a tight orientably regular polyhedron $\calQ$ of type $\{2m, m^{\beta}\}$.
	By \pref{regular-gp1}, it follows that $\G^+(\calQ) = \GP{2m, m^{\beta}}{3, 1, -3, -1}$. So in
	$\G^+(\calQ)$, we have that $\s_2^{-1} \s_1 = \s_1^3 \s_2$, but also $\s_2^{-1} \s_1 = \s_1^i \s_2^{1+kq'}$
	since $\calP$ covers $\calQ$. It follows that $i = 3$.

	It remains to determine $k$. Since $p = 2m$ and $q' < p$, it follows that $q' = m$.
	From before, we have that $mkq' \equiv 0$ (mod $q$), and thus $m^{\beta-1}$ divides $k$.
	Since we can take $k$ modulo $m^{\beta}$, it follows that $k = k' m^{\beta-1}$ for some $k'$
	between $0$ and $m-1$. However, if $k' = 0$ then \pref{gp-is-reg}(b) says that we get a regular polyhedron.
	So $1 \leq k' \leq m-1$, and $kq' = k' m^{\beta}$. The result follows.
	\end{proof}
	
	Note that the groups in \tref{odd-atomic-structure} are precisely those that we had
	in \tref{odd-atomics}.

	It remains to narrow things down for the case where $m = 2$. We start with
	an immediate consequence of \cite[Thm. 4.13]{tight3}.

	\begin{proposition}
	\label{prop:reg-core-free}
	Let $\calP$ be a tight orientably regular polyhedron of type $\{p, q\}$ with
	$\langle \s_1 \rangle$ core-free and with $p$ and $q$ both powers of $2$.
	Then $\G^+(\calP)$ is one of the following:
	\begin{align*}
	& \GP{2, 2^{\alpha}}{-1, 1, 1, -1} \\
	& \GP{4, 2^{\alpha}}{-1, 1 + 2^{\alpha-1}, 1, -1 + 2^{\alpha-1}} \\
	& \GP{2^{\alpha-1}, 2^{\alpha}}{-1, -3, 1, 3} \\
	& \GP{2^{\alpha-1}, 2^{\alpha}}{-1, -3 + 2^{\alpha-1}, 1, 3 + 2^{\alpha-1}}.
	\end{align*}
	\end{proposition}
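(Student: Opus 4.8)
Since the proposition is billed as an immediate consequence of \cite[Thm.~4.13]{tight3}, the plan is to read it off from that classification after a little preparatory work with the normality criteria above. First I would fix the general shape of $\G^+(\calP)$: as $\calP$ is tight and orientably regular, \pref{gp-is-reg}(a) gives $\G^+(\calP) = \GP{p,q}{i,j,-i,-j}$ for some $i$ and $j$. By \pref{gp-props2}(b) the subgroup $\langle \s_1^{i+1} \rangle$ is normal in $\G^+(\calP)$, and since $\langle \s_1 \rangle$ is core-free it must be trivial; as $\s_1$ has order $p$, this forces $i \equiv -1 \pmod p$. So $\G^+(\calP) = \GP{p,q}{-1, j, 1, -j}$ for some $j$, with $p$ and $q$ both powers of $2$.

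Next I would appeal to \cite[Thm.~4.13]{tight3}, which enumerates the rotation groups of all tight orientably regular polyhedra whose Schl\"afli symbol has both entries a power of $2$, together with the information of which of them have $\langle \s_1 \rangle$ (equivalently, $\langle \rho_0\rho_1 \rangle$) core-free. Transcribing that list into the $\GP{\cdot}{\cdot}$ notation via $\s_1 = \rho_0\rho_1$ and $\s_2 = \rho_1\rho_2$ (exactly as in the proof of \pref{regular-gp1}), and retaining only the entries compatible with the constraint $i \equiv -1 \pmod p$ from the previous paragraph, leaves precisely the four displayed families. Two of them are already visible in the present paper: using $-2^{\alpha-1} \equiv 2^{\alpha-1} \pmod{2^\alpha}$, the dual of $\GP{2^\alpha, 4}{-1+2^{\alpha-1}, 1, 1-2^{\alpha-1}, -1}$ from \pref{regular-gp2} is the family $\GP{4, 2^\alpha}{-1, 1+2^{\alpha-1}, 1, -1+2^{\alpha-1}}$, and the dual of $\GP{2^\alpha, 2^{\alpha-1}}{3,1,-3,-1}$ is the family $\GP{2^{\alpha-1}, 2^\alpha}{-1, -3, 1, 3}$. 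The case $p = 2$ supplies the family $\GP{2, 2^\alpha}{-1, 1, 1, -1}$: for each $q$ there is only one polyhedron of type $\{2, q\}$, it is orientably regular, and one checks directly that $\langle \s_1 \rangle$ is core-free there.

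It then remains to tidy up the bookkeeping: passing to the enantiomorphic form of a group changes neither the underlying polyhedron nor, by \pref{gp-props2}(b), the core-freeness of $\langle \s_1 \rangle$, so no additional families arise that way; and for small $\alpha$ some of the four families coincide, which causes no trouble. I expect the only real obstacle to be this careful transcription: reconciling the $\rho_i$-presentations used in \cite{tight3} with the $\GP{p,q}{i_1,j_1,i_2,j_2}$ parametrization, making sure the core-free hypothesis is imposed on $\langle \s_1 \rangle$ and not on $\langle \s_2 \rangle$, and confirming for each surviving family that $\langle \s_1 \rangle$ is genuinely core-free in $\G^+(\calP)$ rather than merely that the one normal subgroup flagged by \pref{gp-props2}(b) vanishes --- this last fact being exactly what one imports from \cite[Thm.~4.13]{tight3}.
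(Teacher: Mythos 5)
Your proposal is correct and takes essentially the same route as the paper, which offers no proof at all for this proposition and simply presents it as an immediate consequence of \cite[Thm.~4.13]{tight3}; you defer the substantive classification to that same external theorem. Your added preliminary step --- that \pref{gp-is-reg}(a) together with \pref{gp-props2}(b) and core-freeness of $\langle \s_1 \rangle$ forces $i \equiv -1 \pmod p$ --- is a sound consistency check but does not change the nature of the argument.
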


	Now we can classify the atomic chiral polyhedra of type $\{2^{\alpha}, 2^{\beta}\}$
	with the following two results.
	
	\begin{lemma}
	\label{lem:pow-of-2-atomics}
	Let $\calP$ be an atomic chiral polyhedron of type $\{2^{\alpha}, 2^{\beta}\}$ with
	$\beta > \alpha$. Then the core of $\langle \s_2 \rangle$ in $\G^+(\calP)$ is $\langle \s_2^4 \rangle$,
	and for some $k$ and some $a \neq 1$ satisfying
	$a^2 \equiv 1$ (mod $2^{\beta-2}$), we have
	\[ \G^+(\calP) = \GP{2^{\alpha}, 2^{\beta}}{-1 + 2^{\alpha-1}, 1+4k, 1 + 2^{\alpha-1}, -1 - 4ak}. \]
	\end{lemma}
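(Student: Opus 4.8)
The plan is to combine the general structural results already established---principally \tref{atomic-gp}, \lref{q-over-qp}, \lref{possible-atomics}, and \pref{reg-core-free}---to pin down the core of $\langle \s_2 \rangle$ and then read off the presentation. First I would invoke \lref{possible-atomics}: since $\calP$ is atomic of type $\{2^\alpha, 2^\beta\}$ with $2^\beta > 2^\alpha$, both $p$ and $q$ are powers of $2$, and by \cref{s1-core-free} the subgroup $\langle \s_1 \rangle$ is core-free. Write the core of $\langle \s_2 \rangle$ as $\langle \s_2^{q'} \rangle$; by \lref{q-over-qp} the quotient $q/q' = 2^\beta/q'$ is a prime power, which is automatic here, so I get $q' = 2^\gamma$ for some $\gamma < \beta$. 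By \tref{atomic-gp}, $p = 2^\alpha$ is even (consistent) and
\[
\G^+(\calP) = \GP{2^\alpha, 2^\beta}{i, 1 + k 2^\gamma, -i, -1 - a k 2^\gamma}
\]
for some $i, k$ and some $a$ with $a^2 \equiv 1 \pmod{2^{\beta - \gamma}}$ and $a \neq 1$.

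The crux is showing $\gamma = 2$, i.e. the core is exactly $\langle \s_2^4 \rangle$. For the lower bound $\gamma \le 2$: taking the quotient by $\langle \s_2^{q'} \rangle$ yields, since $\calP$ is atomic, a tight orientably regular polyhedron of type $\{2^\alpha, 2^\gamma\}$ in which (the image of) $\langle \s_2 \rangle$ is core-free. Dualizing and applying \pref{reg-core-free} to that quotient forces $\langle \s_1 \rangle$-core-free regular polyhedra of type $\{2^\gamma, 2^\alpha\}$ into a short list; since $\beta > \alpha \ge \gamma$ is not yet known to hold, I instead apply \pref{reg-core-free} directly after noting the quotient has $\langle\s_2\rangle$ core-free, which (taking duals) restricts its $\s_1$-order, and the presentations in \pref{reg-core-free} all have the relevant exponent at most $2^{\gamma}$ with $\gamma \le 2$ when $p = 2^\alpha \ge 4$; the edge cases $q' = 1$ and $q' = 2$ must be ruled out separately. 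The case $q' = 1$ is handled exactly as in \pref{inverse-quo-crit}: it would force $\s_1^2 = \eps$, contradicting $\alpha \ge 2$ (and $\alpha = 1$ gives only regular polyhedra of type $\{2, q\}$). The case $q' = 2$ is the analogue of the argument at the end of \lref{possible-atomics}: with $a^2 \equiv 1 \pmod{2^{\beta-1}}$ one does not immediately get $a \equiv -1$ (unlike the odd-prime case), so here I expect to need a finer analysis---this is the main obstacle. I would argue that if $q' = 2$, the quotient by $\langle \s_2^2\rangle$ is a tight orientably regular polyhedron of type $\{2^\alpha, 2\}$, which is degenerate, and use \pref{reg-core-free} together with the fact that $\langle\s_2\rangle$ maps to a core-free cyclic group of order $2$ in a $2$-group---but a normal subgroup of order $2$ in a $2$-group is central, hence not core-free unless the whole group is trivial. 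That contradiction eliminates $q' = 2$, and similarly I must check $q' \ge 8$ cannot occur: if $8 \mid q'$ then the quotient by $\langle \s_2^{q'}\rangle$ is regular of type $\{2^\alpha, 2^\gamma\}$ with $\gamma \ge 3$, $\langle \s_2\rangle$ core-free, and \pref{reg-core-free} (applied to the dual) shows the only such groups have $\s_1$ of order dividing $4$ unless the type is $\{2^{\delta-1}, 2^\delta\}$---but that would need $2^\gamma > 2^\alpha$, while we also have the normal subgroup $\langle \s_1^{2q/q'}\rangle$ from \lref{special-rel} forcing $2^\alpha \mid 2q/q' = 2^{\beta - \gamma + 1}$, and pushing these inequalities together with atomicity yields $\gamma = 2$.

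Once $q' = 4$ is established, the remaining parameters follow mechanically. By \tref{atomic-gp} the group is $\GP{2^\alpha, 2^\beta}{i, 1 + 4k, -i, -1 - 4ak}$ with $a^2 \equiv 1 \pmod{2^{\beta-2}}$ and $a \neq 1$. To identify $i$: the quotient by $\langle \s_2^4 \rangle$ is a tight orientably regular polyhedron of type $\{2^\alpha, 4\}$ with $\langle \s_2 \rangle$ core-free; dualizing and consulting \pref{reg-core-free} with $q = 2^\alpha$ there (of the dual), the only option matching type $\{4, 2^\alpha\}$ is $\GP{4, 2^\alpha}{-1, 1+2^{\alpha-1}, 1, -1+2^{\alpha-1}}$, so $\overline{\s_1}^{-1}$ and $\overline{\s_2}^{-1}$ of the dual correspond to $\s_2, \s_1$; unwinding the duality relation ($\s_i \mapsto \s_{3-i}^{-1}$) tells us $i \equiv -1 + 2^{\alpha-1} \pmod{2^\alpha}$. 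Writing $k$ for the free multiplier and noting $a \ne 1$ ensures chirality via \pref{gp-is-reg}(b), I conclude
\[
\G^+(\calP) = \GP{2^\alpha, 2^\beta}{-1 + 2^{\alpha-1}, 1 + 4k, 1 + 2^{\alpha-1}, -1 - 4ak},
\]
which is exactly the claimed form. I would close by remarking that $i \not\equiv -1 - 4ak \cdot 0$ type sanity checks and the evenness of $i - 1$ (used implicitly via \lref{special-rel}) are all consistent with $i = -1 + 2^{\alpha-1}$.
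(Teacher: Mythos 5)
Your overall skeleton (use \tref{atomic-gp} to get the shape $\GP{2^{\alpha},2^{\beta}}{i,1+kq',-i,-1-akq'}$, then consult the dual of \pref{reg-core-free} to pin down the remaining parameters) is the right one, but you run it in the opposite order from the paper, and the step you yourself flag as ``the main obstacle'' --- showing the core is exactly $\langle \s_2^4\rangle$ --- is not actually closed. Two of your exclusions fail. For $q'=2$: a core-free subgroup of order $2$ in a $2$-group is not a contradiction; non-normal order-$2$ subgroups abound (the quotient of type $\{2^{\alpha},2\}$ is dihedral of order $2^{\alpha+1}$, and its $\langle \ol{\s_2}\rangle$ genuinely is core-free), so your ``normal subgroup of order 2 is central, hence not core-free'' argument conflates core-freeness with normality and proves nothing. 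For $q'\geq 8$: the relevant entries of \pref{reg-core-free} have type $\{2^{\delta-1},2^{\delta}\}$, which after dualizing matches a quotient of type $\{2^{\alpha},q'\}$ with $q'=2^{\alpha-1}<2^{\alpha}$ --- not $q'>2^{\alpha}$ as you assert --- and your fallback inequality $2^{\alpha}\mid 2^{\beta-\gamma+1}$ with $\gamma=\alpha-1$ only says $2\alpha\leq\beta+2$, which holds whenever $\beta\geq 2\alpha-2$ and so rules out nothing. The case $q'=2^{\alpha-1}$ with $\alpha\geq 4$ is a live possibility on your argument and must be eliminated some other way.

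The paper avoids this impasse by determining $i$ \emph{before} $q'$. Since $\langle \s_2^{2^{\beta-1}}\rangle$ is contained in the core, it is normal, and atomicity forces the quotient by it to be regular; this puts $\G^+(\calP)$ in the form $\GP{2^{\alpha},2^{\beta}}{i,\,j+k_1 2^{\beta-1},\,-i,\,-j+k_2 2^{\beta-1}}$ with $\{k_1,k_2\}=\{0,1\}$. A short computation ($\s_2^{-1}\s_1^{i+1}=\s_1^{i+1}\s_2^{-1}\s_2^{2^{\beta-1}}$, hence $\s_2^{-1}\s_1^{2i+2}\s_2=\s_1^{2i+2}$) combined with core-freeness of $\langle\s_1\rangle$ (\cref{s1-core-free}) gives $i\equiv -1 \pmod{2^{\alpha-1}}$, and $i=-1$ is excluded because it would collapse the order of $\s_2$ to $2^{\beta-1}$. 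Only with $i=2^{\alpha-1}-1$ in hand does one consult the dual of \pref{reg-core-free}: that value of $i$ is incompatible with the entries giving $q'\in\{2,\,2^{\alpha-1}\}$ (for $\alpha\geq 4$) and selects the $\{4,2^{\alpha}\}$ entry, whence $q'=4$. Your final paragraph, which reads off $i=-1+2^{\alpha-1}$ from the $\{4,2^{\alpha}\}$ entry, is fine as far as it goes, but it presupposes $q'=4$; without an independent derivation of either $i$ or $q'$, the argument is circular at its center.
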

	
	\begin{proof}
	Let $p = 2^{\alpha}$ and let $q = 2^{\beta}$. Let $q'$ be the integer such that
	$\langle \s_2^{q'} \rangle$ is the core of $\langle \s_2 \rangle$.
	By \tref{atomic-gp}, $\G^+(\calP) = \GP{p, q}{i, 1+kq', -i, -1-akq'}$ for some integers
	$i, k,$ and $a$. Now, since $q$ is a power of 2 and $\langle \s_2^{q'} \rangle$ is a proper normal
	subgroup, it follows that $\langle \s_2^{2^{\beta-1}} \rangle$ is normal in $\G^+(\calP)$.
	Since $\calP$ is atomic, the quotient by this normal subgroup must be regular,
	with group $\GP{2^{\alpha}, 2^{\beta-1}}{i, j, -i, -j}$ for some $i$ and $j$.
	It follows that $\G^+(\calP) = \GP{2^{\alpha}, 2^{\beta}}{i, j+k_1 2^{\beta-1}, -i,
	-j+k_2 2^{\beta-1}}$, with $k_1$ and
	$k_2$ each either 0 or 1. Furthermore, we cannot have $k_1 = k_2$, because
	in that case we get a regular polyhedron (by \pref{gp-is-reg}(b)). So let us suppose that $k_1 = 0$
	and $k_2 = 1$; the other case is analogous. Since $\langle \s_2^{2^{\beta-1}} \rangle$
	is normal and $\s_2^{2^{\beta-1}}$ has order 2, it follows that $\s_2^{2^{\beta-1}}$
	is central. Now, we note that:
	\[ \s_2^{-1} \s_1^{i+1} = \s_1^i \s_2^j \s_1^i = \s_1^{i+1} \s_2^{-1} \s_2^{2^{\beta-1}}, \]
	and thus
	\[ \s_2^{-1} \s_1^{2i+2} \s_2 = \s_1^{2i+2} \s_2^{2^{\beta}} = \s_1^{2i+2}. \]
	Since $\calP$ must have $\langle \s_1 \rangle$ core-free (by \cref{s1-core-free}), 
	it follows that $2^{\alpha}$ divides $2i+2$, so that $i \equiv -1$ (mod $2^{\alpha-1}$).
	Furthermore, if $i = -1$, then the first relation would give us $\s_2^{-1} = \s_2^{-1} \s_2^{2^{\beta-1}}$,
	which would force $\s_2$ to have order $2^{\beta-1}$ instead of $2^{\beta}$. So we must have
	$i = 2^{\alpha-1} - 1$. Then it follows from the dual of \pref{reg-core-free} that $q' = 4$,
	and then \tref{atomic-gp} tells us
	that $a \neq 1$ and $a^2 \equiv 1$ (mod $2^{\beta-2}$). 
	\end{proof}
	
	\begin{theorem}
	\label{thm:even-atomic-structure}
	Let $\calP$ be an atomic chiral polyhedron of type $\{2^{\alpha}, 2^{\beta}\}$ with $\beta > \alpha$.
	Then $\calP$ is either of type $\{8, 2^{\beta}\}$ with $\beta \geq 5$ and 
	\[ \G^+(\calP) = \GP{8, 2^{\beta}}{3, 1 \pm 2^{\beta-2}, -3, -1 \pm 2^{\beta-2}}, \]
	or $\calP$ is of type $\{2^{\alpha}, 2^{\alpha+1}\}$ with $\alpha \geq 4$ and
	\[ \G^+(\calP) = \GP{2^{\alpha}, 2^{\alpha+1}}{-1 + 2^{\alpha-1}, -3 \pm 2^{\alpha-1}, 1 + 2^{\alpha-1}, 3 \pm 2^{\alpha-1}}. \]
	\end{theorem}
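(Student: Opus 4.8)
The plan is to take \lref{pow-of-2-atomics} as the starting point: it already tells us that the core of $\langle\s_2\rangle$ is $\langle\s_2^4\rangle$ and that $\G^+(\calP)=\GP{2^\alpha,2^\beta}{-1+2^{\alpha-1},1+4k,1+2^{\alpha-1},-1-4ak}$ for some integers $k$ and $a$ with $a\neq 1$ and $a^2\equiv 1$ (mod $2^{\beta-2}$); moreover $\alpha\geq 3$, since the core index $4$ is a proper divisor of $p=2^\alpha$. Everything that follows is a matter of determining the admissible values of $\alpha$, $\beta$, $k$, and $a$ and then reading off the presentation.

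The first step is to relate $k$, $a$, and $\beta$. The subgroup $\langle\s_2^{2^{\beta-1}}\rangle$ has order $2$, hence is central and normal, and since $\calP$ is atomic its quotient is a tight orientably regular polyhedron of type $\{2^\alpha,2^{\beta-1}\}$; by \pref{gp-is-reg}(a) this forces $4k(a-1)\equiv 0$ (mod $2^{\beta-1}$). On the other hand, since $i_1+i_2=2^\alpha\equiv 0$ (mod $2^\alpha$) already, \pref{gp-is-reg}(b) says that chirality of $\calP$ requires $4k(a-1)\not\equiv 0$ (mod $2^\beta$). Writing $v_2$ for the $2$-adic valuation, the two together give $v_2(k)+v_2(a-1)=\beta-3$. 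The nontrivial solutions of $a^2\equiv 1$ (mod $2^{\beta-2}$) have $v_2(a-1)\in\{1,\beta-3\}$ (which already needs $\beta\geq 5$; the leftover value $\beta=4$ would force type $\{8,16\}$ by the next paragraph, and is ruled out by a direct computation showing $\s_2$ cannot reach order $16$ there). Hence either $v_2(k)=\beta-4$ or $v_2(k)=0$.

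If $v_2(k)=\beta-4$, then $4k\equiv\pm 2^{\beta-2}$ (mod $2^\beta$), so $j_1=1\pm 2^{\beta-2}$; and since $v_2(16k)=\beta$, applying \lref{special-rel} with $n=4$ gives $\s_2\s_1^8\s_2^{-1}=\s_1^{4(i-1)}$, so $\langle\s_1^8\rangle$ is normal, and therefore trivial because $\langle\s_1\rangle$ is core-free (\cref{s1-core-free}); thus $2^\alpha\mid 8$ and $\alpha=3$. Unwinding the presentation, $\G^+(\calP)=\GP{8,2^\beta}{3,1\pm 2^{\beta-2},-3,-1\pm 2^{\beta-2}}$ with $\beta\geq 5$, which is the first family. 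If instead $v_2(k)=0$, then \lref{special-rel} with $n=2^{\beta-3}$ gives $\s_2\s_1^{2^{\beta-2}}\s_2^{-1}=\s_1^{-2^{\beta-2}+2^{\alpha+\beta-4}}\s_2^{2^{\beta-1}}$; if $\beta\geq\alpha+2$ then all the powers of $\s_1$ are trivial and we would get $\s_2^{2^{\beta-1}}=1$, contradicting that $\s_2$ has order $2^\beta$. So $\beta=\alpha+1$, the constraint $v_2(a-1)=\beta-3=\alpha-2$ forces $a\equiv 1+2^{\alpha-2}$ (mod $2^{\alpha-1}$) and hence $j_2=-1-4k-2^\alpha$, and since $v_2(a-1)=\alpha-2\geq 2$ we get $\alpha\geq 4$.

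The one remaining---and hardest---point is to pin down $k$ modulo $2^{\alpha-1}$ in this last case: one must show $v_2(k+1)=\alpha-3$, equivalently $j_1=1+4k\equiv -3\pm 2^{\alpha-1}$ (mod $2^{\alpha+1}$), so that the presentation collapses to exactly $\GP{2^\alpha,2^{\alpha+1}}{-1+2^{\alpha-1},-3\pm 2^{\alpha-1},1+2^{\alpha-1},3\pm 2^{\alpha-1}}$. None of the soft tools (regular quotients, core-freeness, the relation $(\s_1\s_2)^2=\eps$) seem to detect this, so I expect the proof to compute the order of $\s_2$ in the abstract group directly---by working out the normal form $\s_1^x\s_2^y$ for products and tracking how the exponent of $\s_2$ evolves---and to show that it drops below $2^{\alpha+1}$ unless $v_2(k+1)=\alpha-3$; this is the same routine-but-fiddly (essentially coset or permutation) computation used in the proofs of \tref{odd-atomics} and \tref{even-atomics2}, and the same kind of check disposes of the boundary case $\beta=4$. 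Finally, one observes that the two families so obtained are exactly those realized by \tref{even-atomics} and \tref{even-atomics2}.
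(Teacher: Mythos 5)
Your reduction of \lref{pow-of-2-atomics} to two cases via the valuation identity $v_2(k)+v_2(a-1)=\beta-3$ is correct and in fact tidies up the paper's case split on $a\equiv\pm1\pmod{q/8}$; the treatment of the first family ($v_2(k)=\beta-4$, forcing $p=8$ via \lref{special-rel} with $n=4$ and core-freeness of $\langle\s_1\rangle$) and the derivation of $\beta=\alpha+1$ in the second case are sound and essentially parallel to the paper's argument (the paper takes $n=p/2$ rather than $n=2^{\beta-3}$ in \lref{special-rel}, to the same effect). But the step you yourself single out as the hardest one --- pinning $1+4k$ down to $-3\pm2^{\alpha-1}$ modulo $2^{\alpha+1}$ --- is a genuine gap: you state the target, conjecture that it requires a brute-force computation of the order of $\s_2$, and never carry that computation out. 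Moreover your guess about what is needed is wrong: the ``soft tools'' you dismiss do suffice. The paper first notes that $\calP$ covers a tight orientably regular polyhedron $\calQ$ of type $\{2^{\alpha},2^{\alpha}\}$ and applies \pref{reg-core-free} to the quotient of $\G^+(\calQ)$ by the core of $\langle\s_1\rangle$; since $k$ is odd, the first two groups on that list are impossible, which forces $1+4k\equiv-3\pmod{2^{\alpha-1}}$. That leaves four residues modulo $2^{\alpha+1}$, and the two bad ones, $-3$ and $-3+2^{\alpha}$, are eliminated by a short computation inside $\G^+(\calP)$: either of them implies that $\s_1^2$ and $\s_2^2$ commute, whence $\s_1\s_2^4\s_1^{-1}=(\s_1\s_2^2\s_1^{-1})^2=\s_2^{-4-8k}=\s_2^4$, contradicting $\s_1\s_2^4\s_1^{-1}=\s_2^{4a}=\s_2^{4+2^{\alpha}}$.

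A secondary gap of the same kind: the exclusion of $\beta=4$ (type $\{8,16\}$) is asserted to follow from ``a direct computation'' that is not given; the paper disposes of all $\beta\le4$ by citing the census of chiral polytopes with at most 2000 flags. Neither omission is a wrong turn, but as written the proposal leaves unproved exactly the two facts that require real work.
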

	
	\begin{proof}
	Let $p = 2^{\alpha}$ and $q = 2^{\beta}$. Let $\langle \s_2^{q'} \rangle$ be the core of
	$\langle \s_2 \rangle$ in $\G^+(\calP)$. Using \cite{chiral-atlas}, we can verify that
	there are no tight chiral polyhedra of type $\{2^{\alpha}, 2^{\beta}\}$ with $\beta \leq 4$,
	so $\beta \geq 5$.
	
	\lref{pow-of-2-atomics} tells us that $q' = 4$ and that
	$\G^+(\calP) = \GP{p, q}{-1 + \frac{p}{2}, 1+4k, 1 + \frac{p}{2}, -1 - 4ak}$ for
	some $k$ and some $a \neq 1$ satisfying $a^2 \equiv 1$ (mod $q/4$).
	Since $q$ is a power of 2, this implies that $a \equiv \pm 1$ (mod $q/8$).

	First, suppose that $a \equiv -1$ (mod $q/8$). Since $\langle \s_2^4 \rangle$ is normal,
	so is $\langle \s_2^{q/2} \rangle$, and since $\calP$ is atomic, the quotient by this
	normal subgroup is the rotation group of a regular polyhedron. By \pref{gp-is-reg}(a),
	this implies that $1 + 4k \equiv 1 + 4ak$ (mod $q/2$). Then $k \equiv ak$ (mod $q/8$),
	and so $k \equiv -k$ (mod $q/8$). Therefore, $q/16$ divides $k$, and since $q \geq 32$,
	it follows that $k$ is even. Now, substituting $n = 4$ in \lref{special-rel} yields that
	$\s_2 \s_1^8 \s_2^{-1} = \s_1^{4(i-1)} \s_2^{16k} = \s_1^{4(i-1)}$. Since
	$\langle \s_1 \rangle$ is core-free, the subgroup $\langle \s_1^8 \rangle$
	must be trivial, and so $p$ divides $8$. Furthermore, since $q'$ = 4 and
	$q'$ must be less than $p$ (see \pref{normal-props}), it follows that $p = 8$. 
	
	It remains to determine the possible values
	for $4k$ when $p = 8$. We found earlier that $4k$ must be a multiple of $q/4$, and by
	\pref{gp-is-reg}(b), we need $1 + 4k \not \equiv 1 + 4ak$ (mod $q$).
	That rules out $4k = 0$ and $4k = q/2$, and so we must have either
	$4k = q/4$ or $4k = 3q/4$; this gives us the two groups in \tref{even-atomics}.
	
	Now we try the case where $a \equiv 1$ (mod $q/8$). Note that the choice of $a$ only matters modulo
	$q/4$, and since $a \neq 1$ we may as well choose $a = q/8 + 1$. Again, we need $1+4k \not \equiv 1 + 4ak$
	(mod $q$). This implies that $k$ must be odd, and that $-1-4ak \equiv -1-4k+2^{\beta-1}$ (mod $q$).
	Next, using $n = p/2$ in \lref{special-rel} gets us that $\s_2^{2pk} = \eps$,
	and thus $q$ divides $2pk$. Since $k$ is odd and $q$ is a power of two,
	it follows that $q$ divides $2p$. Since
	we must have $q > p$, it follows that $q = 2p$. So $\calP$ has Schl\"afli symbol
	$\{2^{\alpha}, 2^{\alpha+1}\}$.

	To narrow down the possible values of $k$, we note that $\calP$ covers a tight
	orientably regular polyhedron $\calQ$ of type $\{2^{\alpha}, 2^{\alpha}\}$,
	and taking the quotient of $\G^+(\calQ)$ by the core of $\langle \s_1 \rangle$
	gives us one of the groups in \pref{reg-core-free}. The first two choices would
	give us $1+4k \equiv 1$ (mod $2^{\alpha-1}$), and then $k$ would be even.
	So we must have $1+4k \equiv -3$ (mod $2^{\alpha-1}$) instead.

	If $1+4k \equiv -3$ (mod $q$), then $\s_2^{-1} \s_1 = \s_1^{p/2 - 1} \s_2^{-3}$; 
	multiplying both sides on the right by $\s_2$ and using the relation $\s_1 \s_2 = \s_2^{-1} \s_1^{-1}$
	on the left yields
	$\s_2^{-2} \s_1^{-1} = \s_1^{p/2 - 1} \s_2^{-2}$. Inverting both sides yields
	$\s_1 \s_2^2 = \s_2^2 \s_1^{p/2 + 1}$. Then 
	\[ \s_1^2 \s_2^2 = \s_1 \s_2^2 \s_1^{p/2 + 1} = \s_2^2 \s_1^{2(p/2 + 1)} = \s_2^2 \s_1^2. \]
	Similarly, if $1+4k \equiv (q/2)-3$ (mod $q$), then $\s_2 \s_1^{-1} = \s_1^{p/2 + 1} \s_2^3$, from which
	it follows that $\s_2^2 \s_1 = \s_1^{p/2 + 1} \s_2^2$. Then
	\[ \s_2^2 \s_1^2 = \s_1^{p/2+1} \s_2^2 \s_1 = \s_1^2 \s_2^2. \]
	So in either of these cases, $\s_1^2$ and $\s_2^2$ commute. Then we find
	\begin{align*}
	\s_1 \s_2^4 \s_1^{-1} &= (\s_1 \s_2^2 \s_1^{-1})^2 \\
	&= (\s_2^{-1} \s_1^{-1} \s_2 \s_1^{-1})^2 \\
	&= (\s_2^{-2-4k} \s_1^{p/2})^2 \\
	&= \s_2^{-4-8k} \\
	&= \s_2^4.
	\end{align*}
	On the other hand, $\s_1 \s_2^4 \s_1^{-1} = \s_2^{4a} = \s_2^{4+q/2}$.
	This is a contradiction, and so both of these choices for $1+4k$ are invalid.
	The remaining two choices are $1+4k = -3 \pm (q/4)= -3 \pm 2^{\alpha-1}$,
	which give the groups described.
	\end{proof}

	Using the fact that every tight chiral polyhedron covers an atomic chiral polyhedron,
	Theorems~\ref{thm:odd-atomic-structure}~and~\ref{thm:even-atomic-structure} imply the following:
	
	\begin{theorem}
	\label{thm:all-atomics}
	Every tight chiral polyhedron covers an atomic chiral polyhedron of one of the following types:
	\begin{align*}
	\{2m, m^{\alpha}\} &\textrm{ or } \{m^{\alpha}, 2m\} \textrm{ for an odd prime $m$ and $\alpha \geq 2$; or} \\
	\{8, 2^{\beta}\} &\textrm{ or } \{2^{\beta}, 8\} \textrm{ with $\beta \geq 5$; or} \\
	\{2^{\alpha}, 2^{\alpha+1}\} &\textrm{ or } \{2^{\alpha+1}, 2^{\alpha}\} \textrm{ with $\alpha \geq 4$.}
	\end{align*}
	\end{theorem}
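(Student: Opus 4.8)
The plan is to assemble the theorem from two ingredients that are essentially already in hand: (i) every tight chiral polyhedron covers an atomic chiral polyhedron, and (ii) every atomic chiral polyhedron has one of the six listed Schl\"afli symbols. Ingredient (i) is immediate from the definition of \emph{atomic}: if a tight chiral polyhedron $\calP$ of type $\{p,q\}$ is not atomic, then by definition it covers a tight chiral polyhedron of type $\{p',q\}$ or $\{p,q'\}$ with $p'<p$ or $q'<q$; since covering of rotation groups is transitive and the integer $pq$ strictly decreases along such a chain, iterating this process terminates at an atomic chiral polyhedron, which $\calP$ then covers. So the whole theorem reduces to ingredient (ii), the classification of the types of atomic chiral polyhedra.

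Let $\calP$ be an atomic chiral polyhedron of type $\{p,q\}$. The first step is to rule out the possibility $p=q$. When $q\ge p$, \pref{normal-props} produces a proper nontrivial normal subgroup of the form $\langle\s_2^{q'}\rangle$, so the core of $\langle\s_2\rangle$ is nontrivial; applying \pref{normal-props} to the dual $\calP^{\delta}$ (which has type $\{q,p\}$, so that now the first parameter is $\ge$ the second) shows in the same way that the core of $\langle\s_1\rangle$ is nontrivial. This contradicts \pref{core-free}, so $p\ne q$. Consequently, by passing to the dual when necessary, it suffices to classify the atomic chiral polyhedra with $q>p$; those with $q<p$ are exactly their duals.

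Now assume $q>p$. Here \lref{possible-atomics} — which rests on \lref{q-over-qp}, \lref{special-rel}, and the structure theorem \tref{atomic-gp} — already forces the dichotomy: either $p=2m^{\alpha}$ and $q=m^{\alpha+\beta}$ for an odd prime $m$ and positive integers $\alpha,\beta$, or $p$ and $q$ are both powers of $2$. In the first case, \tref{odd-atomic-structure} pins down $\alpha=1$, so $\calP$ has type $\{2m,m^{\beta+1}\}$ with $\beta\ge 1$; relabelling the exponent, this is the family $\{2m,m^{\alpha}\}$ with $m$ an odd prime and $\alpha\ge 2$. In the second case, since $q>p$ we may write $p=2^{\alpha}$ and $q=2^{\beta}$ with $\beta>\alpha$, and \tref{even-atomic-structure} says $\calP$ is of type $\{8,2^{\beta}\}$ with $\beta\ge 5$ or of type $\{2^{\alpha},2^{\alpha+1}\}$ with $\alpha\ge 4$. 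Combining with the duals (to account for $q<p$) yields exactly the six families in the statement, completing the proof.

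The heavy lifting is all done by the structure results of the preceding subsection, so this theorem is really a matter of packaging. The one point I would treat with care is the equality case $p=q$: the classification theorems all carry the running hypothesis $q>p$ (with $q<p$ handled purely by duality), so the case $p=q$ genuinely requires the short separate argument above rather than being subsumed. I would also double-check that the parameter constraints emerging from \tref{odd-atomic-structure} and \tref{even-atomic-structure} really do match the bounds $\alpha\ge 2$, $\beta\ge 5$, and $\alpha\ge 4$ claimed in the statement.
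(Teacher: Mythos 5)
Your proposal is correct and follows essentially the same route as the paper, which derives the theorem directly from the fact that every tight chiral polyhedron covers an atomic one together with Lemma~\ref{lem:possible-atomics} and Theorems~\ref{thm:odd-atomic-structure} and~\ref{thm:even-atomic-structure}. Your explicit exclusion of the case $p=q$ via Propositions~\ref{prop:normal-props} and~\ref{prop:core-free} is a worthwhile detail that the paper leaves implicit in its standing assumption $q>p$ (dualizing for $q<p$).
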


\section{Schl\"afli symbols of tight chiral polyhedra}

	We are almost ready to fully characterize the Schl\"afli symbols of tight chiral polyhedra.
	We need one more key result:

	\begin{theorem}
	\label{thm:p-div-2q}
	Let $\calP$ be a tight chiral polyhedron of type $\{p, q\}$, with $q$ odd. Then $p$ is an even divisor of $2q$.
	Furthermore, if $\langle \s_2 \rangle$ is core-free, then $p = 2q$.
	\end{theorem}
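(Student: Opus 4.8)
The plan is to argue in four stages: $p$ is even; the $2$-part of $p$ equals $2$; the divisibility $p\mid 2q$ reduces to the case where $\langle\s_2\rangle$ is core-free; and in that case $p=2q$. For the first two stages: if $p$ were odd then $|\G^+(\calP)|=pq$ would be odd, so the element $\s_1\s_2$, which satisfies $(\s_1\s_2)^2=\eps$, would be trivial, forcing $\s_2=\s_1^{-1}$ and hence $\G^+(\calP)=\langle\s_1\rangle$ abelian --- impossible for the rotation group of a chiral polyhedron; so $p$ is even. Since $q$ is odd, the $2$-part of $|\G^+(\calP)|$ is the $2$-part of $p$, and $\langle\s_1\rangle$ already contains a cyclic subgroup of that order, so $\G:=\G^+(\calP)$ has a cyclic Sylow $2$-subgroup and therefore (Burnside's normal $p$-complement theorem) a normal $2$-complement $K$. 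In $\G/K$, a cyclic $2$-group generated by the image of $\s_1$, the image of $\s_2$ is trivial (it has odd order), so $(\s_1\s_2)^2=\eps$ forces the square of the generator of $\G/K$ to be trivial; with $p$ even this gives $|\G/K|=2$, i.e. $p=2p'$ with $p'$ odd. Note $K$ has odd order $p'q$ and, since $\calP$ is tight, $K=\langle\s_1^2\rangle\langle\s_2\rangle$ with $\langle\s_1^2\rangle$ cyclic of order $p'$ and intersecting $\langle\s_2\rangle$ trivially.

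Next I would reduce $p\mid 2q$ to the core-free situation. Let $C=\langle\s_2^{q^*}\rangle$ be the core of $\langle\s_2\rangle$ in $\G$ (with $q^*\mid q$). By \pref{inverse-quo-crit} (note $p\ge 3$ since $\calP$ is chiral), $\G/C$ is the rotation group of a tight chiral or orientably regular polyhedron $\calP^*$ of type $\{p,q^*\}$ with $q^*$ odd and with $\langle\s_2\rangle$ core-free in $\G/C$. If $\calP^*$ is orientably regular, the classification of tight orientably regular polyhedra (\cite{tight2}; cf. \pref{regular-gp1}) gives $p\mid 2q^*$; if $\calP^*$ is chiral, the last stage below gives $p=2q^*$. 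Either way $p\mid 2q^*\mid 2q$.

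So assume now $\calP$ is tight chiral of type $\{p,q\}$ with $q$ odd and $\langle\s_2\rangle$ core-free; the goal is $p=2q$. First, \pref{normal-props} forces $q<p$: were $q\ge p$, it would produce an integer $q'$ with $2\le q'<p$ and $\langle\s_2^{q'}\rangle$ normal, but core-freeness forces $\langle\s_2^{q'}\rangle=\{\eps\}$, i.e. $q'=q\ge p$, a contradiction. With $p=2p'$ from the second stage, it therefore suffices to prove $p'\mid q$: then $q$ is a positive multiple of $p'$ with $q<p=2p'$, so $q=p'$ and $p=2q$. For $p'\mid q$ I would use the faithful action of $\G$ on the $p$ cosets of $\langle\s_2\rangle$, in which $\s_1$ acts as a $p$-cycle. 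The normal $2$-complement $K$ has index $2$ and contains $\langle\s_1^2\rangle$, whose orbits are the even- and odd-indexed cosets; since a transitive $K$-action on all $p$ cosets would force $|K|=pq$ rather than $p'q$, $K$ has exactly these two orbits, acting transitively on each set of $p'$ cosets with point-stabilizer a conjugate of $\langle\s_2\rangle$ and with $\langle\s_1^2\rangle$ acting regularly. Thus $K$ is an odd-order (hence solvable) transitive group of degree $p'$ carrying a regular cyclic subgroup, and since chirality prevents $\langle\s_1\rangle$ (hence $\langle\s_1^2\rangle$) from being normal in $\G$, an analysis of this structure yields $p'\mid q$. (Alternatively, one can invoke \tref{all-atomics}: since $q$ is odd, $\calP$ covers an atomic chiral polyhedron of type $\{2m,m^{\alpha}\}$ for an odd prime $m$ and $\alpha\ge 2$, and one can bootstrap from this to control the prime-power structure of $p$ relative to $q$.)

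The step I expect to be the main obstacle is precisely this last one --- obtaining $p'\mid q$ in the core-free case. The relations $\s_1^p=\s_2^q=(\s_1\s_2)^2=\eps$ by themselves do not determine enough of the internal structure of $K$, so one is forced either into a careful analysis of solvable transitive groups carrying a regular cyclic subgroup, or into an appeal to the classification of atomic chiral polyhedra established in Section 4.
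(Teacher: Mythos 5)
Your preliminary stages are correct and some are genuinely nice: the observation that $\G^+(\calP)$ has a cyclic Sylow $2$-subgroup, hence by Burnside a normal $2$-complement $K$ of index exactly $2$ (so that $p=2p'$ with $p'$ odd), does not appear in the paper and is a clean way to pin down the $2$-part of $p$. The reduction to the core-free case via the quotient by the core of $\langle\s_2\rangle$, and the deduction $q<p$ from \pref{normal-props} under core-freeness, are also sound. But all of this is the easy periphery of the theorem. The entire content is the divisibility $p\mid 2q$ --- equivalently, in your setup, $p'\mid q$ in the core-free case --- and that step is not proved. You identify it yourself as the main obstacle, and neither of the two escape routes you sketch closes it. The abstract structure you extract ($K$ an odd-order transitive group of degree $p'$ containing a regular cyclic subgroup, with point stabilizers conjugate to $\langle\s_2\rangle$) does not by itself force $p'\mid q$; nothing in that description uses the relation $(\s_1\s_2)^2=\eps$ or tightness beyond counting, and it is exactly those relations that do the work. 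The appeal to \tref{all-atomics} is also insufficient: covering an atomic polyhedron of type $\{2m,m^{\alpha}\}$ only tells you that $2m\mid p$ and $m^{\alpha}\mid q$ for a single odd prime $m$; it says nothing about the other prime factors of $p$ and does not rule out, say, a tight chiral polyhedron of type $\{2\cdot 3\cdot 5,\,9\}$.

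For comparison, the paper closes this gap with an induction on the number of quotient steps needed to reach a regular polyhedron. In the case $p\geq q$ it chooses $p'$ maximal so that $\langle\s_1^{p'}\rangle$ is normal and $\langle\ol{\s_2}\rangle$ still has nontrivial core in the quotient, extracts the relation $\s_1^{-1}\s_2^{q'}\s_1=\s_1^{ap'}\s_2^{bq'}$ with $a$ coprime to $p/p'$ and $\s_1^{p'}$ central, and iterates it to get $\s_1^{ap'q/q'}=\eps$, whence $p/p'$ divides $q/q'$; the inductive hypothesis (or the regular classification) applied to the core-free quotient of type $\{p',q'\}$ gives $p'=2q'$, and these combine to $p\mid 2q$. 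The final assertion $p=2q$ under core-freeness then comes from running the same relation in the other direction to get $q/q'$ dividing $p/p'$. Some argument of this kind --- one that actually exploits $(\s_1\s_2)^2=\eps$ and the tight factorization to compare $p/p'$ with $q/q'$ --- is what your proposal is missing.
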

	
	\begin{proof}
	First, note that a tight polyhedron $\calP$ of type $\{p, q\}$ with $q$ odd must have $p$ even,
	because $\calP$ has $2pq$ flags, and the number of flags is four times the number of edges.
	
	Now, the claim is true for atomic chiral polyhedra, by \lref{possible-atomics}.
	To prove the general case, we will essentially use induction on the maximal number
	of steps $N$ it takes to pass from $\calP$ to a regular polyhedron via quotients.
	The atomic chiral polyhedra cover the base case $N = 1$. Since $N$ strictly decreases
	as we take quotients, we may assume by inductive hypothesis that any
	quotient of $\calP$ satisfies the claim.
	
	If $q \geq p$, then \pref{normal-props} says that $\calP$ covers a tight chiral or 
	orientably regular polyhedron of type $\{p, q'\}$, with $q'$ odd and $q'$ dividing $q$.
	By inductive hypothesis, $p$ divides $2q'$, which divides $2q$, so the claim is true in this case.
	
	Next, suppose that $p \geq q$. Let $p'$ be the largest divisor of $p$ such that $\langle \s_1^{p'} \rangle$
	is normal in $\G^+(\calP)$ and such that $\langle \ol{\s_2} \rangle$ has nontrivial core
	in $\G^+(\calP) / \langle \s_1^{p'} \rangle$, where $\ol{\s_1}$ and $\ol{\s_2}$ are the
	images of $\s_1$ and $\s_2$. Let $\langle \ol{\s_2}^{q'} \rangle$ be the core of 
	$\langle \ol{\s_2} \rangle$ in $\G^+(\calP) / \langle \s_1^{p'} \rangle$. So
	$\ol{\s_1}^{-1} \ol{\s_2}^{q'} \ol{\s_1} = \ol{\s_2}^{bq'}$ for some $b$, and thus
	for some $a$ the relation $\s_1^{-1} \s_2^{q'} \s_1 = \s_1^{ap'} \s_2^{bq'}$ holds
	in $\G^+(\calP)$. Therefore, if we take the quotient of $\G^+(\calP)$ by
	$\langle \s_1^{ap'} \rangle$, then the image of $\langle \s_2^{q'} \rangle$
	is normal in the quotient. By our choice of $p'$, it follows that
	$a$ is coprime to $p/p'$.

	Now, $\s_1^{p'}$ is central, by (the dual version of) \pref{central-elt}. Then the relation
	$\s_1^{-1} \s_2^{q'} \s_1 = \s_1^{ap'} \s_2^{bq'}$ implies that, for each $n$,
	\begin{equation}
	\label{eq:special-rel}
	\s_1^{-1} \s_2^{nq'} \s_1 = \s_1^{nap'} \s_2^{nbq'}.
	\end{equation}
	Taking $n = q/q'$ yields $\s_1^{ap'q/q'} = \eps$, and thus
	$p$ divides $ap'q/q'$. Therefore, $p/p'$ divides $aq/q'$,
	and since $a$ is coprime to $p/p'$, it follows that $p/p'$ divides $q/q'$.
	
	Now, in $\G^+(\calP) / \langle \s_1^{p'} \rangle$, the subgroup $\langle \ol{\s_2} \rangle$ has
	nontrivial core $\langle \ol{\s_2}^{q'} \rangle$, and taking the quotient by this normal
	subgroup yields the rotation group of a tight chiral or orientably regular polyhedron
	$\calQ$ of type $\{p', q'\}$. Furthermore, $q'$ is odd, and the image of $\langle \ol{\s_2} \rangle$
	is core-free. If $\calQ$ is chiral, then $p' = 2q'$ by inductive hypothesis, whereas if
	$\calQ$ is regular, then $p' = 2q'$ by \cite[Prop. 4.11]{tight3}.
	Combining this with the fact that $p/p'$ divides $q/q'$, we get that $p$ divides $2q$.

	Finally, suppose that $\langle \s_2 \rangle$ is core-free.
	Taking $n = p/p'$ in \eref{special-rel} yields that $\langle \s_2^{pq'/p'} \rangle$
	is normal in $\G^+(\calP)$. Since $\langle \s_2 \rangle$ is core-free,
	it follows that $q$ divides $pq'/p'$, and so $q/q'$ divides $p/p'$.
	Since we also have that $p/p'$ divides $q/q'$, it follows that $q/q' = p/p'$,
	and combining again with the fact that $p' = 2q'$ we get that $p = 2q$.
	\end{proof}
	
	\begin{theorem}
	\label{thm:all-tight-chirals}
	There is a tight chiral polyhedron of type $\{p, q\}$ if and only if one of
	the following is true:
	\begin{enumerate}
	\item[(1)] $q$ is odd, $p$ is an even divisor of $2q$, and there is an odd prime $m$ such that
	$m$ divides $p$ and $m^2$ divides $q$.
	\item[(2)] $p$ is odd, $q$ is an even divisor of $2p$, and there is an odd prime $m$ such that
	$m$ divides $q$ and $m^2$ divides $p$.
	\item[(3)] $p$ and $q$ are both even, and there is an odd prime $m$ such that
	$m$ divides $p$ and $m^2$ divides $q$.
	\item[(4)] $p$ and $q$ are both even, and there is an odd prime $m$ such that
	$m$ divides $q$ and $m^2$ divides $p$.
	\item[(5)] $p$ is divisible by 8 and $q$ is divisible by 32.
	\item[(6)] $q$ is divisible by 8 and $p$ is divisible by 32.
	\end{enumerate}
	\end{theorem}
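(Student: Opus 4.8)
The plan is to prove the two implications separately, the ``if'' direction being essentially a restatement of \tref{existence} and the ``only if'' direction being an assembly of \tref{all-atomics} and \tref{p-div-2q}.

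For the ``if'' direction: conditions (1), (3), and (5) are literally conditions (a), (b), and (c) of \tref{existence}, each of which already guarantees a tight chiral polyhedron of type $\{p,q\}$. For conditions (2), (4), and (6), interchanging the names of $p$ and $q$ turns them into conditions (1), (3), and (5) for the pair $\{q,p\}$; since \tref{existence} also asserts the existence of a tight chiral polyhedron of type $\{q,p\}$ whenever its hypotheses hold for $\{p,q\}$, we obtain a tight chiral polyhedron of type $\{p,q\}$ in each of these cases as well.

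For the ``only if'' direction: suppose $\calP$ is a tight chiral polyhedron of type $\{p,q\}$. By \tref{all-atomics}, $\calP$ covers an atomic chiral polyhedron $\calR$ whose Schl\"afli symbol $\{p_0,q_0\}$ appears on the list there. The first step is to observe that covering forces divisibility of the Schl\"afli parameters: the covering homomorphism $\G^+(\calP) \to \G^+(\calR)$ sends $\s_1 \mapsto \s_1$ and $\s_2 \mapsto \s_2$, so the relations $\s_1^p = \s_2^q = \eps$ hold in $\G^+(\calR)$, which forces $p_0 \mid p$ and $q_0 \mid q$. Next I would reduce by duality: $\calP^{\delta}$ is a tight chiral polyhedron of type $\{q,p\}$, it covers $\calR^{\delta}$, and $\calR^{\delta}$ is again atomic (duality preserves tightness, chirality, and covering relations, while interchanging the two Schl\"afli entries). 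So it suffices to treat the three ``primary'' types of $\calR$, namely $\{2m, m^{\alpha}\}$ with $m$ an odd prime and $\alpha \geq 2$, $\{8, 2^{\beta}\}$ with $\beta \geq 5$, and $\{2^{\alpha}, 2^{\alpha+1}\}$ with $\alpha \geq 4$; the remaining three types yield conditions (2), (4), and (6) by applying the same reasoning to $\calP^{\delta}$. When $\calR$ has type $\{2m, m^{\alpha}\}$, from $2m \mid p$ and $m^{\alpha} \mid q$ we read off that $p$ is even, $m \mid p$, and $m^2 \mid q$; if $q$ is odd then \tref{p-div-2q} gives that $p$ is an even divisor of $2q$, yielding condition (1), and if $q$ is even then both $p$ and $q$ are even, yielding condition (3). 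When $\calR$ has type $\{8, 2^{\beta}\}$ with $\beta \geq 5$ we get $8 \mid p$ and $32 \mid q$, and when $\calR$ has type $\{2^{\alpha}, 2^{\alpha+1}\}$ with $\alpha \geq 4$ we get $8 \mid p$ and $32 \mid q$ as well; in both cases this is condition (5).

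The main obstacle has already been absorbed into the earlier sections: \tref{all-atomics} (the structure theory of atomic chiral polyhedra) and \tref{p-div-2q} (the divisor relation $p \mid 2q$ when $q$ is odd) are doing all the heavy lifting, so this theorem is mostly bookkeeping. The only points that need genuine care are verifying that $\calR^{\delta}$ really is atomic — so that the dual reduction is legitimate — and the parity case split on $q$ in the odd-prime case, which is precisely where \tref{p-div-2q} is invoked to supply the clause ``$p$ is an even divisor of $2q$'' in condition (1).
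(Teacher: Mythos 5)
Your proposal is correct and follows the same route as the paper: sufficiency from \tref{existence} (with duality supplying the swapped conditions), and necessity by combining the divisibility forced by covering an atomic polyhedron from \tref{all-atomics} with \tref{p-div-2q} for the odd-entry case. Your write-up just makes explicit the bookkeeping (the divisibility $p_0 \mid p$, $q_0 \mid q$ from the covering and the duality reduction) that the paper's two-sentence proof leaves implicit.
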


	\begin{proof}
	\tref{existence} proves that these conditions suffice. To prove necessity, we
	note that \tref{all-atomics} implies that either there is an odd prime $m$
	such that $m$ divides $p$ and $m^2$ divides $q$ (or vice-versa), or that
	$8$ divides $p$ and $32$ divides $q$ (or vice-versa). Combining with
	\tref{p-div-2q} to handle the case where $p$ or $q$ is odd completes the proof.
	\end{proof}
	
\section{Conclusions and future directions}

	In this paper, we have determined the Schl\"afli symbols that occur among tight chiral
	polyhedra. A natural next step would be to obtain a complete classification of the
	automorphism groups of tight chiral polyhedra. The work here with atomic chiral
	polyhedra is already a step in that direction.
	
	Another natural direction for expansion would be the classification of tight
	chiral polytopes of higher dimensions. Work on this problem has already
	begun, including a proof that there are no tight chiral $n$-polytopes
	for $n \geq 6$. Many families of tight chiral $4$-polytopes have been found,
	but so far, no tight chiral $5$-polytopes have been found.

\section{Acknowledgement}

	The author would like to thank Daniel Pellicer for helpful discussions on this topic,
	and the anonymous referees for a number of good suggestions.
	
\bibliographystyle{amsplain}
\bibliography{gabe}

\end{document}